\newtheorem{theorem}{Theorem}[section] 
\newtheorem{lemma}[theorem]{Lemma}
\newtheorem{coro}[theorem]{Corollary}
\newtheorem{prop}[theorem]{Proposition}
\newtheorem{remark}{Remark}[section] 
\newcommand{\sech}{\,\text{sech}\, }
\newcommand{\R}{\mathbb{R}}
\title[Inflaton and CDM decay]{A proof of slow-roll local decay of inflaton fields in cosmology and axion fields in cold dark matter models}
\author[Mat\'ias Morales]{Mat\'ias Morales}
\address{Departamento de Ingenier\'{\i}a Matem\'atica, Universidad de Chile, Casilla
170 Correo 3, Santiago, Chile.}
\email{mbmorales@dim.uchile.cl}
	\thanks{M.M. was partially funded by Chilean research grants FONDECYT 1191412 and 1231250, and Exploration ANID 13220060.}
\author[Claudio Mu\~noz]{Claudio Mu\~noz}  
\address{Departamento de Ingenier\'{\i}a Matem\'atica and Centro
de Modelamiento Matem\'atico (UMI 2807 CNRS), Universidad de Chile, Casilla
170 Correo 3, Santiago, Chile.}
\email{cmunoz@dim.uchile.cl}
\thanks{C.M. was partially funded by Chilean research grants FONDECYT 1191412, 1231250, and Basal CMM FB210005, and Exploration ANID 13220060.}
\keywords{Cosmological Inflation, Axion, Cold Dark Matter, local decay}
\numberwithin{equation}{section}
\begin{document}

\maketitle

\begin{abstract}
We consider the long time behavior of solutions to scalar field models appearing in the theory of cosmological inflation, oscillons and cold dark matter, in presence or absence of the cosmological constant. These models are not included in standard mathematical literature due to their unusual nonlinearities, which model different features with respect to classical fields. Here we prove that these models fit in the theory of dispersive decay by computing new virials adapted to their setting. Several important examples, candidates to model both effects are studied in detail.   
\end{abstract}


\section{Introduction}


This paper is concerned with the long time behavior of solutions to some cosmological inflationary and Cold Dark Matter (CDM) models.

\subsection{The theory of Inflation}  Hubble \cite{Hu}, based on the observation of local galaxies, observed that the universe expands and proposed the famous formula $v=H_0 d$, where $v$ is the radial velocity of nearby galaxies and $d$ its distance. The parameter $H_0$, today known as the Hubble's constant (or parameter) describes the expansion of our universe. A posterior analysis, with the discovery of the Cosmic Microwave Background (CMB) \cite{PW}, a vestige of the decoupling between matter and radiation, confirmed the idea of a primordial universe more compact than the actual one. A classical assumption is that our large-scale universe is governed by the cosmological principle, with a Universe homogeneous and isotropic. In this setting, the Friedmann-Lemaitre-Robertson-Walker (FLRW) metric
\begin{equation}\label{FLRW}
ds^2 = -dt^2 +a^2(t) \left( \frac{dr^2}{1-Kr^2} +r^2 d\Omega^2\right)
\end{equation}
 is commonly used to describe the Universe at great scales and it is one of the basis of the current Big-Bang theory, usually referred as the $\Lambda$CDM cosmological model. The other key component of the model is the CDM theory. 

\medskip

A key component of the FLRW metric  \eqref{FLRW} is the \emph{scale factor}, usually denoted as $a(t)$, which measures either the expansion or contraction of the Universe with respect to a time scale variable. The parameter $K$ measures the curvature of the universe, being $K=0$ a flat spacetime. Assuming this as a perfect fluid leads to the Friedmann's equations, one has an energy momentum tensor
\[
T_{\mu\nu} = (\rho +p) u_{\mu}u_{\nu} +p g_{\mu \nu}, \quad p= \hbox{pressure}, \, \rho=\hbox{density}, 
\]
that through Einstein's field equations couples the scalar factor with the content of the universe, in the sense that 
\[
\dot \rho + 3H (\rho + p)=0, \quad H(t):= \frac{\dot a(t)}{a(t)}.
\]
Usually, and also in this paper, we shall assume that $H$ is constant. The precise value of today's Hubble parameter (and the consequent equipartition of mass-energy of the universe) is matter of a hot controversy \cite{H01,H02}, between essentially two methods of measuring  $H$ (among other important observables) that differ in their outputs: measures using the local distance ladder and those  inferred using the CMB and galaxy surveys.  Density $\rho$ in the Universe is a tough question. Precisely, this last ingredient is another issue in Cosmology, see Subsection \ref{ss1p2}.

\medskip

However, the Big-Bang theory as it is does not explain several puzzling observations of our current universe. These are the \emph{horizon, the flatness, and the initial conditions} problems. The horizon problem refers to the impressive homogeneity of our universe, given the lack of causal connection among extreme sections of it. The flatness problem corresponds to the extreme flatness of the Universe today, given the fact that expansion and time evolution should make our universe even flatter. Finally, the Big-Bang model does not explain the initial conditions required to fulfill today's universe. Precisely, the theory of inflation was introduced to repair these unsolved issues of the model, and solves with great success the two first problems. For the third one must consider perturbations of inflation.  

\medskip

Inflation is essentially a phenomenological theory and it is modeled via a classical scalar field coupled to gravitation, usually called \emph{inflaton}. Since the period of inflation is the primordial Universe, perturbations of the inflaton potential must also consider quantum effects, random perturbations of gaussian type that recover with precision the observed data. The theory of cosmological inflation was first introduced by Guth \cite{Guth} and Starovinski \cite{Starovinski}. The inflation field is characterized by a nonstandard well potential, and it is suggested that it may be not related to the classical $\phi^4$ model present in the Standard Model of particle Physics. Instead of tunneling out of a false vacuum state, inflation occurred by the scalar field rolling down a potential energy hill. When the field rolls very slowly compared to the expansion of the Universe, inflation occurs. However, when the hill becomes steeper, inflation ends and reheating can occur.

\medskip

Since it is still an unproved theory, many potentials have been proposed to describe this theory and its perturbations, some of them with better chances due to current experimental observations of the CMB. Among these experimental projects, Planck \cite{Planck2018} has worked analyzing statistically anisotropies of the CMB to understand, among other questions, which of these models could give a better description of our universe.

\subsection{A theory for Cold Dark Matter}\label{ss1p2} The composition of the Universe is another puzzling problem in Cosmology, and it is deeply related to the previous discussion of inflation. Today is widely recognized that most of our universe is composed of radiation, baryonic matter, dark matter and dark energy. The last two are essentially not well-understood: dark matter was proposed to explain anomalous rotation speeds in galaxies, and dark energy seeks to explain the acceleration of the universe. Dark matter is detected only through its gravitational interactions with ordinary matter and radiation. 

\medskip

There are several theories that proposes to explain dark matter. The primary candidate for dark matter is some new kind of elementary particle that has not yet been discovered, particularly weakly interacting massive particles (WIMPs).  One can also find Axions \cite{SW} and Primordial Black Holes (PBH) \cite{Kawa}. The latter are hypothesized compact objects of the early universe formed by strong curvature deviations and not by accumulation of mass as astrophysical black holes. They have been under review because recent detections of gravitational waves by LIGO interferometers showed unusual ranges of masses. It is also believed that PBH might have important implications in quantum perturbations of the inflaton field \cite{Palma}. In this paper we will concentrate our efforts in working with Axion fields as the ones described in \cite{BrZh,Kawa,Hong}, and in particular, in the theory of \emph{oscillons}. Axions are proposed elementary particles that under low mass constraint, are of interest as a possible component of cold dark matter. Indeed, in recent years axions have become one of the most promising candidates for dark matter \cite{AX}.

\subsection{Inflation's mathematical description} Mathematically speaking, the theory goes as follows. The setting is the one given by Einstein's field equations for a Lorentzian metric $g$ in 1+3 dimensions. Let $\phi: \R^{1+3} \to \R$ be the perturbation of the inflaton field. His action is given by
\begin{equation}\label{action}
\mathcal{S} = \int_{\R^{1+3}}\sqrt{-g}\left(\dfrac{1}{2}g^{\mu\nu}\partial_\mu \phi \partial_\nu \phi - F(\phi)\right),
\end{equation}
where $g=g_{\mu \nu}$ the metric of the spacetime.
Here we always assume that the universe is spatially flat, homogeneous and isotropic. Specifically, we consider a de Sitter universe, that is, the metric $(g_{\mu\nu})$ takes the form 
\[
ds^2 = dt^2-e^{2Ht}(dx^2+dy^2+dz^2),
\]
where $H \geq 0$ represents the Hubble parameter, which we will assume constant. The Euler Lagrange equation associated to \eqref{action} is
\begin{eqnarray}\label{eq:1}
\partial_t^2\phi+3H\partial_t\phi- e^{-2Ht}\Delta \phi+f(\phi) = 0,
\end{eqnarray}
%
%
%
with the notation $f = F'$. The Laplacian $\Delta=\Delta_x$ here is taken in the spatial variable $x$. The nonlinearity $f=f(s)$ is unusual, in the sense that it will satisfy particular conditions natural for the study of inflation \cite{Amin} and or CDM \cite[eqn. (15)]{Lyth}. This general setting is particularly useful since one can also consider another important unsolved problem, the dark matter existence, for which one of the most promising theories looks quite similar, and the corresponding field is represented by another particle called axion.

\medskip

The equation \eqref{eq:1} has no conserved energy in general, except when $H = 0$. Instead, it formally satisfies the following relation 
\begin{equation}\label{Energy}
\begin{aligned}
E[\phi,\phi_t]:= &~{}\int_{\R^3} \left( \dfrac{\phi_t^2}{2}+\dfrac{|\nabla \phi|^2}{2e^{2Ht}}+F(\phi) \right) , \\
\dfrac{d}{dt}E[\phi,\phi_t]= &~{} -H\int_{\R^3} \left(3\phi_t^2+\dfrac{|\nabla \phi|^2}{e^{2Ht}} \right).
\end{aligned}
\end{equation}
This is a remarkable difference between the stationary ($H=0$) and non stationary ($H>0$) universe scenarios. It is widely accepted in Physics that today's universe satisfies $H>0$. Finding the actual value of $H$ is an active research topic, because there exists an inconsistency between the values inferred by different experiments, ones using local measurements of the current expansion and others using the $\Lambda$CDM model together with the CMB data  Notice that when $H = 0$ equation \eqref{eq:1} becomes the usual non linear wave equation and we recover the classical conservation of energy. Previous results in the case $H>0$ and focusing nonlinearities of F. John type reveal that blow up exists under a sign condition on the initial data \cite{TW}.	

\medskip

The Klein-Gordon model \eqref{eq:1} has been worked many times in the literature, specially in the case with constant coefficients and zero Hubble parameter. In this case the literature is extensive. The reader may consult the review \cite{KMM3} and references therein for a detailed account of results.

\subsection*{Acknowledgments} We deeply thank Gonzalo Palma for several illuminating discussions and comments that strongly helped us to better understand inflation theory.   

\subsection*{Organization of this paper}

This paper is organized as follows: In Section \ref{models} we discuss briefly the potentials to be studied and the physical context where they appear, and in Section \ref{results} we enunciate the general main results that will be proved on this work. In Section \ref{preliminaries} some results about Sobolev spaces are recalled, as well as classical existence and uniqueness theorems for nonlinear wave equations. Section \ref{viriales} is devoted to prove virial identities that later will be used in Section \ref{proofs} to prove Theorems \ref{thm: decaimiento}, \ref{thm: decaimiento 2} and \ref{thm: decaimiento 3}. Finally, in Section \ref{Applications} we use the theorems proved to study in detail the dynamics of the models discussed in \ref{models}.

\section{Slow-roll Inflationary and Cold Dark Matter models}\label{models} 

In this section we will present the inflationary and CDM models to be studied in this work. Some of them have been already studied in other works, but many have never been considered in the mathematical literature.

\subsection{$E$ and $T$ models} From Planck \cite[Table 5]{Planck2018} one can access to a selection of slow-roll inflationary models of high interest in order to explain cosmological inflation. Among the most favorable models we highlight the Starobinsky $R^2$, $f(R)$ modified gravity or $E_1$ model, represented by the potential
\begin{equation}\label{F11}
F_{1,1}(\phi) := (1-e^{-\phi})^2.
\end{equation}
Notice that $F_{1,1}$ is a potential exponentially unbounded as $\phi\to -\infty$ (see Fig. \ref{fig:F1n}), with some very unpleasant features. Among them, we can find
\[
F_{1,1}(0) =F_{1,1}'(0)=0, \quad \hbox{but} \quad F_{1,1}''(0)=2>0.
\]
This last positive sign makes mathematical treatment of the small data theory not easy. Cosmological theory supposes that the initial configuration starts with $\phi \gg 1$, and slowly decays in time towards the zero field value. This process is called the ``slow-roll'' dynamics, and the exponential growth of the scaling parameter of the universe ($a(t) \sim e^{2Ht }$) is described as the ``e-fold'' procedure. 

\begin{figure}[h]
   \centering
   \includegraphics[scale=0.35]{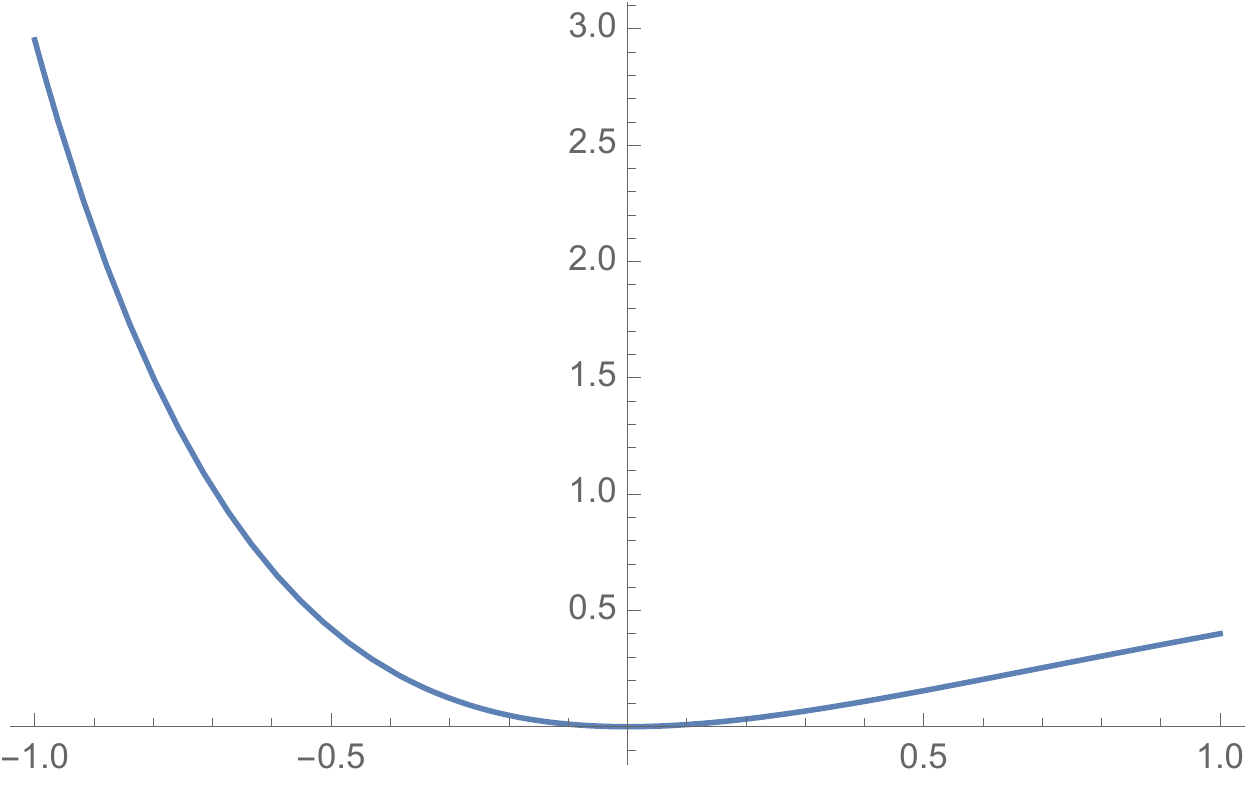}
   \includegraphics[scale=0.35]{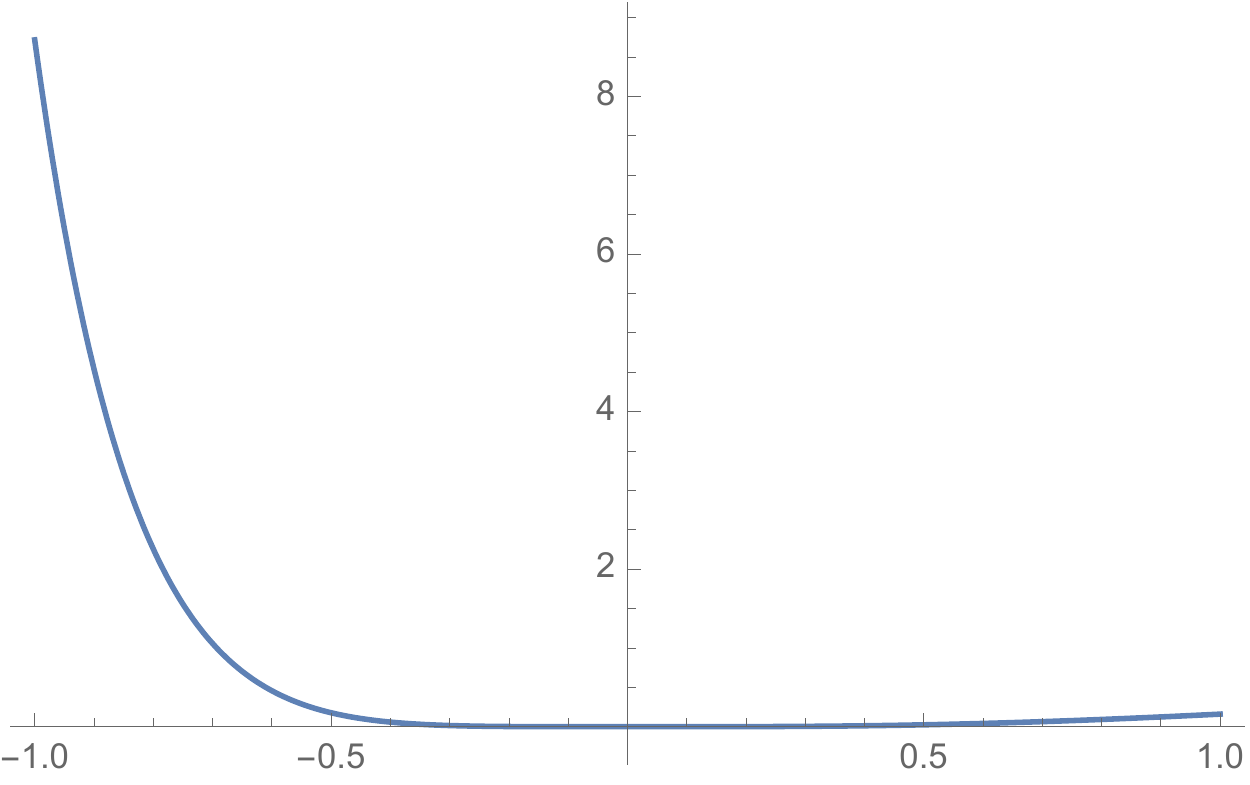}
   \includegraphics[scale=0.35]{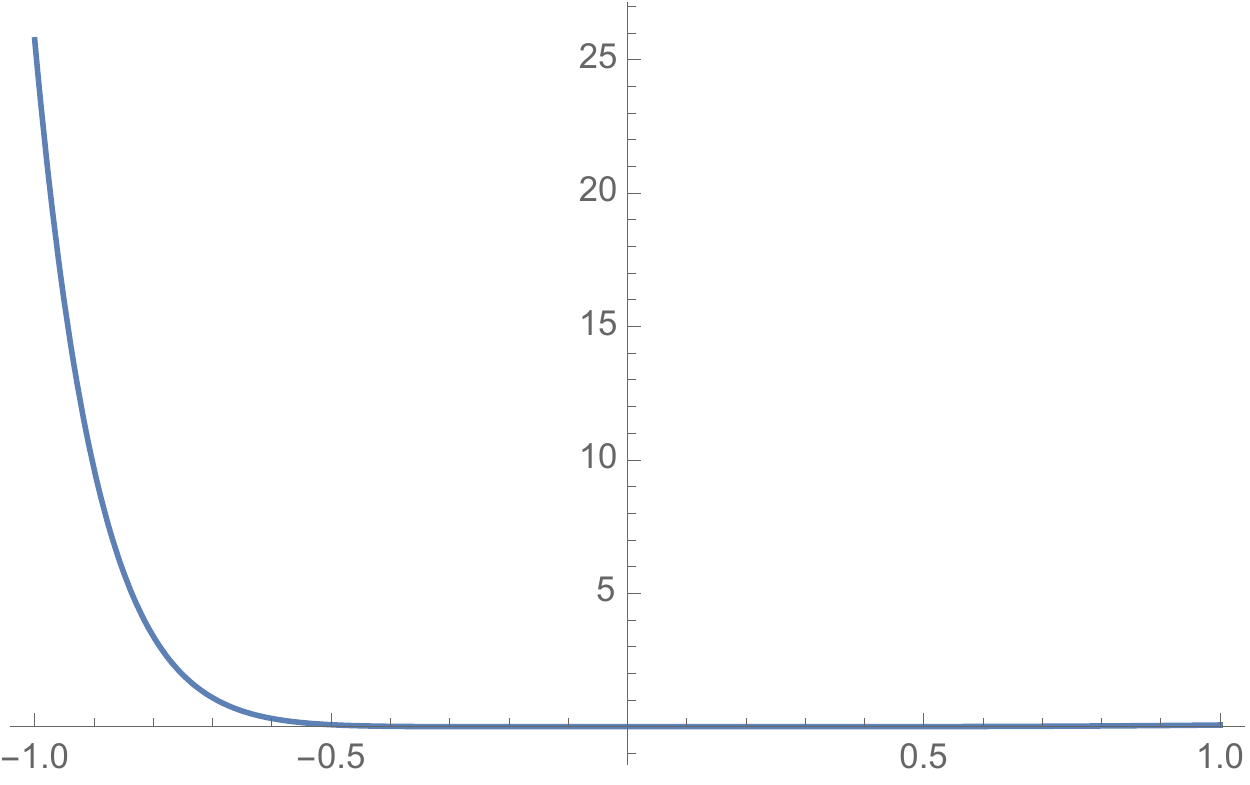} 
   \caption{The potentials $F_{1,1}$ \eqref{F11}, $F_{1,2}$ and $F_{1,3}$ in \eqref{F1n}.}
   \label{fig:F1n}
\end{figure}

The $F_{1,1}$ model is part of a family of inflationary potentials that gives rise to the so-called $E_n$ theories:
\begin{equation}\label{F1n}
F_{1,n}(\phi) := (1-e^{-\phi})^{2n}, \quad n\geq 1,
\end{equation}
see Fig. \ref{fig:F1n}. For us the most interesting cases are the ones with $n=1,2$. Additionally to the $E_n$ models, one has the $T_n$ ones, which are also highly relevant in Planck data analysis. These are given by
\begin{equation}\label{F2n}
F_{2,n}(\phi) := \tanh^{2n}(\phi), \quad n\geq 1,
\end{equation}
see Fig. \ref{fig:F2n}. The case $n=1$ is highly favorable in our setting, producing the best result of this paper, but $n=2$ has some drawbacks due to the lack of a sign condition. Only small data will be suitable to prove decay.
\begin{figure}[h]
   \centering
   \includegraphics[scale=0.35]{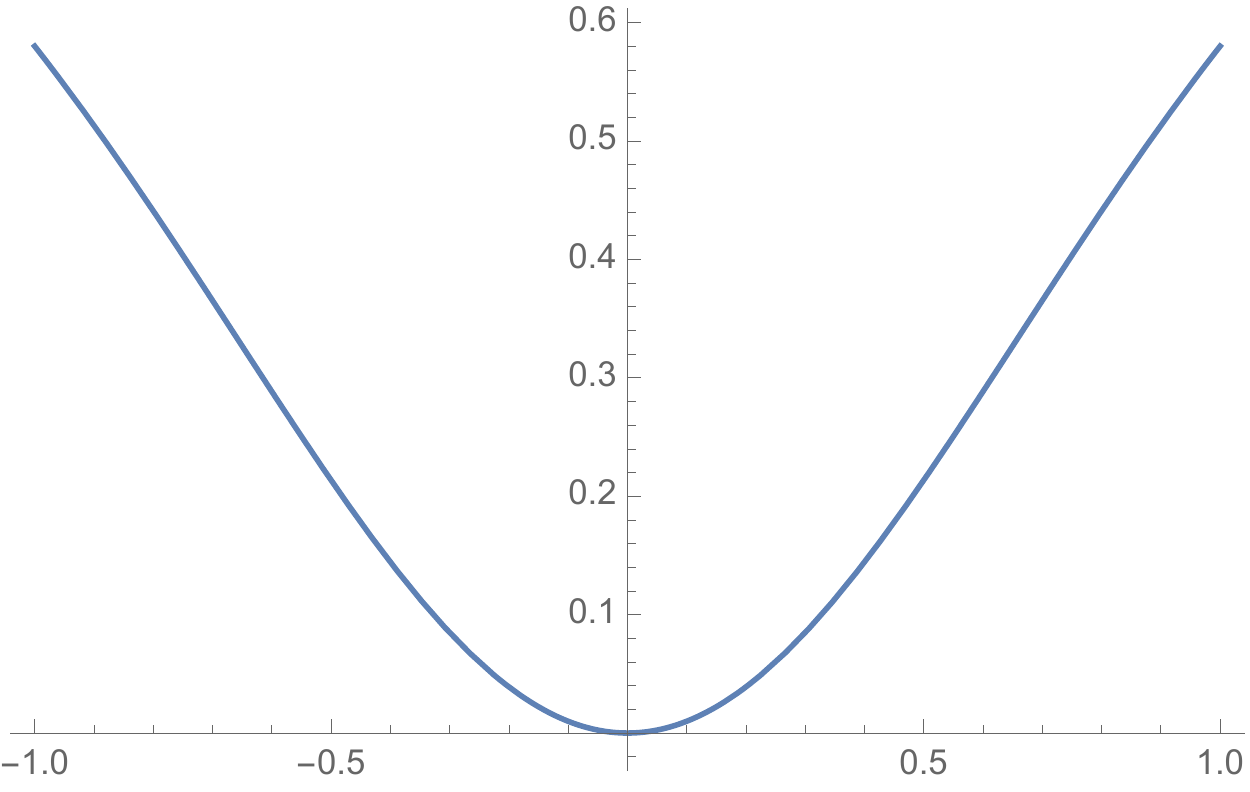}
   \includegraphics[scale=0.35]{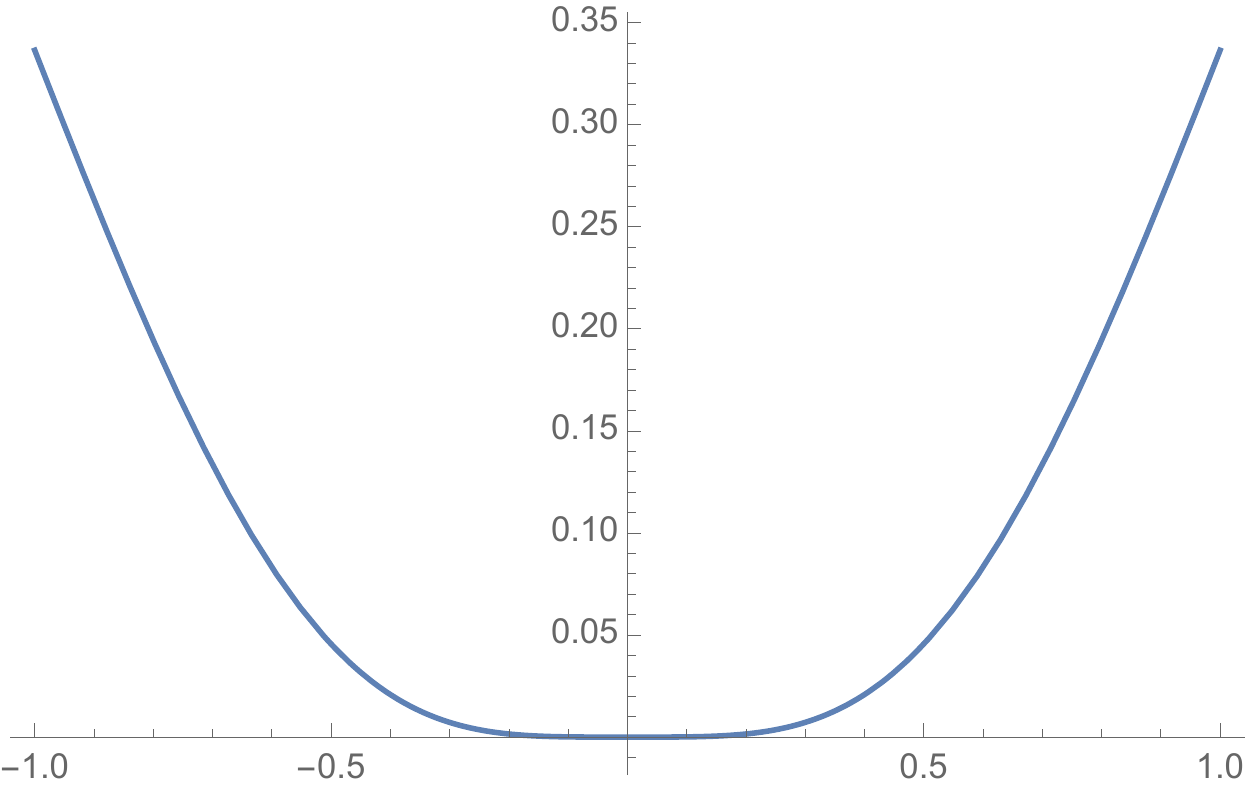}
   \includegraphics[scale=0.35]{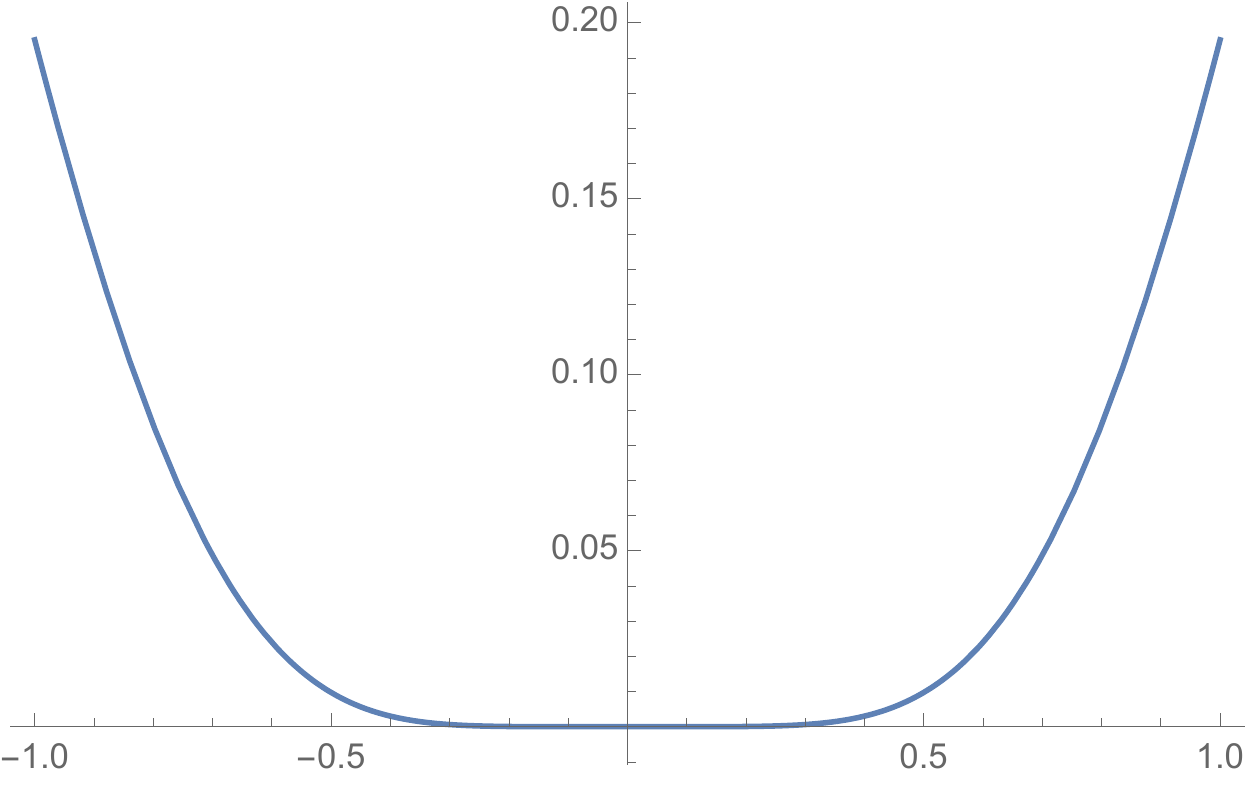} 
   \caption{The potentials $F_{2,n}$ in \eqref{F2n}.}
   \label{fig:F2n}
\end{figure}

\subsection{Natural Inflation and Axion potentials}

There are other models of equal interest and high importance in the quest for the inflaton potential. These are the so-called Natural inflation (- sign) \cite{Planck2018} and Axion potential (+ sign) \cite[p. 4]{BrZh}  (see Fig. \ref{fig:F3})
\begin{equation}\label{F3}
F_{3,\pm}(\phi):= 1 \pm \cos \phi.
\end{equation}
In both cases it is assumed that the field $\phi$ is no larger than $\frac{\pi}2$ in absolute value. The potential $F_{3,\pm}$ is the classical appearing in 1D sine-Gordon models, making the scalar field model integrable. In 3D the situation is different, and in radial symmetry integrability seem lost. In both cases we are able to give answers to the decay problem. 
\begin{figure}[h]
   \centering
    \includegraphics[scale=0.35]{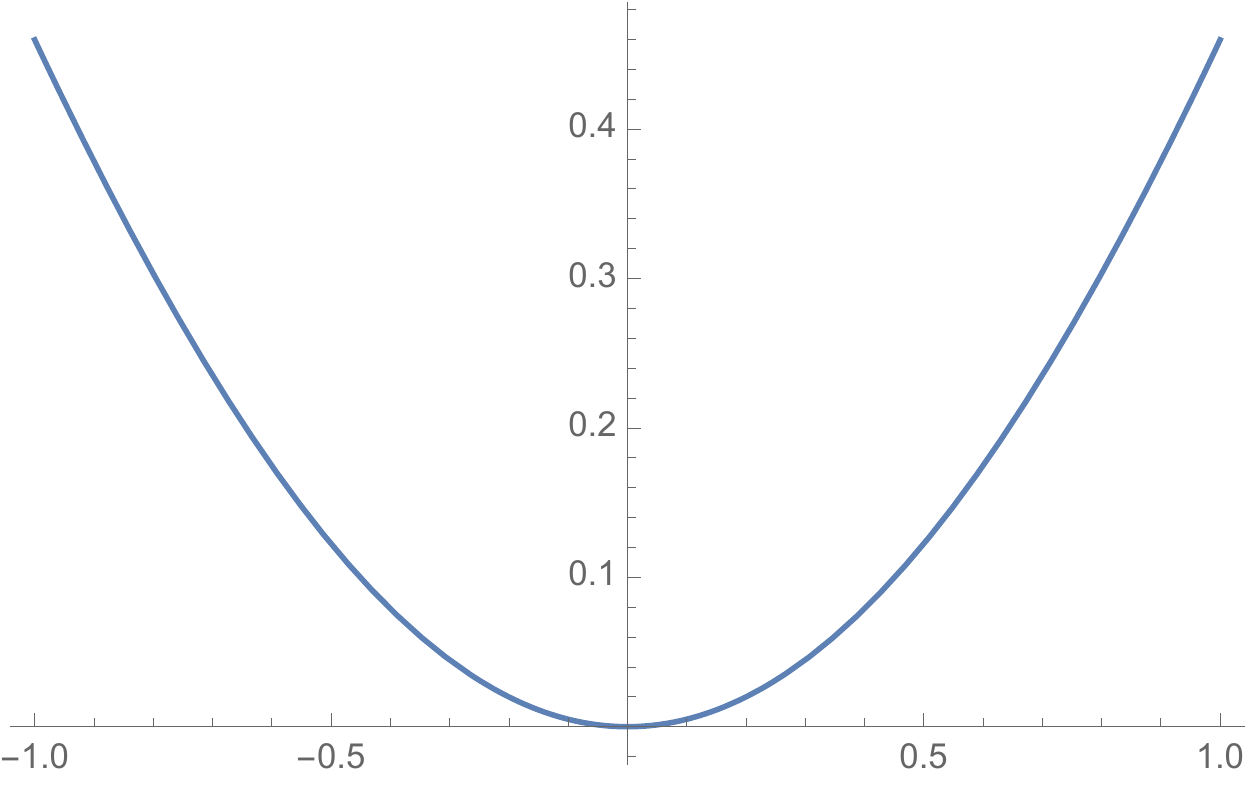}\quad
       \includegraphics[scale=0.35]{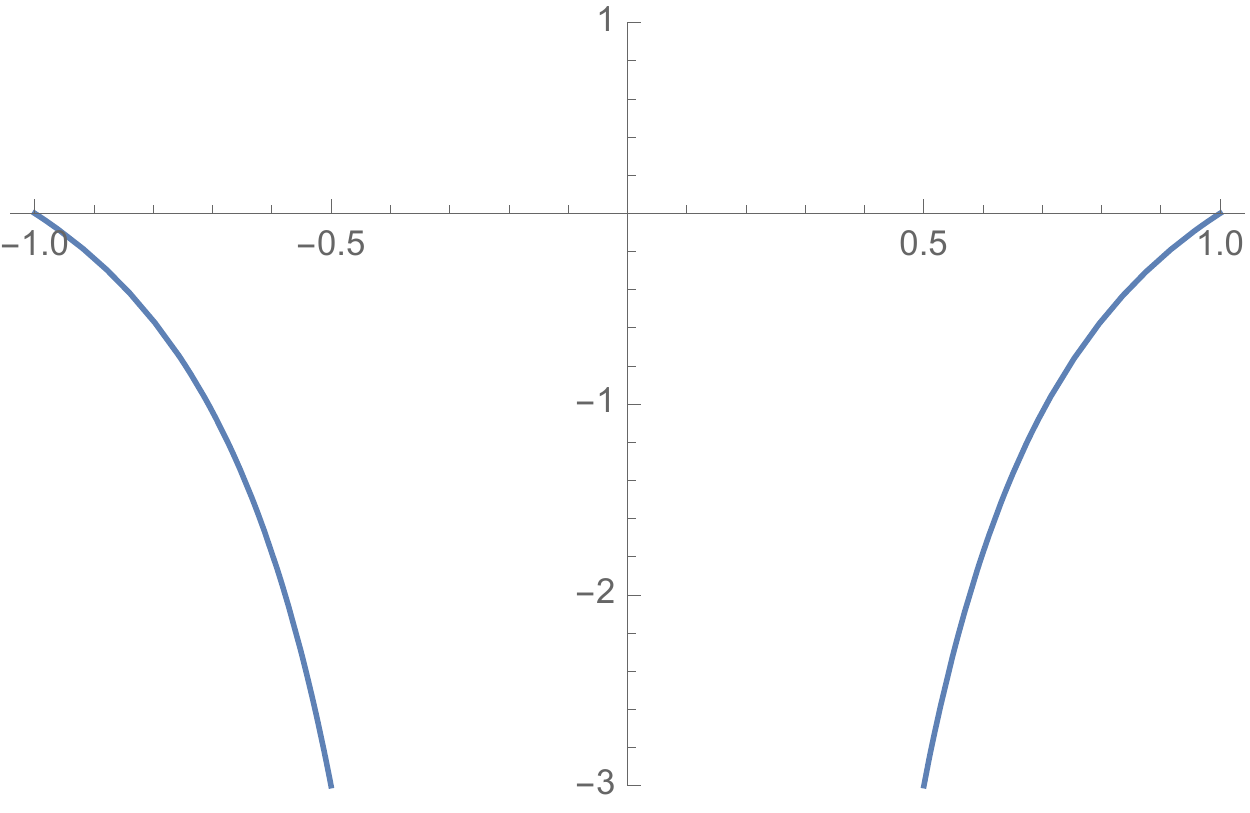}
   \caption{Left: The potential $F_{3,-}$ from \eqref{F3}. Right: The potentials $F_{4,n}$, $n=1$ in \eqref{F4}. Note that the last potential is singular at $\phi=0$.}
   \label{fig:F3}
\end{figure}
%

\subsection{The $D$-brane and Hilltop models}
In addition to the sine-Gordon models, another important model is given by the singular $D$-brane (Fig. \ref{fig:F3}):
\begin{equation}\label{F4}
F_{4,n}(\phi) := 1- \frac{1}{\phi^{2n}}, \quad n=1,2, \quad \phi\neq 0.
\end{equation}
Notice that the physical problem here is the perturbation of a $\phi$ large initial state. In this case we shall assume
\[
\phi=1 + v, \quad |v|\ll 1, 
\]
so that after renormalization (to have finite energy) we will work with the modified potential
\begin{equation}\label{tF4}
\tilde F_{4,n}(v):= 1- \frac{1}{(1+v)^{2n}} - 2nv = \frac{(1+v)^{2n} -1-2nv(1+v)^{2n}}{(1+v)^{2n}}.
\end{equation}
Also considered in this paper will be the Hilltop models (Fig. \ref{fig:F5n}):
\begin{equation}\label{F5n}
F_{5,n}(\phi) := - \phi^{2n}, \quad n=1,2.
\end{equation}
The case $n=1$ is exactly linear Klein-Gordon and will not be studied in this paper. However, the case $n=2$ is highly interesting because it behaves as one of the most promising potentials to describe inflation.

\subsection{Axion-Monodromy and log potentials}
The last two examples that we will study here also appear when studying CDM. These are the axion monodromy potential \cite{Zhang}
\begin{equation}\label{F6q}
F_{6,q}(\phi):= \frac1q \left((1+ \phi^2)^{q/2}-1 \right), \quad q\in [-1,1], \quad q\neq 0,
\end{equation}
and the logarithm potential (Fig. \ref{fig:F5n}):
\begin{equation}\label{F7}
F_7(\phi):=\frac12 \log (1+ \phi^2).
\end{equation}
The potential $F_{6,q}$ formally converges to $F_7$ as $q\to 0$. 
\begin{figure}[h]
   \centering
     \includegraphics[scale=0.35]{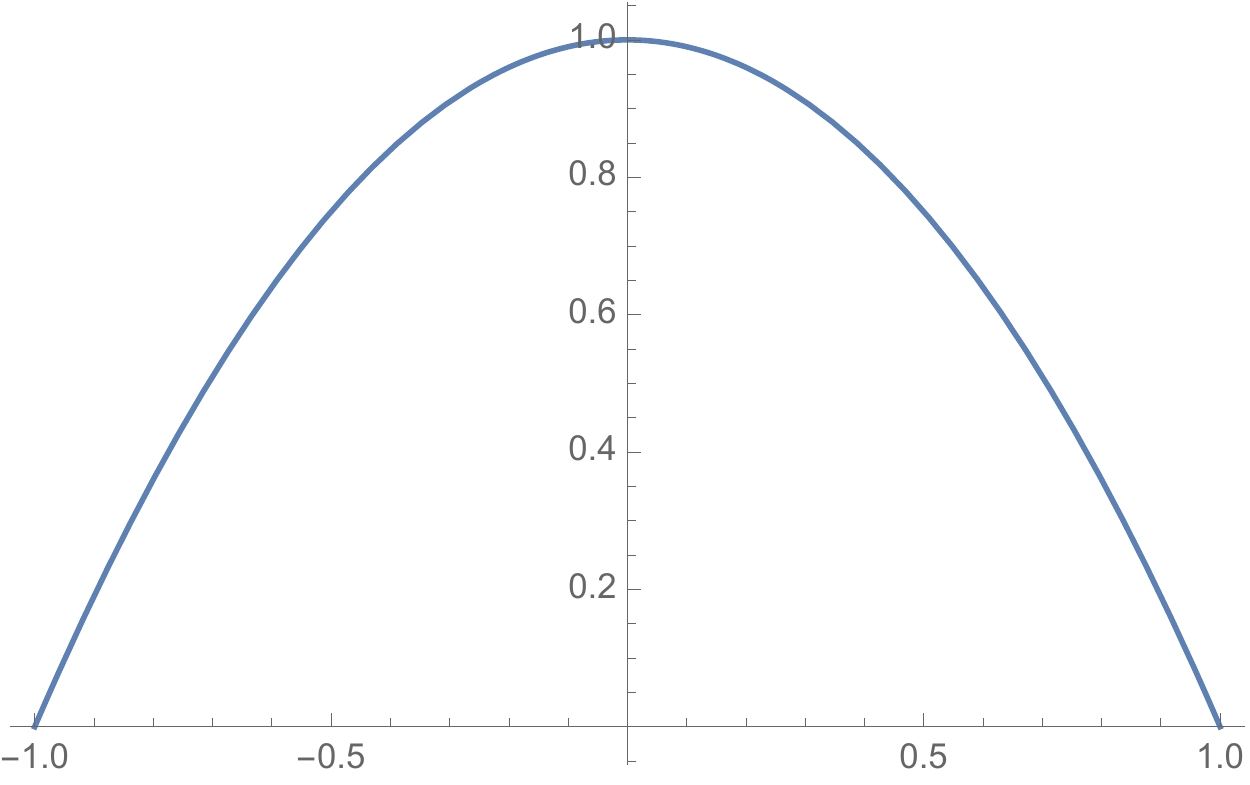} 
       \includegraphics[scale=0.35]{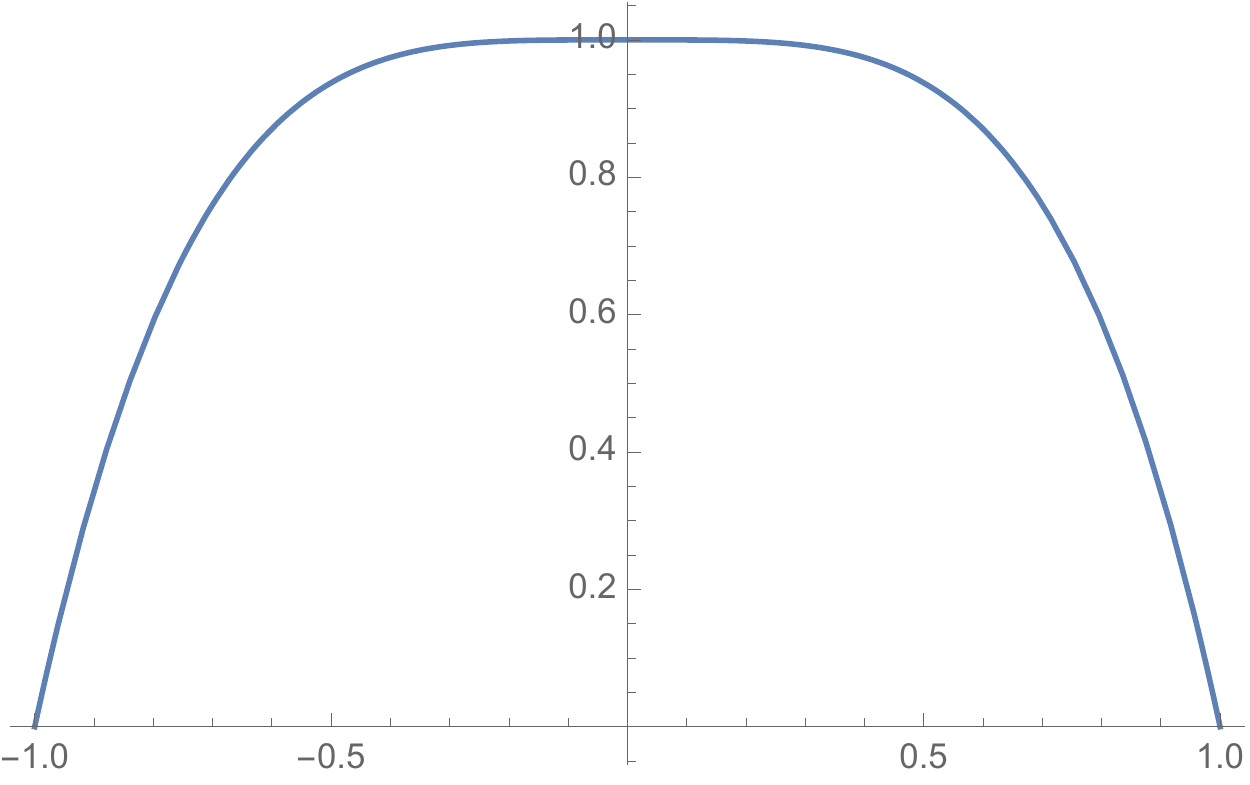} 
         \includegraphics[scale=0.35]{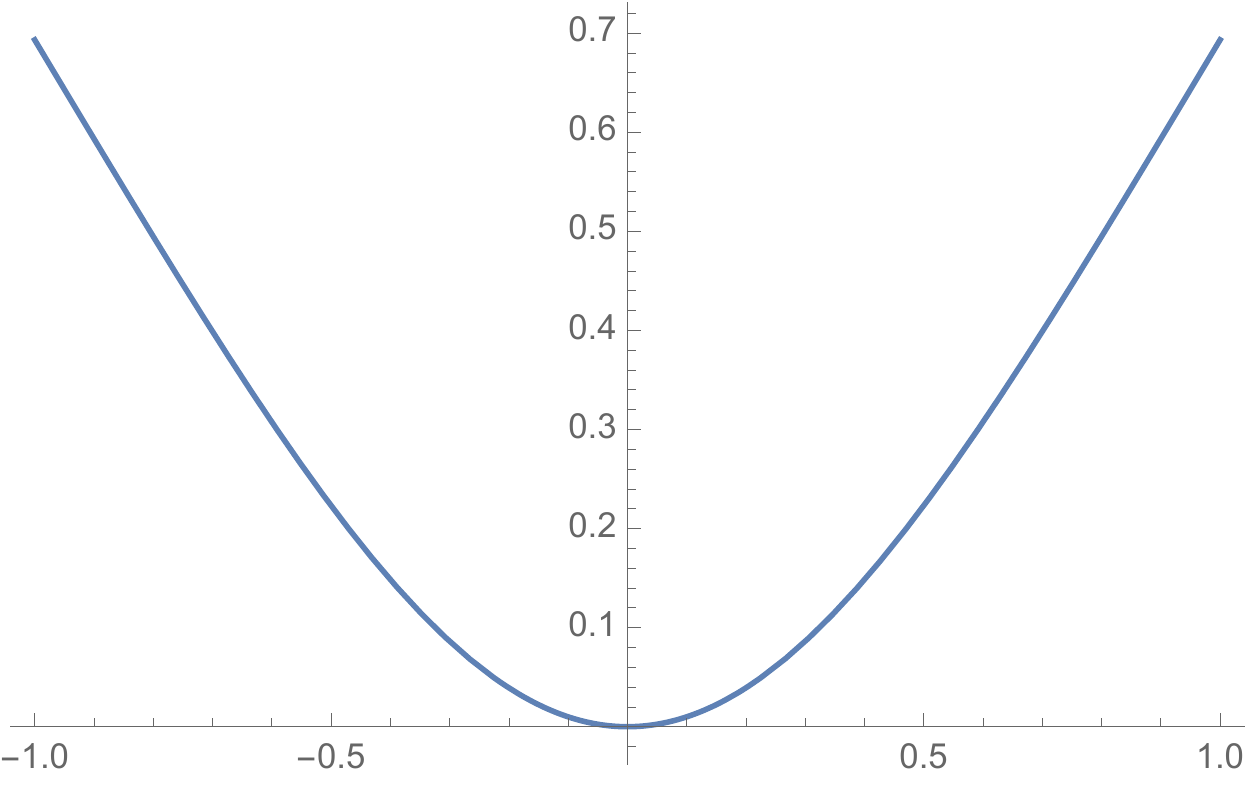} 
   \caption{The potentials $F_{5,n}$ , $n=1,2$ in \eqref{F5n} and $F_7$ from \eqref{F7}.}
   \label{fig:F5n}
\end{figure}

\medskip

These families have their own properties, usually not being part of the standard local and global well-posedness theory appearing in the literature. For each of these models, we will prove local and/or global well-posedness, and provide a proof of decay under suitable assumptions on the initial data.


\medskip

However, as far as we know, the long time behavior of the model \eqref{eq:1} with nonlinearities such as the ones considered here (and in practice, in cosmology and dark matter) have not been considered before. In that sense, our results show that strong decay is a common denominator in cosmologically motivated scalar field models.

\section{Main results}\label{results}

We shall assume that $f: \R \to \R$ is a piecewise $C^1$ function such that $f(0) = 0$, and the initial data $(\phi ,\partial_t \phi )\big|_{\{t=0\}}=(\phi _0,\phi _1) \in H^1\times L^2$ in \eqref{eq:1} is radially symmetric; consequently, from the local well-posedness theory one has that the solution $(\phi ,\partial_t \phi)$ is radial for all time whenever it is well-defined.

\medskip

Our first result is a sufficient condition to obtain local decay in the energy space. 

\begin{theorem}[Large data case] \label{thm: decaimiento} Let $f$ be globally Lipschitz and satisfying the sign conditions $F(\phi)\geq 0$ and $2F(\phi)-\phi f(\phi) \geq 0$. Then the solution $(\phi , \partial_t \phi )$ of (\ref{eq:1}) with $H = 0$ is defined for all times and 
\begin{equation}\label{decaimiento}
\lim_{t \to \infty} \|(\phi,\partial_t \phi)(t)\|_{H^1 \times L^2(B(0,R))} = 0,
\end{equation}
for any $R >0$.
\end{theorem}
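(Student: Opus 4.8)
\emph{Strategy.} I would combine three ingredients: global existence from the conserved energy; a Morawetz--type virial identity written in the radial variable, rendered monotone up to controlled remainders by the two sign hypotheses; and a soft argument turning the resulting space--time bound into the pointwise local decay \eqref{decaimiento}.

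\emph{Global existence.} As $f$ is globally Lipschitz with $f(0)=0$, the local well--posedness recalled in Section~\ref{preliminaries} produces a unique maximal $H^1\times L^2$ solution. With $H=0$ the functional $E[\phi,\partial_t\phi]$ of \eqref{Energy} is conserved, and $F\ge 0$ gives $\tfrac12\|(\partial_t\phi,\nabla\phi)(t)\|_{L^2}^2\le E[\phi_0,\phi_1]$ for all $t$, while $\|\phi(t)\|_{L^2}\le\|\phi_0\|_{L^2}+t\sqrt{2E}$ grows at most linearly; hence the $H^1\times L^2$ norm stays finite on bounded time intervals and the solution is global.

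\emph{The virial identity.} Using radiality, set $u(t,r)=r\,\phi(t,r)$, which solves $\partial_t^2u-\partial_r^2u+rf(u/r)=0$ on $r>0$ with $u(t,0)=0$, the ``potential'' $r^2F(u/r)$ being nonnegative. Fix $R>0$ and a smooth bounded weight $\varphi$ with $\varphi(0)=0$, $\varphi'\ge 0$ and $\varphi'\equiv1$ on $[0,R]$, and let $\mathcal I(t)=\int_0^\infty\varphi(r)\,\partial_tu\,\partial_ru\,dr$, so that $|\mathcal I(t)|\lesssim_R E$ uniformly in $t$ by the previous step. Multiplying the equation for $u$ by the radial Morawetz multiplier $\varphi\,\partial_ru$ — which produces no boundary term at $r=0$ since $\varphi(0)=0$ — and integrating in $r$ yields the identity of Section~\ref{viriales}, of the schematic form
\[
\frac{d}{dt}\mathcal I(t)=-\frac12\int_0^\infty\varphi'\bigl((\partial_tu)^2+(\partial_ru)^2\bigr)dr+\int_0^\infty\varphi'\,r^2F(u/r)\,dr+\int_0^\infty\varphi\,r\bigl(2F(u/r)-(u/r)f(u/r)\bigr)dr .
\]
The first term is $\le-\tfrac12\int_0^R((\partial_tu)^2+(\partial_ru)^2)\,dr$, which (back in $\phi$) controls $\|(\partial_t\phi,\nabla\phi)(t)\|_{L^2(B(0,R))}^2$, and the last two are nonnegative precisely by $F\ge0$ and $2F(s)-sf(s)\ge0$.

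\emph{Space--time bound, the limit, and the main difficulty.} Integrating on $[0,T]$ and using $|\mathcal I|\lesssim_R E$ bounds $\int_0^T\|(\partial_t\phi,\nabla\phi)(t)\|_{L^2(B(0,R))}^2dt$ by $E$ plus the time integral of the two remainder terms; these equal, up to constants, a local potential energy on a slightly larger ball and $\int_{\R^3}|2F(\phi)-\phi f(\phi)|\,|x|^{-1}dx$, the latter finite at each time by the Lipschitz bound and radial Sobolev inequalities. Absorbing the remainders — by enlarging the plateau of $\varphi$ and invoking the auxiliary mass--type virial $\int\phi_t\phi\,\chi_R$ for the $F$ term — gives $\int_0^\infty\|(\partial_t\phi,\nabla\phi)(t)\|_{L^2(B(0,R))}^2dt<\infty$. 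The map $t\mapsto\frac1R\int_R^{2R}\|(\partial_t\phi,\nabla\phi)(t)\|_{L^2(B(0,\rho))}^2d\rho$ has time derivative equal to a boundary flux plus a lower--order term, both bounded by a multiple of $E$, hence it is uniformly continuous; being in $L^1(0,\infty)$ it tends to $0$, and by monotonicity in $\rho$ so does $\|(\partial_t\phi,\nabla\phi)(t)\|_{L^2(B(0,R))}$. That $\|\phi(t)\|_{L^2(B(0,R))}\to0$ then follows by compactness: $\phi(t)$ is bounded in $H^1$ with $\nabla\phi(t)\to0$ in $L^2_{\mathrm{loc}}$, so subsequential limits in $L^2_{\mathrm{loc}}$ are constants, forced to be $0$ by finite energy and $F\ge0$. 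I expect the genuine obstacle to be this last part: contrary to the classical defocusing power case, the two potential contributions in the virial do not have the favorable sign, so controlling them requires the full strength of $F\ge0$ and $2F-sf\ge0$ — which, through a Derrick--Pohozaev identity, also exclude static finite--energy states and hence any obstruction to decay — together with the bootstrap and uniform--continuity arguments that promote the integrated bound to \eqref{decaimiento}.
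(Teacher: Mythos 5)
Your global existence step and the virial identity itself are fine, but the core of the argument has a genuine gap: with your choice of weight (bounded, increasing, $\varphi'\equiv 1$ on a plateau) the identity reads
\[
\frac{d\mathcal I}{dt}=-\frac12\int_0^\infty \varphi'\bigl((\partial_t u)^2+(\partial_r u)^2\bigr)\,dr+\int_0^\infty \varphi' r^2F(\phi)\,dr+\int_0^\infty \varphi\, r\bigl(2F(\phi)-\phi f(\phi)\bigr)\,dr,
\]
so the two potential terms, which are nonnegative by hypothesis, sit on the \emph{unfavorable} side: after integrating in time they must be bounded, not merely signed, and nothing in your scheme controls $\int_0^\infty\!\!\int \varphi' r^2F(\phi)\,dr\,dt$. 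Since $F(\phi)\le \frac{M}{2}\phi^2$ ($M$ the Lipschitz constant), this term is of exactly the same size as the weighted quantity you are trying to prove is time-integrable, with no smallness available (this is the large-data theorem, and $M$ is arbitrary), so ``absorbing'' it into the kinetic term fails; and the auxiliary mass virial $\int \phi\phi_t\chi_R$ produces $-\int\chi_R\,\phi f(\phi)$, which has no sign under the assumptions $F\ge 0$, $2F-\phi f\ge 0$ alone. The paper's proof avoids this circularity by a different structural choice: with $\psi(r)=r^2/(1+r)$ and the combination $\mathcal I=\mathcal P+\tfrac12\mathcal R$ the $\phi_t^2$ contributions cancel and \emph{every} term in $d\mathcal I/dt$ is nonnegative — the coefficient of $\phi_r^2$ is $2\psi/r-\psi'=r^2/(1+r)^2\ge0$, the $\phi^2$ coefficient is $r(r+4)/2(1+r)^4\ge0$, and $2F-\phi f$ enters multiplied by $\psi'/2\ge0$ — so the sign hypotheses work \emph{for} you and one gets directly $\int_0^\infty\|\phi\|_{H^1_w}^2\,dt\lesssim \sup_t|\mathcal I|\lesssim E$; a second virial with $\psi=-(3r^2+3r+1)/(1+r)^3$ then recovers the $\phi_t^2$ space-time bound. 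Your weight does not have this property, and that is the missing idea, not a technicality.

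A secondary weak point is the endgame. Your compactness argument for $\|\phi(t)\|_{L^2(B(0,R))}\to 0$ (local limits are constants ``forced to be $0$ by finite energy and $F\ge0$'') is not conclusive as stated: a nonzero constant has finite $L^2$ norm on a ball, $F$ may vanish at nonzero values, and energy conservation does not bound $\|\phi(t)\|_{L^2(\R^3)}$; one needs the radial Hardy-type bound $\int_0^\infty\phi^2(t,r)\,dr\lesssim \|\nabla\phi(t)\|_{L^2}^2$ (the paper's Lemma \ref{Sob_radial}) to exclude nonzero constants. The paper bypasses compactness altogether: it shows that the weighted local energy $\mathcal H(t)=\int_0^\infty \frac{r^2}{(1+r)^4}(\phi^2+\phi_r^2+\phi_t^2)$ satisfies $|\mathcal H'|\lesssim \mathcal H$ and $\mathcal H\in L^1(0,\infty)$, hence $\mathcal H(t)\to0$, which controls the full $H^1\times L^2(B(0,R))$ norm at once. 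Your uniform-continuity device plays the same role for the derivative part, but the $\phi$ part needs the Hardy input to close.
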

Note that Theorem \ref{thm: decaimiento} is valid for any size of data and no presence of the cosmological constant. 

\begin{coro}\label{coro1}
The $E$ model $F_{2,1}$ in \eqref{F2n}, the Axion-monodromy model $F_{6,q}$ in \eqref{F6q}, and the logarithm mode $F_7$ in \eqref{F7} satisfy \eqref{decaimiento}. 
\end{coro}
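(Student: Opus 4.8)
\emph{Proof strategy for Corollary~\ref{coro1}.}
The plan is to verify, for each of the three potentials, the three hypotheses of Theorem~\ref{thm: decaimiento}---that $f=F'$ is globally Lipschitz on $\R$, that $F\geq 0$, and that $2F(\phi)-\phi f(\phi)\geq 0$---after which the local decay \eqref{decaimiento} (with $H=0$) follows at once. Nonnegativity of $F$ is immediate: $\tanh^{2}\phi\geq 0$; for $F_{6,q}$ the bracket $(1+\phi^2)^{q/2}-1$ has the sign of $q$ for every $q\in[-1,1]\setminus\{0\}$, so the quotient by $q$ is $\geq 0$; and $\tfrac12\log(1+\phi^2)\geq 0$ since $1+\phi^2\geq 1$.

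For the Lipschitz condition I would exhibit a bound for $f'$. For $F_{2,1}$ one has $f(\phi)=2\tanh\phi\,\sech^{2}\phi$, and $f'$ is a polynomial expression in the bounded functions $\tanh\phi$ and $\sech\phi$, hence uniformly bounded. For $F_{6,q}$, $f(\phi)=\phi(1+\phi^2)^{q/2-1}$ and a direct computation gives $f'(\phi)=(1+\phi^2)^{q/2-2}\bigl(1+(q-1)\phi^2\bigr)$, which is continuous on $\R$ and, since $q\leq 1$, behaves like $|\phi|^{q-2}\to 0$ as $|\phi|\to\infty$; thus $f'$ is bounded. For $F_7$, $f(\phi)=\phi/(1+\phi^2)$ and $|f'(\phi)|=|1-\phi^2|/(1+\phi^2)^{2}\leq 1$.

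The only point needing a short argument is the sign condition $2F-\phi f\geq 0$, and the uniform device is the substitution $u:=1+\phi^2\geq 1$ together with monotonicity. For the axion-monodromy potential one computes $2F(\phi)-\phi f(\phi)=\bigl(\tfrac2q-1\bigr)u^{q/2}+u^{q/2-1}-\tfrac2q=:h(u)$, and since $h(1)=0$ while $h'(u)=\bigl(1-\tfrac q2\bigr)u^{q/2-2}(u-1)\geq 0$ for $u\geq 1$ (using $q<2$), we conclude $h\geq 0$. The logarithmic case is the formal $q\to 0$ analogue: $2F(\phi)-\phi f(\phi)=\log u+u^{-1}-1=:k(u)$ with $k(1)=0$ and $k'(u)=(u-1)/u^{2}\geq 0$. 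For $F_{2,1}$ I would instead factor $2F(\phi)-\phi f(\phi)=2\tanh\phi\,\bigl(\tanh\phi-\phi\,\sech^{2}\phi\bigr)$ and note that $g(\phi):=\tanh\phi-\phi\,\sech^{2}\phi$ satisfies $g(0)=0$ and $g'(\phi)=2\phi\tanh\phi\,\sech^{2}\phi\geq 0$, so $g$ carries the sign of $\phi$ (equivalently of $\tanh\phi$), making the product nonnegative.

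I do not expect a genuine obstacle here: everything collapses to one-variable calculus inequalities. The single mild subtlety is keeping track of the sign of $q$ in the axion-monodromy potential---both when dividing by $q$ to get $F\geq 0$ and when bounding $f'$---but since $|q|\leq 1<2$ all monotonicity signs line up. Once the three hypotheses are checked, Theorem~\ref{thm: decaimiento} applies verbatim and yields \eqref{decaimiento} for each of $F_{2,1}$, $F_{6,q}$ and $F_7$.
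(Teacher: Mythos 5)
Your proposal is correct and follows essentially the same route as the paper: both reduce Corollary~\ref{coro1} to checking the hypotheses of Theorem~\ref{thm: decaimiento} by one-variable calculus, with the identical factorization $2F_{2,1}-\phi f_{2,1}=2\tanh\phi\,(\tanh\phi-\phi\sech^2\phi)$ and the same monotonicity argument (your substitution $u=1+\phi^2$ is just the paper's differentiation in $s$ in disguise, since $\frac{d}{ds}h(1+s^2)=(2-q)s^3(1+s^2)^{q/2-2}$). If anything, your justification that $\tanh\phi-\phi\sech^2\phi$ carries the sign of $\phi$ (via $g(0)=0$, $g'=2\phi\tanh\phi\sech^2\phi\geq 0$) is cleaner than the paper's terse appeal to $|\tanh s|\leq|s|$, and you make explicit the global Lipschitz verification that the paper only records in a remark.
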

Recall that the first potential appears as one of the most interesting candidates to represent inflation according to Table 5 of \cite{Planck2018}. See Section \ref{Applications} for further details.

\medskip

A second corollary from Theorem \ref{thm: decaimiento} is the following:

\begin{coro}\label{coro3}
Under \eqref{decaimiento}, the corresponding scalar field model has no standing waves nor breathers.  
\end{coro}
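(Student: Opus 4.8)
The plan is to argue by contradiction, using that the hypothesis \eqref{decaimiento} holds for \emph{every} radial finite-energy solution of \eqref{eq:1} with $H=0$, whereas a standing wave or a breather is, by its very nature, a solution whose local energy norm is a nonzero function of time that cannot tend to $0$. First I would fix terminology: by a (radial) standing wave I mean a static solution $\phi(t,x)=Q(x)$ with $(Q,0)\in H^1\times L^2$ radial, equivalently a finite-energy radial solution of $-\Delta Q+f(Q)=0$; by a (radial) breather I mean a solution $(\phi,\partial_t\phi)\in C(\R;H^1\times L^2)$ that is radial, localized, $T$-periodic for some $T>0$, and not identically constant in time. In either case the object is, in particular, a global-in-time radial solution in the energy space, so \eqref{decaimiento} applies to it.

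Next I would read off the local energy along the time sequence dictated by periodicity. For a static solution, $t\mapsto\|(\phi,\partial_t\phi)(t)\|_{H^1\times L^2(B(0,R))}=\|(Q,0)\|_{H^1\times L^2(B(0,R))}$ is constant in $t$; for a breather it is $T$-periodic, hence equals its value at $t=0$ along the sequence $t=nT$, $n\in\N$. Since \eqref{decaimiento} forces this quantity to tend to $0$ as $t\to\infty$, in both cases we get $\|(\phi,\partial_t\phi)(0)\|_{H^1\times L^2(B(0,R))}=0$ for every $R>0$, hence $(\phi_0,\phi_1)=(0,0)$ a.e.\ on $\R^3$. Uniqueness in the energy space — valid since $f$ is globally Lipschitz, see Section \ref{preliminaries} — then gives $\phi\equiv 0$, contradicting $Q\not\equiv 0$ for a standing wave and non-triviality in time for a breather. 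The same computation rules out any localized almost-periodic solution: recurrence of the orbit supplies a sequence $t_n\to\infty$ along which $\|(\phi,\partial_t\phi)(t_n)\|_{H^1\times L^2(B(0,R))}$ stays bounded away from $0$, which is incompatible with \eqref{decaimiento}.

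The argument being soft, there is no serious analytic obstacle; the delicate points are purely bookkeeping. One must make sure that standing waves and breathers genuinely lie in the class covered by \eqref{decaimiento}: that they are radial, globally defined in time, and of finite energy — all part of the standard definitions — and that the solution is the \emph{unique} one issued from its initial data, so that \eqref{decaimiento} indeed describes its evolution; the global Lipschitz assumption on $f$ furnishes this uniqueness. One should finally note that, exactly like Theorem \ref{thm: decaimiento}, the statement is confined to the radial sector, so it excludes radial standing waves and radial breathers, while a hypothetical non-radial such object is not addressed by this method. It is worth emphasizing that this rules out the existence of such coherent structures without invoking any variational (Pohozaev/Derrick-type) obstruction: the dynamical decay statement alone is responsible.
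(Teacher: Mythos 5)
Your argument is correct and is precisely the (implicit) argument the paper intends: Corollary \ref{coro3} is stated without a written proof because it follows immediately from \eqref{decaimiento} once one observes that the local norm $\|(\phi,\partial_t\phi)(t)\|_{H^1\times L^2(B(0,R))}$ is constant in $t$ for a standing wave and $T$-periodic for a breather, hence can tend to $0$ only if it vanishes identically. Your closing caveat — that the statement lives in the radial sector, consistent with the paper's standing radial-data assumption — is the right one to record; note only that the paper's separate nonexistence result for standing solitons in Section \ref{Applications} uses a Pohozaev identity instead, which is a genuinely different (variational, non-dynamical) route available for static solutions.
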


By breathers, we mean localized in space, time periodic solutions. See also \cite{KMM1} for similar results as Corollary \ref{coro3} in the case of small odd data and odd nonlinearities.

\medskip

Our second result concerns the case where the condition $2F(\phi)-\phi f(\phi) \geq 0$ is not satisfied, still in the case $H=0$. Several cosmological models are in this class. Our result is now local decay under smallness assumption and growth below a critical power.
  
\begin{theorem} \label{thm: decaimiento 2}
If $f: \R \to \R$ is of class $C^1$ and satisfies that for some $C, \delta >0$ 
\begin{eqnarray} \label{crecimiento}
0\leq \phi f(\phi) \leq C \phi^4, \quad \forall \phi \in (-\delta, \delta),
\end{eqnarray}
or
\begin{eqnarray} \label{crecimiento 2}
2F(\phi) - \phi f(\phi) \geq 0, \quad \forall \phi \in (-\delta, \delta),
\end{eqnarray}
then any global solution $\phi \in C([0,\infty); H^1\times L^2)$ of (\ref{eq:1}) with $H = 0$ such that 
\begin{equation}\label{acota}
\sup_{t \geq 0} \|\phi(t)\|_{H^1 \cap L^\infty} \leq  \varepsilon,
\end{equation}
satisfies 
\begin{equation}\label{convergence}
\lim_{t \to \infty} \|(\phi,\phi_t)(t)\|_{H^1 \times L^2(B(0,R))} = 0,
\end{equation}
for any $R>0$ provided that $\varepsilon >0$ is small enough.
\end{theorem}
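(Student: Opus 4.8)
\emph{Strategy.} The plan is to prove Theorem~\ref{thm: decaimiento 2} by the virial method, using identities of the type constructed in Section~\ref{viriales}, in two stages: first a weighted virial yields spacetime integrability of the local energy, then a localized energy estimate promotes this to the limit \eqref{convergence}. Two preliminary observations. Since $\varepsilon<\delta$, the a priori bound \eqref{acota} keeps the solution inside $\{|\phi|<\delta\}$ for all time, so \eqref{crecimiento} or \eqref{crecimiento 2} applies pointwise along the flow; in the first case it moreover forces $|f(\phi)|\lesssim|\phi|^3$, $0\le F(\phi)\lesssim|\phi|^4$ and $|2F(\phi)-\phi f(\phi)|\lesssim|\phi|^4$ on the range of the solution. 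And, $\phi$ being radial, it is convenient to work with $w(t,r):=r\,\phi(t,r)$, which turns \eqref{eq:1} with $H=0$ into the half-line equation $w_{tt}-w_{rr}+rf(w/r)=0$, $r>0$, with the Dirichlet condition $w(t,0)=0$; for this $\|(\phi,\phi_t)(t)\|_{H^1\times L^2(B(0,R))}^2\lesssim\int_0^R(w_r^2+w^2+w_t^2)\,dr$, and the Dirichlet condition supplies a Poincaré inequality $\int_0^{\Lambda}w^2\le \Lambda^2\int_0^{\Lambda}w_r^2$ on every bounded interval, which will be essential.

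\emph{Step 1: spacetime bound.} Fix $R>0$ and pick a bounded weight $b$ with $b'\ge0$ and $b'\gtrsim1$ on $[0,\Lambda]$ for a large $\Lambda\gg R$, slowly varying so that $\|b\|_{W^{3,\infty}}$ is small in the relevant relative sense. With $\mathcal{I}(t):=\int_0^\infty w_t\big(b\,w_r+\tfrac12 b'\,w\big)\,dr$, which satisfies $|\mathcal{I}(t)|\lesssim\varepsilon^2$ by \eqref{acota}, the equation for $w$ produces (cf.\ Section~\ref{viriales}) an identity of the form
\[
\frac{d}{dt}\mathcal{I}(t)=-\int_0^\infty b'\,w_r^2\,dr+\frac14\int_0^\infty b'''\,w^2\,dr+\int_0^\infty\Big(b\,r+\tfrac12 b'\,r^2\Big)\big(2F(\phi)-\phi f(\phi)\big)\,dr .
\]
The first term is coercive for the local gradient energy; the second is absorbed into it via Poincaré once $b$ is slowly varying; the third is handled by the hypotheses — it is compatible with \eqref{crecimiento 2}, and under \eqref{crecimiento} it is $O(\varepsilon^2)$ relative to the coercive term, again using Poincaré. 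A complementary multiplier of the form $\chi\,w_tw$ trades $\int\chi w_t^2$ for the gradient energy (plus a flux and an $f(\phi)$ term), so that the full local density $w_t^2+w_r^2+w^2$ enters. Integrating in time, using $|\mathcal{I}(t)|\lesssim\varepsilon^2$, and closing the self-improving bound on the implicit constant, we obtain for $\varepsilon$ small
\[
\int_0^\infty\!\!\int_0^R\big(w_t^2+w_r^2+w^2\big)\,dr\,dt\ \lesssim\ \varepsilon^2 ,\qquad\text{i.e.}\qquad \int_0^\infty\!\!\int_{B(0,R)}\big(\phi_t^2+|\nabla\phi|^2+\phi^2\big)\,dx\,dt\ \lesssim\ \varepsilon^2
\]
for every $R>0$.

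\emph{Step 2: from integrability to decay.} Let $\chi_R$ be smooth with $\chi_R\equiv1$ on $B(0,R)$ and $\mathrm{supp}\,\chi_R\subset B(0,2R)$, and set $\mathcal{N}_R(t):=\int_{\R^3}\chi_R\big(\phi_t^2+|\nabla\phi|^2+\phi^2\big)\,dx\ge\|(\phi,\phi_t)(t)\|_{H^1\times L^2(B(0,R))}^2$. By Step~1 (with a larger radius) $\int_0^\infty\mathcal{N}_{2R}(t)\,dt<\infty$, so $\mathcal{N}_R(t_n)\to0$ along some $t_n\to\infty$. Differentiating $\mathcal{N}_R$ and substituting the equation, every term is either a flux supported in $\{R<|x|<2R\}$ or carries a factor $f(\phi)$, which on the range of the solution obeys $|f(\phi)|\lesssim|\phi|$ (as $f\in C^1$, $f(0)=0$); hence $|\mathcal{N}_R'(t)|\lesssim\mathcal{N}_{2R}(t)$. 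Therefore $\mathcal{N}_R(t)\le\mathcal{N}_R(s)+C\int_s^\infty\mathcal{N}_{2R}(\tau)\,d\tau$ for $t\ge s$, and letting $s=t_n\to\infty$ forces $\mathcal{N}_R(t)\to0$, that is \eqref{convergence}.

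\emph{Main obstacle.} The crux is Step~1 in case \eqref{crecimiento}, where $2F-\phi f$ has no sign: the nonlinear remainder must be absorbed purely by smallness, but a priori it is bounded only by $\varepsilon^2$ times a local energy that is not known to be time-integrable, so one cannot close using \eqref{acota} alone — the remainder has to be reincorporated into the spacetime estimate, forcing a bootstrap on its constant, which is exactly where the subcritical exponent $4$ in \eqref{crecimiento} (and the resulting extra factor $\varepsilon^2$) is used. Related delicate points are the design of the weight $b$ so that the coercive term genuinely controls all of $w_t^2+w_r^2+w^2$ (and not merely $w_r^2$) while remaining compatible with this absorption, the treatment of the tail $r\gg\Lambda$ where there is no coercivity, and the uniform control of the singular nonlinearity $rf(w/r)$ near the origin.
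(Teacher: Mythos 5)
Your architecture is the same as the paper's (a momentum-plus-scaling virial giving time-integrability of a weighted local energy, a complementary $\phi\phi_t$-type multiplier to recover $\phi_t^2$, then a localized-energy differential inequality plus a vanishing subsequence; your Step~2 is essentially the paper's concluding argument with a cutoff $\chi_R$ in place of the global weight $r^2/(1+r)^4$). The genuine gap is in Step~1, precisely at the point you yourself label the ``main obstacle'', and it comes from your choice of weight. With a bounded $b$ having $b'\gtrsim 1$ only on $[0,\Lambda]$, two things break. First, with the identity exactly as you wrote it, the nonlinear term $\int(br+\tfrac12 b'r^2)(2F-\phi f)$ carries the \emph{unfavorable} sign relative to $-\int b'w_r^2$: in case \eqref{crecimiento 2} it is nonnegative (so it opposes the monotonicity you need) and there is no quantitative bound on $2F-\phi f$ in that case, so nothing absorbs it; the case you call ``compatible'' is the one your orientation cannot handle. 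Second, in case \eqref{crecimiento} the remainder lives also where your coercivity vanishes: for $r\gg\Lambda$ one only gets $\int_{r>\Lambda}(br+\tfrac12 b'r^2)|2F-\phi f|\lesssim \varepsilon^2\int_{r>\Lambda}r^{-1}w^2\lesssim \varepsilon^4/\Lambda$ at each fixed time, a quantity that is neither time-integrable a priori nor proportional to the coercive term, so no ``bootstrap on the implicit constant'' can reincorporate it into the spacetime estimate. (A smaller point: $|\mathcal I(t)|\lesssim\varepsilon^2$ does not follow from \eqref{acota}, which says nothing about $\phi_t$; one only gets $\sup_t|\mathcal I|\lesssim E(0)$ from energy conservation and $F\ge0$, which fortunately is all the argument needs.)

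The missing idea is the specific weight of the paper, which removes both problems simultaneously. In your $w=r\phi$ variables it is the \emph{decreasing} weight $b(r)=1/(1+r)$, i.e.\ $\psi(r)=r^2/(1+r)$ in the $\phi$ variables: by \eqref{dIdt bonito} (Corollary \ref{coro: 1}) every quadratic term in $d\mathcal I/dt$ is then positive for all $r>0$, with coefficients $\frac{r^2}{(1+r)^2}$ for $\phi_r^2$ and $\frac{r(r+4)}{2(1+r)^4}$ for $\phi^2$ — no cutoff at $\Lambda$, no tail region, and no Poincar\'e absorption of $b'''$ is needed after this regrouping — while the nonlinear coefficient is $\frac{\psi'}{2}=\frac{r(r+2)}{2(1+r)^2}$. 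Under \eqref{crecimiento 2} the nonlinear term is then simply nonnegative and can be dropped; under \eqref{crecimiento} the radial bound $r|\phi|\le C\|\phi\|_{H^1}\le C\varepsilon$ (Lemma \ref{Sob_radial}) gives $\phi^4\le C\varepsilon^2(1+r)^{-2}\phi^2$, so the quartic term is bounded by $C\varepsilon^2\frac{r(r+2)}{2(1+r)^4}\phi^2$ and is absorbed \emph{pointwise in $r$ and $t$} into the coercive $\phi^2$ term for $\varepsilon$ small — no bootstrap at all (this is Proposition \ref{prop 2}). With that replacement, your complementary multiplier (the paper uses $\tilde{\mathcal R}$ with $\psi=-\frac{3r^2+3r+1}{(1+r)^3}$) and your Step~2 go through essentially as you describe. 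As written, however, the proposal does not close Step~1 in either case.
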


\begin{remark}
Note that the condition $\phi f(\phi) \geq 0$ ensures that $F(\phi)\geq 0$, and the model has defocusing character. Condition \eqref{crecimiento} describes that the model has sufficient flatness at the origin to allow decay, and finally condition \eqref{crecimiento 2} it is just an application of Theorem \ref{thm: decaimiento} to the case of small data and needs no proof.
\end{remark}

Among the potentials that satisfy \eqref{convergence} (under solutions satisfying \eqref{acota}) we find the following.

\begin{coro}\label{coro2}
The $E$ model $F_{1,n}$ in \eqref{F2n} and the $T$ model $F_{2,n}$ in \eqref{F2n}, for $n\geq 2$, satisfy \eqref{convergence}. The Natural Inflation model $F_{3,-}$ in \eqref{F3} satisfies \eqref{convergence}.  Finally, the Hilltop model $F_{5,2}$ also satisfies \eqref{convergence}.
\end{coro}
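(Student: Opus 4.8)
The plan is to check, for each of the four families appearing in the statement, that the nonlinearity $f=F'$ satisfies one of the two structural hypotheses \eqref{crecimiento} or \eqref{crecimiento 2} of Theorem \ref{thm: decaimiento 2} on some interval $(-\delta,\delta)$; once this is established, \eqref{convergence} follows immediately by applying that theorem to any global solution obeying \eqref{acota}. No well-posedness argument is needed here, since smallness of the solution is already part of the hypothesis of the corollary. In each case $f$ is also $C^1$ (indeed smooth where defined) with $f(0)=0$, as required.

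First I would treat the $E_n$ and $T_n$ models with $n\ge2$, for which I claim \eqref{crecimiento} holds. For $F_{1,n}(\phi)=(1-e^{-\phi})^{2n}$ one has $f_{1,n}(\phi)=2n(1-e^{-\phi})^{2n-1}e^{-\phi}$, hence
\[
\phi f_{1,n}(\phi)=2n\,\phi\,(1-e^{-\phi})^{2n-1}e^{-\phi}\ge0,
\]
since $1-e^{-\phi}$ has the sign of $\phi$ and $2n-1$ is odd. Because $|1-e^{-\phi}|\le C_\delta|\phi|$ on $(-\delta,\delta)$, this yields $\phi f_{1,n}(\phi)\le C|\phi|^{2n}\le C|\phi|^4$ there, using $\delta\le1$ and $2n\ge4$, which is exactly \eqref{crecimiento}. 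The same computation applies to $F_{2,n}(\phi)=\tanh^{2n}(\phi)$, where $\phi f_{2,n}(\phi)=2n\,\phi\tanh^{2n-1}(\phi)(1-\tanh^2\phi)\ge0$ and, using $|\tanh\phi|\le|\phi|$, is bounded by $2n|\phi|^{2n}\le2n|\phi|^4$ on $(-1,1)$ when $n\ge2$. The borderline cases $n=1$ genuinely fail \eqref{crecimiento} (there $\phi f(\phi)\sim2\phi^2$), consistent with $F_{1,1}$ being excluded from the corollary and $F_{2,1}$ being covered instead by Theorem \ref{thm: decaimiento}.

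Next I would treat $F_{3,-}$ and $F_{5,2}$ via \eqref{crecimiento 2}. For Natural Inflation, $\phi f_{3,-}(\phi)=\phi\sin\phi\sim\phi^2$ near the origin, so \eqref{crecimiento} is unavailable; instead set $h(\phi):=2F_{3,-}(\phi)-\phi f_{3,-}(\phi)=2(1-\cos\phi)-\phi\sin\phi$. Then $h(0)=h'(0)=0$ and $h''(\phi)=\phi\sin\phi\ge0$ for $|\phi|\le\pi$, so $h$ is convex on $[-\pi,\pi]$ with a critical point at the origin, hence $h\ge0$ there (equivalently $h(\phi)=\tfrac1{12}\phi^4+O(\phi^6)$), which gives \eqref{crecimiento 2} on the admissible range $|\phi|\le\pi/2$. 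For the Hilltop model this is immediate: with $F_{5,2}(\phi)=-\phi^4$ and $f_{5,2}(\phi)=-4\phi^3$ one gets $2F_{5,2}(\phi)-\phi f_{5,2}(\phi)=-2\phi^4+4\phi^4=2\phi^4\ge0$ for all $\phi$; note that $\phi f_{5,2}(\phi)=-4\phi^4<0$ and $F_{5,2}<0$, so this model is covered neither by \eqref{crecimiento} nor, strictly, by Theorem \ref{thm: decaimiento}, and one must invoke Theorem \ref{thm: decaimiento 2} under hypothesis \eqref{crecimiento 2} directly.

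The argument is essentially bookkeeping; the one step where I expect to need a little care is the verification of \eqref{crecimiento 2} for $F_{3,-}$, since one needs $2F-\phi f\ge0$ to hold on a genuine interval rather than merely at the level of the leading Taylor coefficient — this is why I would argue through the sign of $h''$ instead of quoting the expansion. A further minor point, for $F_{1,n}$, is that the given solution should have finite potential energy; in $1+3$ dimensions this is automatic for data in $H^1\cap L^\infty$ because $F_{1,n}(\phi)\sim\phi^{2n}$ near $0$, but in any event it is subsumed in the hypothesis \eqref{acota}.
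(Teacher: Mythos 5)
Your proposal is correct and follows the paper's overall strategy: reduce each model to the hypotheses \eqref{crecimiento} or \eqref{crecimiento 2} of Theorem \ref{thm: decaimiento 2} and then invoke that theorem for solutions obeying \eqref{acota}. The differences are in how two of the verifications are carried out. For the $E$ model $F_{1,n}$, the paper splits into $s\in[0,1]$ (using $1+s\le e^s$) and $s\in[-1,0]$, where it introduces $g_a(s)=s(1-e^{-s})^3-as^4$ and argues through the vanishing of derivatives at the origin and the sign of $g_a^{(iv)}$; your uniform bound $|1-e^{-\phi}|\le C_\delta|\phi|$ on $(-\delta,\delta)$ together with $2n\ge 4$ reaches the same conclusion $0\le \phi f_{1,n}(\phi)\le C\phi^4$ more directly, and you also make the nonnegativity explicit, which the paper leaves implicit. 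For Natural Inflation, the paper does not actually verify \eqref{crecimiento 2} on an interval: it expands $2F_{3,-}(\phi)-\phi f_{3,-}(\phi)=\tfrac1{12}\phi^4+O(\phi^6)$ and then absorbs the error by the smallness argument of the preceding subsection (the modified virial computation used for the non-minimal coupling model), whereas you prove the clean pointwise inequality $2(1-\cos\phi)-\phi\sin\phi\ge 0$ on $[-\pi,\pi]$ via $h(0)=h'(0)=0$, $h''(\phi)=\phi\sin\phi\ge0$, which lets you quote Theorem \ref{thm: decaimiento 2} verbatim and is arguably tighter than the paper's Taylor-plus-absorption step. The $T$ model computation ($\phi f_{2,n}(\phi)\le 2n\phi^4$ via $|\tanh\phi|\le|\phi|$) and the Hilltop identity $2F_{5,2}-\phi f_{5,2}=2\phi^4\ge0$ coincide with the paper's, and your observation that $F_{5,2}<0$ forces the use of the \eqref{crecimiento 2} branch matches the paper's direct virial treatment of that case.
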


Notice that one of the most important models, the so-called $R^2$-model
\[
F_{2,1}(s) = (1-e^{-s})^{2}, \quad s\in\R,
\]
does not fit in the assumptions of this result. Additionally, the case of the D-brane model \eqref{F4}, if $\phi$ is chosen of the form $\phi= 1+ v$, one should expect decay, however that case has remain elusive to us. Finally, the Axion model $F_{3,+}$ in \eqref{F3} has proved to fall outside the scope of Theorems \ref{thm: decaimiento} and \ref{thm: decaimiento 2}.
\medskip

\noindent
{\bf The case with cosmological constant.} Now we turn into the case of our current universe, assuming $H>0$. Here we have the following results:

\begin{theorem}\label{thm: decaimiento 3}
Consider equation \eqref{eq:1} with initial data $(u_0,u_1) = (\varepsilon g,\varepsilon h)$, $g,h \in C_0^\infty(\R^3)$. If $H >0$, $F(x) \geq 0$ for all $x \in \R$ and there exist $\delta,M >0$ such that \[
|f'(\phi)| \leq M\phi^2, \quad \forall \phi \in (-\delta,\delta),
\]
then there exist $\varepsilon >0$ such that the solution to \eqref{eq:1} is global in time. Moreover,
\begin{enumerate}
\item The energy of any global solution  decays to zero outside of the forward light cone, that is 
\begin{equation}\label{H00}
\lim_{t \to \infty} \int_{R(t)} \left( \dfrac{\phi_t^2}{2}+\dfrac{|\nabla \phi|^2}{2e^{2Ht}}+F(\phi) \right)= 0, 
\end{equation}
where $R(t) = \{x \in \R^3 \; | \; |x| > (1+b)t\}$ for any $b >1$.
\medskip
\item If $R>0$ is fixed,
\begin{equation}\label{H01}
\lim_{t \to \infty} \int_{B(0,R)} \left( \dfrac{\phi_t^2}{2}+\dfrac{|\nabla \phi|^2}{2e^{2Ht}}+F(\phi) \right)= 0.
\end{equation}
\end{enumerate}
\end{theorem}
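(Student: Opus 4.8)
The strategy is to combine three things: finite speed of propagation, which confines the solution to a fixed ball and makes assertion~(1) almost immediate; the dissipative identity \eqref{Energy}, which makes the energy monotone and gives a time--summability bound along the flow; and a virial identity in the spirit of Section~\ref{viriales}, which upgrades that summability to the vanishing of the potential part of the energy. First, for global existence: since $f(0)=0$ and $|f'(\phi)|\le M\phi^{2}$ on $(-\delta,\delta)$, near the origin $|f(\phi)|\lesssim|\phi|^{3}$ and $0\le F(\phi)\lesssim\phi^{4}$, so the nonlinearity is energy-subcritical of cubic type. Combining the local well-posedness of Section~\ref{preliminaries} with \eqref{Energy} -- which, because $F\ge0$, shows $t\mapsto E[\phi,\phi_t](t)$ is non-increasing, hence $\le E[\varepsilon g,\varepsilon h]=O(\varepsilon^{2})$ -- together with a continuity/bootstrap argument that propagates the $C_0^\infty$-regularity of the data and uses the friction $3H\partial_t\phi$ to absorb the nonlinear contributions, one keeps $\|\phi(t)\|_{L^\infty}$ small (so only the good part of $f$ is ever felt) and obtains a global solution with $\sup_{t\ge0}\|\phi(t)\|_{L^\infty}\lesssim\varepsilon$.

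Next, assertion~(1). Equation \eqref{eq:1} propagates at speed $e^{-Hs}$ at time $s$: the standard local-energy estimate on backward truncated cones closes because $F\ge0$ makes the energy density nonnegative and the friction term $-3H\phi_t^{2}$ has the favorable sign. Hence data supported in $B(0,\rho)$ produce a solution supported in $B\big(0,\rho+\int_0^t e^{-Hs}\,ds\big)\subseteq B(0,\rho+1/H)$ for all $t$; as soon as $t>(\rho+1/H)/(1+b)$ this ball is disjoint from $R(t)=\{|x|>(1+b)t\}$, so the integral in \eqref{H00} is identically zero. (More robustly, an energy-flux identity on the exterior $\{|x|>(1+b)s\}$, whose lateral boundary is spacelike since $1+b>1\ge e^{-Hs}$, bounds $\int_{R(t)}$ of the energy density by $\int_{\{|x|>bt\}}$ of the initial energy density, which tends to $0$.)

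Finally, assertion~(2). Since $\phi(t,\cdot)$ is supported in the fixed ball $B(0,R_0)$, $R_0=\rho+1/H$, proving \eqref{H01} for every $R$ is the same as proving $E[\phi,\phi_t](t)\to0$. By \eqref{Energy}, $E(t)$ decreases to a limit $E_\infty\ge0$ and $\int_0^\infty\!\int_{\R^3}\big(3\phi_t^{2}+e^{-2Ht}|\nabla\phi|^{2}\big)=\tfrac1H(E(0)-E_\infty)<\infty$; hence there is $t_n\to\infty$ along which $\|\phi_t(t_n)\|_{L^2}^{2}+e^{-2Ht_n}\|\nabla\phi(t_n)\|_{L^2}^{2}\to0$, so that $\int F(\phi(t_n))\to E_\infty$. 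To force $E_\infty=0$ I would feed in the virial functional of Section~\ref{viriales} built from the multiplier $x\cdot\nabla\phi+c\phi$ (no spatial cut-off is needed, the solution being compactly supported), now also carrying the term $3H\int(x\cdot\nabla\phi)\phi_t$ produced by the Hubble friction; integrating that identity in time and using $|f(\phi)|\lesssim|\phi|^{3}$, $F(\phi)\lesssim\phi^{4}$, the smallness above, and the summability just obtained, one extracts a space--time bound forcing $\int F(\phi(t_n'))\to0$ along a subsequence. Then $E_\infty=0$, and by monotonicity $E(t)\to0$, which is \eqref{H01} (and re-proves \eqref{H00}). The delicate point is this last step: the only time-uniform control of $\nabla\phi$ carries the degenerating weight $e^{-2Ht}$, so the friction and nonlinear error terms in the virial are not manifestly integrable; the natural remedy -- and the reason the virials must be ``adapted to the setting'' -- is the conformal-time substitution $\eta=(1-e^{-Ht})/H\in[0,1/H)$, $w(\eta,x)=\phi(t,x)$, under which \eqref{eq:1} becomes $w_{\eta\eta}+\frac{2H}{1-H\eta}w_\eta-\Delta w+\frac{1}{(1-H\eta)^{2}}f(w)=0$ on a bounded $\eta$-interval, with strong (blowing-up) friction and a conformally matched nonlinearity $\frac{1}{(1-H\eta)^{2}}f(w)=O(w^{3})$; there $t\to\infty$ is $\eta\to1/H^{-}$, the energy estimates live on a bounded domain, and the virial computation reduces to a standard one.
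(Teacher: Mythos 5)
Your global-existence sketch and your treatment of item (1) are broadly consistent with what can be made rigorous, but two caveats are worth flagging. For existence, the energy identity \eqref{Energy} only controls $\|\phi_t\|_{L^2}$ and $e^{-Ht}\|\nabla\phi\|_{L^2}$; it does not give the $L^\infty$ smallness you invoke to "feel only the good part of $f$". The paper has to run an $H^2$ bootstrap (differentiating the equation and applying the linear estimate of Lemma \ref{lema: LW}, together with finite speed of propagation to keep compact support) precisely to obtain that $L^\infty$ control, so your "continuity/bootstrap argument" is hiding the actual work. For item (1), your support-confinement argument (speed $e^{-Hs}$, support in a fixed ball $B(0,\rho+1/H)$, hence the exterior integral is eventually identically zero) is correct for the compactly supported data of the theorem, but it is strictly weaker than the paper's argument, which uses the moving weight $\varphi=1+\tanh(r+\sigma t+b)$, $\sigma<-1$, in Corollary \ref{coro 2} and therefore applies to \emph{any} finite-energy global solution, which is how item (1) is stated.

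The genuine gap is in item (2). You correctly reduce \eqref{H01} to $E(t)\to0$ and extract $t_n\to\infty$ along which the kinetic and gradient parts vanish, so everything hinges on showing $E_\infty=0$, i.e.\ that $\int F(\phi(t_n))\to0$; but this step is only gestured at. The dilation multiplier $x\cdot\nabla\phi+c\phi$ produces friction cross terms such as $3H\int(x\cdot\nabla\phi)\phi_t$ and gradient terms without the $e^{-2Ht}$ weight, which are exactly the contributions you concede are "not manifestly integrable", and the conformal-time substitution does not obviously repair this: on $\eta\in[0,1/H)$ both the damping $2H/(1-H\eta)$ and the effective nonlinearity $(1-H\eta)^{-2}f(w)$ blow up at the endpoint, and $t\to\infty$ becomes a limit at a \emph{finite} endpoint, so a "standard" virial (which yields time-integrability over an infinite interval and then a squeezing argument) does not apply without new uniform-in-$\eta$ estimates that you do not supply. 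The paper closes this step by a different mechanism: it applies Lemma \ref{lem: virial 2} with a time-independent, $R$-rescaled exponentially decaying weight $\varphi(r)=\varphi_0(r/R)$, so that the Hubble friction term $-H\int r^2\varphi\bigl(3\phi_t^2+e^{-2Ht}\phi_r^2\bigr)$ is the dominant negative contribution and the only error is the cross term $-\int r^2\varphi'\,e^{-2Ht}\phi_t\phi_r$, absorbed for $R,t$ large; integrating in time gives $\int^\infty\int r^2\varphi\bigl(\phi_t^2+e^{-2Ht}\phi_r^2\bigr)\,dt<\infty$, and then the weighted local energy $\mathcal J$ (which contains the $F(\phi)$ term) is driven to zero as in the proofs of Theorems \ref{thm: decaimiento} and \ref{thm: decaimiento 2}. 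Without an actual substitute for that space--time bound forcing the potential part to vanish, your proof of \eqref{H01} is incomplete at its central point.
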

Estimate \eqref{H01} shows that locally the energy must converge to zero, however, the global energy, although decreasing, may not converge to zero in general. Their main outcome should depend on the existence of moving solitary waves. In the case of radial data, this is strongly unlikely, but solitary rings of finite energy might exists. 

\begin{remark}
The global existence ensured in Theorem \ref{thm: decaimiento 3} can be weakened to the simple assumption $(\phi,\phi_t)\in H^1\times L^2$ radial and bounded in time of finite energy; however this assumption seems not simple to prove in the case of general nonlinearities and we have preferred to give a more restrictive but evidently nonempty set of data for which Theorem \ref{thm: decaimiento 3} holds.
\end{remark}

\section{Preliminaries}\label{preliminaries}

In this section we gather some results needed for the proof of our main result. We first deal with Sobolev estimates in the case of radially defined functions.

\subsection{A density lemma}
The following result is standard in the literature, but we include its proof for completeness reasons.

\begin{lemma} \label{lemma: density}
If $n \geq 3$ then $C_0^\infty(\R^n\setminus \{0\})$ is dense in $H^1(\R^n)$. Moreover, if $u \in H^1(\R^n)$ is radial and $u_k \in C_0^\infty(\R^n\setminus \{0\})$ is such that $u_k \to u$ in $H^1(\R^n)$, then we can choose $(u_k)_k$ also radial.
\end{lemma}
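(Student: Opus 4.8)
The plan is to prove the two assertions separately, starting with the density of $C_0^\infty(\R^n\setminus\{0\})$ in $H^1(\R^n)$ for $n\ge 3$. Since $C_0^\infty(\R^n)$ is already dense in $H^1(\R^n)$, it suffices to show that any $u\in C_0^\infty(\R^n)$ can be approximated in the $H^1$ norm by functions in $C_0^\infty(\R^n\setminus\{0\})$. First I would introduce a smooth radial cutoff $\chi\in C_0^\infty(\R^n)$ with $\chi\equiv 1$ near the origin, $0\le\chi\le 1$, supported in the unit ball, and set $\chi_\varepsilon(x):=\chi(x/\varepsilon)$. Then $u_\varepsilon:=(1-\chi_\varepsilon)u$ lies in $C_0^\infty(\R^n\setminus\{0\})$, and the task reduces to showing $\chi_\varepsilon u\to 0$ in $H^1$ as $\varepsilon\to 0$. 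The $L^2$ part is immediate from dominated convergence since $\|\chi_\varepsilon u\|_{L^2}\lesssim \|u\|_{L^\infty}\varepsilon^{n/2}$. For the gradient, $\nabla(\chi_\varepsilon u)=\chi_\varepsilon\nabla u+\varepsilon^{-1}(\nabla\chi)(x/\varepsilon)u$; the first term vanishes in $L^2$ as before, and the second is bounded by $\|u\|_{L^\infty}\|\nabla\chi\|_{L^\infty}\varepsilon^{-1}\mathbf{1}_{|x|\le\varepsilon}$ in $L^\infty$, hence its $L^2$ norm is $\lesssim \varepsilon^{-1}\varepsilon^{n/2}=\varepsilon^{(n-2)/2}\to 0$ precisely because $n\ge 3$. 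This is the one place the dimensional hypothesis is genuinely used, and it is the crux of the whole first claim.

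For the second assertion, suppose $u\in H^1(\R^n)$ is radial and $(u_k)_k\subset C_0^\infty(\R^n\setminus\{0\})$ with $u_k\to u$ in $H^1$. The natural idea is to symmetrize: replace each $u_k$ by its spherical average $\tilde u_k(x):=\fint_{SO(n)}u_k(Rx)\,dR$, where the integral is against normalized Haar measure on the rotation group. Each $\tilde u_k$ is radial, smooth, compactly supported, and vanishes on a neighbourhood of the origin (the support condition is rotation-invariant), so $\tilde u_k\in C_0^\infty(\R^n\setminus\{0\})$. Since the $H^1$ norm is rotation-invariant and symmetrization is an averaging operation (hence a contraction for the norm, by Minkowski's integral inequality applied to $u_k(R\cdot)-u(R\cdot)$), we get
\[
\|\tilde u_k-u\|_{H^1}=\left\|\fint_{SO(n)}\big(u_k(R\cdot)-u(R\cdot)\big)\,dR\right\|_{H^1}\le \fint_{SO(n)}\|u_k(R\cdot)-u(R\cdot)\|_{H^1}\,dR=\|u_k-u\|_{H^1},
\]
using that $u$ is already radial so $u(R\cdot)=u$. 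Hence $\tilde u_k\to u$ in $H^1$ as well, which gives the desired radial approximating sequence.

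The only mildly delicate point is justifying that $\tilde u_k$ is genuinely smooth and that derivatives commute with the Haar average; this is routine since $u_k$ is smooth with compact support and $SO(n)$ is a compact Lie group, so differentiation under the integral sign is legitimate and $\tilde u_k\in C_0^\infty$. The support statement is even simpler: if $u_k$ is supported in the annulus $\{r_0\le|x|\le r_1\}$ with $r_0>0$, then so is $u_k(R\cdot)$ for every $R\in SO(n)$, hence so is $\tilde u_k$. I expect the main obstacle, such as it is, to be purely bookkeeping around the symmetrization step — ensuring the contraction estimate is stated for the right norm and that one has indeed used radiality of the limit $u$ to identify $\fint u(R\cdot)\,dR$ with $u$; there is no serious analytic difficulty beyond the scaling computation in the first part.
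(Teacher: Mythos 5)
Your proof is correct. For the first assertion you and the paper do essentially the same thing: excise a shrinking neighbourhood of the origin with a rescaled cutoff and control the extra gradient term $\varepsilon^{-1}(\nabla\chi)(x/\varepsilon)u$. You do it by the direct scaling count $\varepsilon^{-1}\cdot\varepsilon^{n/2}=\varepsilon^{(n-2)/2}\to 0$, while the paper dominates the same term by $C\|u\|_\infty\|\nabla\theta\|_\infty|x|^{-1}\mathbf{1}_{B(0,1)}$, which lies in $L^2(\R^n)$ exactly when $n\ge 3$, and invokes dominated convergence; these are two phrasings of the same capacity computation, and both isolate the role of $n\ge 3$ correctly.

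For the second assertion your route is genuinely different. The paper does not touch the given sequence at all: it mollifies the radial limit $u$ with a \emph{radial} mollifier $\varphi_k$, notes that a convolution of radial functions is radial and that $\varphi_k\star u\to u$ in $H^1$, and then feeds these radial smooth approximants into the first part. You instead symmetrize an arbitrary approximating sequence by averaging over $SO(n)$ against Haar measure and use Minkowski's integral inequality plus rotation invariance of the $H^1$ norm (and radiality of $u$, so that $\fint u(R\cdot)\,dR=u$) to get the contraction $\|\tilde u_k-u\|_{H^1}\le\|u_k-u\|_{H^1}$. Your argument is the more literal reading of the statement (you modify the given $u_k$ rather than build a new sequence), and it cleanly preserves the annular support so no second pass through part one is needed; the paper's mollification argument is shorter to state but, as written, produces smooth radial approximants that need not be compactly supported when $u$ is a general $H^1$ function, so it implicitly relies on an additional truncation at infinity before part one can be applied. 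All the points you flag as delicate (differentiation under the Haar integral, rotation invariance of the support annulus) are indeed routine, and the proof is complete.
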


\begin{remark}
Notice that Lemma \ref{lemma: density} is not valid in dimensions 1, because by Sobolev's embedding one has $H^1(\R) \hookrightarrow C(\R)$. Since $H^1$ convergence implies uniform convergence, if the Lemma holds we could conclude that $u(0) = 0$ for every $u \in H^1(\R)$.  An analogous argument works in the case $H^2(\R^3)$. This Lemma is still true in dimension 2, but the proof is more complicated and is not relevant to this work.
\end{remark}

\begin{proof}[Proof of Lemma \ref{lemma: density}]
Since $C_0^\infty(\R^n)$ is dense in $H^1(\R^n)$ is enough to prove that 
\[
C_0^\infty(\R^n) \subseteq \overline{C_0^\infty(\R^n\setminus \{0\})}^{\| \|_{H^1}}.
\]
Let $u \in C_0^\infty(\R^n)$ and $\theta \in C^\infty(\R^n)$ such that 
\begin{equation*}
\theta(x) = 
\begin{cases}
0 & \text{ if }|x| <1 \\
1 & \text{ if }|x| >2.
\end{cases}
\end{equation*}
Since $\theta(kx) \to 1$ a.e. we have that $u_k(x) = u(x)\theta(kx) \to u(x)$ in $L^2(\R^n)$ by dominated convergence theorem. For the derivative we have that $$\partial_i (u(x)\theta(kx))  = \partial_i u(x)\theta(kx)+ku(x)\partial_i\theta(kx)$$
and the first term converges to $\partial_i u(x)$ by the same argument as above. For the second term notice that $\partial_i\theta(kx)$ converges to 0 a.e. and  
\[
\text{supp}(ku(x)\partial_i\theta(kx)) \subseteq  B\left(0,\frac{2}{k}\right) \cap B\left(0,\frac{1}{k}\right)^c.
\]
This implies that 
\[
|ku(x)\partial_i\theta(kx)| \leq \dfrac{2|u(x)\partial_i\theta(kx)|}{|x|},
\]
and thus $ku(x)\partial_i\theta(kx) \to 0$ a.e.. In addition, we have that
\[
|ku(x)\partial_i\theta(kx)| \leq \dfrac{2\|u\|_{\infty} \|\partial_i\theta\|_{\infty}}{|x|}\mathds{1}_{B(0,1)} \in L^2(\R^n),
\]
and we conclude by dominated convergence theorem.

When $u$ is radial we can take 
\[
\varphi(x) = \begin{cases}
\dfrac{1}{|x|^2-1} &\text{if }|x|<1 \\
0 & \text{if }|x| \geq 1, 
\end{cases}
\]
and define $\varphi_k(x) = Ck^n\varphi(kx)$, where $C>0$ is a normalization constant. It's well known that $(\varphi_n \star u) \to u$ in $H^1(\R^n)$, and the convolution of radial functions is also radial. Thus, is enough to apply the previous part of the lemma to the sequence $u_n = (\varphi_n \star u)$ to conclude.
\end{proof}

\subsection{Estimates in radial Sobolev spaces} The following result is standard in the literature, for completeness we include the proof.

\begin{lemma}\label{Sob_radial}
If $u \in H^1(\R^3)$ is radial; with abuse of notation, $u(x) =u(r)$. Then $u(r) \in L^2(0,\infty)$ and  $ru(r) \in L^p(0,\infty)$ for all $p \in [2,\infty]$. Moreover, we have the estimate
\[
\sup_{r \geq 0}|ru(r)| \leq C \|u\|_{H^1(\R^3)}.
\]
\end{lemma}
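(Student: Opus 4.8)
The statement is the classical radial Sobolev estimate in $\R^3$, and the natural route is to establish the pointwise bound $\sup_{r\geq 0}|ru(r)|\leq C\|u\|_{H^1(\R^3)}$ first by a density-plus-fundamental-theorem-of-calculus argument, and then deduce the integrability claims from it. So first I would assume $u\in C_0^\infty(\R^3)$ radial (this is a dense class by Lemma \ref{lemma: density}, or more simply by the standard density of $C_0^\infty$ in $H^1$ together with mollification preserving radiality), and work with the profile $u=u(r)$ on $(0,\infty)$.

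**Key steps.** Writing $g(r):=r^2 u(r)^2$, one has $g(0)=0$ and $g(r)\to 0$ as $r\to\infty$ since $u$ has compact support, so for any fixed $r>0$,
\[
r^2 u(r)^2 = -\int_r^\infty \frac{d}{d\rho}\bigl(\rho^2 u(\rho)^2\bigr)\,d\rho
= -\int_r^\infty \bigl(2\rho\, u(\rho)^2 + 2\rho^2 u(\rho) u'(\rho)\bigr)\,d\rho.
\]
Bounding the absolute value and using $2|ab|\leq a^2+b^2$ gives
\[
r^2 u(r)^2 \leq \int_0^\infty \bigl(2\rho\, u(\rho)^2 + \rho^2 u(\rho)^2 + \rho^2 u'(\rho)^2\bigr)\,d\rho.
\]
The last two terms, up to the factor $4\pi$, are exactly $\|u\|_{L^2(\R^3)}^2$ and $\|\nabla u\|_{L^2(\R^3)}^2$; the first term $\int_0^\infty 2\rho u(\rho)^2\,d\rho$ I would control by Cauchy--Schwarz against $\int_0^\infty \rho^2 u(\rho)^2\,d\rho$ and $\int_0^\infty u(\rho)^2\,d\rho$ — but the latter is not obviously finite from the $H^1(\R^3)$ norm alone, which is precisely the subtle point (see below). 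Taking the supremum over $r$ then yields $\sup_{r\geq 0}|ru(r)|\leq C\|u\|_{H^1(\R^3)}$ on the dense class, and a density argument extends it to all radial $u\in H^1(\R^3)$: if $u_k\to u$ in $H^1$ with $u_k$ radial and smooth, then $(r u_k)$ is Cauchy in $L^\infty(0,\infty)$, hence converges uniformly to a continuous limit which must agree a.e.\ with $ru(r)$.

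**Deducing the remaining claims.** Once $M:=\sup_{r\geq 0}|ru(r)|<\infty$ is known, the statements $ru(r)\in L^p(0,\infty)$ for $p\in[2,\infty]$ follow by interpolation: $ru(r)\in L^\infty$ is the bound just proved, and $ru(r)\in L^2(0,\infty)$ holds because $\int_0^\infty r^2 u(r)^2\,dr = \frac1{4\pi}\|u\|_{L^2(\R^3)}^2<\infty$; then $\|ru\|_{L^p}\leq \|ru\|_{L^2}^{2/p}\|ru\|_{L^\infty}^{1-2/p}$ for $2\le p\le\infty$. For $u(r)\in L^2(0,\infty)$, write $\int_0^\infty u(r)^2\,dr = \int_0^1 u(r)^2\,dr + \int_1^\infty u(r)^2\,dr$; on $[1,\infty)$ one has $u(r)^2\leq r^2 u(r)^2$ which is integrable, and on $[0,1]$ one has $u(r)^2 = (ru(r))^2/r^2 \le M^2/r^2$, which is \emph{not} integrable near $0$ — so this naive splitting fails and one instead bounds $\int_0^1 u(r)^2\,dr \le \int_0^1 \frac{1}{4\pi}\cdot\frac{4\pi r^2 u(r)^2}{r^2}\,dr$; better, simply note $\int_0^1 u(r)^2\,dr\le \int_0^1 r^2u(r)^2\,dr\cdot\sup_{r\in(0,1)}r^{-2}$ also fails. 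The correct argument is: near the origin use the pointwise bound $u(r)^2\le 2\int_0^\infty(\rho\,u^2+\rho^2 u^2+\rho^2 u'^2)\rho^{-2}\,d\rho$—this is again the genuine obstacle. Cleanly, one proves $u(r)\to 0$ as $r\to\infty$ and estimates $\|u\|_{L^2(0,\infty)}$ by combining Hardy's inequality in $\R^3$ (which gives $\||x|^{-1}u\|_{L^2(\R^3)}\lesssim\|\nabla u\|_{L^2}$, i.e.\ $\int_0^\infty u(r)^2\,dr\lesssim\|\nabla u\|_{L^2(\R^3)}^2$) with the already-established facts.

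**Main obstacle.** The delicate point throughout is the behavior near $r=0$: the $H^1(\R^3)$ norm does not directly control $\int_0^\infty u(r)^2\,dr$ without invoking Hardy's inequality, and all the integration-by-parts manipulations must be justified on a dense class first and then passed to the limit; I would therefore organize the proof so that the pointwise bound and the $L^2(0,\infty)$ bound both rest on Hardy's inequality (or on the sharp radial computation above), and treat everything else as interpolation and density.
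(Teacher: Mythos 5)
Your proposal is correct and, once the false starts are stripped away, rests on exactly the same two ingredients as the paper's proof: Hardy's inequality in $\R^3$ to get $u(r)\in L^2(0,\infty)$ (the genuinely nontrivial point near $r=0$, which you correctly identify), and one-dimensional control of $ru(r)$ plus $L^2$--$L^\infty$ interpolation for the rest. The only cosmetic difference is that you obtain the sup bound by a direct fundamental-theorem-of-calculus computation on $r^2u(r)^2$ over a dense class, whereas the paper notes $(ru)'=u+ru_r\in L^2(0,\infty)$ and quotes the Sobolev embedding $H^1(0,\infty)\hookrightarrow C\cap L^\infty$ --- the same argument packaged differently.
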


\begin{proof}
Since $u \in H^1$, from Hardy's inequality we have
\[
4\pi \int_0^\infty u^2(r)dr = \left\|\dfrac{u}{|x|}\right \|^2_{L^2(\R^3)} \leq 4\|\nabla u\|^2_{L^2(\R^3)} = 16\pi \int_0^\infty r^2u_r^2(r)dr.
\]
Hence $u(r) \in L^2(0,\infty)$ and one has the inequality
\[
 \int_0^\infty u^2(r)dr  \leq 4\int_0^\infty r^2u_r^2(r)dr.
\] 
In order to prove that $ru(r) \in L^\infty(0,\infty)$, notice first that $(ru(r))' = u(r)+ru_r(r)$ and since $\phi \in H^1(\R^3)$ from the previous part we have that $ru(r) \in H^1(0,\infty)$. By the Sobolev's embedding we conclude that $ru(r)$ is continuous and bounded. Using interpolation between $L^2$ and $L^\infty$, we conclude $ru(r) \in L^p(0,\infty)$ for all $p \in [2,\infty]$. Moreover, we have from Sobolev's embedding that
\begin{align*}
\sup_{r \geq 0}|ru(r)| \leq &~{} C\|ru(r)\|_{H^1(0,\infty)} \\
\leq &~{}  C(\|ru(r)\|_{L^2(0,\infty)}+\|u+ru_r\|_{L^2(0,\infty)}) \\
\leq &~{}  C(\|u\|_{L^2(\R^3)}+2\|u_r\|_{L^2(\R^3)}) \leq  2C\|u\|_{H^1(\R^3)},
\end{align*}
proving the desired estimate.
\end{proof}

\subsection{Estimates for the linear wave equation} Consider the linear equation associated with \eqref{eq:1} 

\begin{align}\label{eq: LW}
\partial_t^2\phi+3H\partial_t \phi-e^{-2Ht}\Delta \phi = f(t,x)
\end{align}
We can define some energy associated with this equation as \[
E(t) = \int \dfrac{\phi_t^2}{2}+\dfrac{|\nabla \phi|^2}{2e^{2Ht}}
\]To prove global existence for the nonlinear equation \eqref{eq:1} we shall need the following estimate.

\begin{lemma}\label{lema: LW}
Let $T>0$. If $f \in L^1([0,T],L^2(\R^3))$ then there exist $c \in \R$ such that every solution $(\phi, \phi_t) \in H^1 \times L^2$ satisfies 
\begin{align*}
E(t) \leq e^{ct}E(0)+\int_0^te^{c(t-s)}\|f(s)\|_{L^2}^2ds
\end{align*}
\end{lemma}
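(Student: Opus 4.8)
The plan is to establish the energy inequality via a standard differentiation-of-energy argument combined with Gr\"onwall's inequality. First I would compute the time derivative of $E(t)$ along a solution of \eqref{eq: LW}. Multiplying the equation by $\phi_t$ and integrating over $\R^3$ gives, after integrating by parts in the spatial Laplacian term,
\begin{equation*}
\frac{d}{dt}\int \frac{\phi_t^2}{2} + 3H\int \phi_t^2 + e^{-2Ht}\int \nabla\phi\cdot\nabla\phi_t = \int f\,\phi_t.
\end{equation*}
Next I would recognize that $e^{-2Ht}\int\nabla\phi\cdot\nabla\phi_t = \frac{d}{dt}\Big(\frac{e^{-2Ht}}{2}\int|\nabla\phi|^2\Big) + H e^{-2Ht}\int|\nabla\phi|^2$, so that
\begin{equation*}
\frac{d}{dt}E(t) = -3H\int\phi_t^2 - He^{-2Ht}\int|\nabla\phi|^2 + \int f\,\phi_t \le \int f\,\phi_t,
\end{equation*}
using $H\ge 0$; this reproduces the dissipation identity \eqref{Energy}.

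\textbf{Estimating the forcing term.} The source term is then bounded by Cauchy--Schwarz and Young's inequality: $\int f\,\phi_t \le \|f(t)\|_{L^2}\|\phi_t(t)\|_{L^2} \le \frac12\|f(t)\|_{L^2}^2 + \frac12\|\phi_t(t)\|_{L^2}^2 \le \frac12\|f(t)\|_{L^2}^2 + E(t)$, since $\|\phi_t\|_{L^2}^2 \le 2E(t)$. This yields the differential inequality $\frac{d}{dt}E(t) \le E(t) + \frac12\|f(t)\|_{L^2}^2$. A direct application of Gr\"onwall's lemma (in integral form) then gives
\begin{equation*}
E(t) \le e^{t}E(0) + \int_0^t e^{t-s}\,\tfrac12\|f(s)\|_{L^2}^2\,ds,
\end{equation*}
which is the claimed bound with $c=1$ (one may of course absorb the factor $\frac12$ or adjust constants; the statement only asks for the existence of some $c\in\R$). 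Choosing $c=1$ suffices, and in fact $c$ can be taken independent of $T$.

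\textbf{Justification of the formal computation.} The only genuine subtlety — and the step I expect to require the most care — is that the energy identity above is only formal for general solutions $(\phi,\phi_t)\in H^1\times L^2$, since $\phi_t$ need not lie in $H^1$ and the pairing $\int f\phi_t$ and the integrations by parts need justification at low regularity. The standard remedy is to prove the estimate first for smooth solutions (e.g.\ data in $H^2\times H^1$ with $f$ smooth), where all manipulations are rigorous, and then pass to the limit using the density of smooth data and the continuous dependence of the linear flow on the data and forcing in the energy norm; alternatively one works with a Friedrichs mollification or a duality/weak formulation argument. Since Lemma \ref{lema: LW} is only invoked as an a priori bound en route to global existence for \eqref{eq:1}, the cleanest exposition is to state it for solutions obtained as limits of smooth approximations, for which the inequality is inherited by weak lower semicontinuity of $E(t)$ and strong convergence of the right-hand side in $L^1_tL^2_x$.
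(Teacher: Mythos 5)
Your proof is correct and follows essentially the same route as the paper: multiply \eqref{eq: LW} by $\phi_t$, integrate to get the dissipation identity, bound $\int f\phi_t$ by Young's inequality, and close with Gr\"onwall. The only (harmless) difference is that you discard the dissipative terms outright to take $c=1$, whereas the paper keeps them so as to observe later that $c$ can be taken nonpositive when $H\geq 1/6$; your additional remarks on justifying the formal computation at $H^1\times L^2$ regularity are sound and go beyond what the paper records.
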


\begin{proof}
Notice that multiplying the equation \eqref{eq: LW} by $\phi_t$ we obtain that \[
\partial_t\left(\dfrac{\phi_t^2}{2}+\dfrac{|\nabla \phi|^2}{2e^{2Ht}}\right)-\text{div}\left(\dfrac{\phi_t\nabla \phi}{e^{2Ht}}\right)+3H\phi_t^2+H\dfrac{|\nabla \phi|^2}{e^{2Ht}} = f(t,x)\phi_t
\]Integrating on $\R^3$ we get 
\begin{align*}
\partial_tE(t) =& -H\int 3\phi_t^2+\dfrac{|\nabla \phi|^2}{e^{2Ht}}+\int f(t,x)\phi_t \\
\leq & -H\int 3\phi_t^2+\dfrac{|\nabla \phi|^2}{e^{2Ht}}+\int \dfrac{\phi_t^2}{2}+\int \dfrac{f(t,x)^2}{2} \\
\leq & cE(t)+\dfrac{\|f(t)\|_{L^2}^2}{2}
\end{align*}
Applying Gronwall's inequality we finish the proof. 
\end{proof}

\begin{remark}
Notice that when $H \geq \frac{1}{6}$ we see in the proof that the constant could be chosen $c \leq 0$, which would give us more control on the size of the solution.
\end{remark}
\subsection{Local and global existence} Finally, we recall the following results.

\medskip 

This first result deals with global solutions in the energy space in the case where the nonlinearity is Lipschitz. 

\begin{theorem}[\cite{Evans}]\label{thm:global}
If $f: \R \to \R$ is Lipschitz continuous and $f(0) = 0$ then for initial data in $H^1(\R^n) \times L^2(\R^n)$ equation (\ref{eq:1}) has a unique global solution such that
\begin{enumerate}
\item $\phi \in C([0,\infty); H^1(\R^n)) \cap C^1([0,\infty); L^2(\R^n)).$
\item If the initial datum $(\phi_0,\phi_1)\in H^1\times L^2$ is radial, then $(\phi,\partial_t \phi)$ is radial for all times.
\item Equation \eqref{Energy} is satisfied for all times $t \geq 0$.
\end{enumerate}
\end{theorem}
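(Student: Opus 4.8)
The plan is to view \eqref{eq:1} as a semilinear perturbation of the linear equation whose energy inequality is already supplied by Lemma \ref{lema: LW}, and to run the standard contraction scheme (which for the $H=0$ case is precisely the argument in \cite{Evans}, adapting verbatim to $H>0$). I would first rewrite \eqref{eq:1} as the first order system $\partial_t(\phi,\psi)=A(t)(\phi,\psi)+(0,-f(\phi))$, with $A(t)(\phi,\psi)=(\psi,\, e^{-2Ht}\Delta\phi-3H\psi)$, and let $U(t,s)$ be the associated evolution family on $H^1(\R^n)\times L^2(\R^n)$; its existence and strong continuity are classical, the coefficient $e^{-2Ht}$ being smooth, spatially constant, and bounded on compact time intervals. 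A solution of \eqref{eq:1} is then a fixed point of
\[
\Phi(\phi)(t) = U(t,0)(\phi_0,\phi_1) - \int_0^t U(t,s)\big(0,\,f(\phi(s))\big)\,ds .
\]

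Since $f$ is globally Lipschitz with $f(0)=0$, we have $\|f(\phi)\|_{L^2}\le L\|\phi\|_{L^2}$ and $\|f(\phi)-f(\psi)\|_{L^2}\le L\|\phi-\psi\|_{L^2}$. Combining this with a variant of Lemma \ref{lema: LW} linear in $f$ — obtained by estimating $\int f\phi_t\le\|f\|_{L^2}\sqrt{2E(t)}$ rather than completing the square, so that $\sqrt{E(t)}\le e^{ct}\sqrt{E(0)}+\int_0^t e^{c(t-s)}\|f(s)\|_{L^2}\,ds$ — one checks that $\Phi$ maps a suitable ball of $C([0,T];H^1\times L^2)$ into itself and is a contraction there for some $T>0$ depending only on $L$ and $H$, \emph{not} on the size of the data. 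This yields local existence and uniqueness; because $T$ is uniform, iteration produces a solution on $[0,\infty)$. Equivalently one may close an a priori estimate for $Y(t):=\sqrt{E(t)}+\|\phi(t)\|_{L^2}$, which satisfies $Y'\le C(L,H)\,Y$ by Lemma \ref{lema: LW} together with $\tfrac{d}{dt}\|\phi\|_{L^2}\le\|\phi_t\|_{L^2}\le\sqrt{2E(t)}$, hence stays finite on every bounded interval by Gronwall and rules out blow-up. Item (1) then follows from the Duhamel formula and the strong continuity of $U(\cdot,\cdot)$.

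For item (2) I would invoke rotational invariance: $e^{-2Ht}\Delta$ commutes with every $\rho\in SO(n)$ and the remaining terms of \eqref{eq:1} carry no spatial derivatives, so $\phi(t,\rho\,\cdot)$ solves \eqref{eq:1} with data $(\phi_0\circ\rho,\phi_1\circ\rho)$; when the data are radial these coincide with $(\phi_0,\phi_1)$, and the uniqueness just established forces $\phi(t,\rho x)=\phi(t,x)$ for all $\rho$ and $t$, i.e. $\phi(t,\cdot)$ is radial. For item (3) I would first establish \eqref{Energy} for smooth, spatially decaying data, where multiplying \eqref{eq:1} by $\phi_t$ and integrating by parts over $\R^n$ is legitimate and produces exactly the two displayed identities (note $\int F(\phi)<\infty$ since $|F(\phi)|\le\tfrac{L}{2}\phi^2$). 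Then I would pass to general $(\phi_0,\phi_1)\in H^1\times L^2$ by density, using the continuous dependence of the solution map built into the contraction estimate: both sides of the integrated form $E[\phi,\phi_t](t)-E[\phi,\phi_t](0)=-H\int_0^t\!\int\big(3\phi_t^2+e^{-2Hs}|\nabla\phi|^2\big)$ are continuous in the $C([0,T];H^1\times L^2)$ topology — for the potential term one uses $\int|F(\phi)-F(\psi)|\le\tfrac{L}{2}(\|\phi\|_{L^2}+\|\psi\|_{L^2})\|\phi-\psi\|_{L^2}$ — so the identity survives the limit.

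The only genuinely delicate point is this last passage: the formal multiplier computation leading to \eqref{Energy} is not a priori licit for merely energy-class solutions, so it must be routed through smooth approximation and continuous dependence; the construction of the evolution family $U(t,s)$ for the time-dependent principal part likewise requires a little care, though it is entirely classical. Everything else is the textbook global-Lipschitz fixed-point scheme, with no criticality to worry about precisely because the nonlinearity is controlled at the $L^2$ level.
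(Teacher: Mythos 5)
Your argument is essentially the same as the paper's (Appendix \ref{A}): a Banach fixed-point scheme for the Duhamel formulation on $C([0,T];H^1)\cap C^1([0,T];L^2)$, exploiting that the global Lipschitz bound $\|f(\phi)-f(\psi)\|_{L^2}\le L\|\phi-\psi\|_{L^2}$ makes the contraction time independent of the data size, so the solution extends globally; your evolution family $U(t,s)$ is just the solution operator whose estimate the paper imports from \cite{Sogge}, and your Gronwall a priori bound is an equivalent way to rule out blow-up. You additionally spell out items (2) and (3) (rotational invariance plus uniqueness, and the density/continuous-dependence justification of \eqref{Energy}), which the paper's appendix leaves implicit; these additions are correct and standard.
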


The following result follows closely Theorem \ref{thm:global}, but deals with globally defined small solutions in the case where the nonlinearity may have growth.  

\begin{theorem}\label{thm:local} The following holds:
\item If $f: \R \to \R$ is of class $C^2$ and $f(0) = f'(0) = 0$, then equation (\ref{eq:1}) has a unique maximal solution provided that the initial data $(\phi_0,\phi_1) \in H^2 \times H^1$ is small enough. This solution satisfies
\begin{enumerate}
\item $\phi \in C([0,T); H^2(\R^3)) \cap C^1([0,T); H^1(\R^1)).$
\item If the initial datum $(\phi_0,\phi_1)\in H^2\times H^1$ is radial, then $(\phi,\partial_t \phi)$ is radial for all times.
\item Equation \eqref{Energy} is satisfied along $0\leq t < T$.
\end{enumerate}
\end{theorem}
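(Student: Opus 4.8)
The plan is to construct the local solution of \eqref{eq:1} by a Banach fixed point (Picard) argument in the energy--regular space $X_T:=C([0,T];H^2(\R^3))\cap C^1([0,T];H^1(\R^3))$ for small $T>0$, using crucially that $H^2(\R^3)\hookrightarrow L^\infty(\R^3)$ in dimension $3$. For $\phi\in X_T$ I would let $\Phi(\phi)=\psi$ solve the \emph{linear} inhomogeneous problem $\partial_t^2\psi+3H\partial_t\psi-e^{-2Ht}\Delta\psi=-f(\phi)$ with data $(\phi_0,\phi_1)$. Since $e^{-2Ht}$ is spatially constant, differentiating this equation in $x$ leaves its structure unchanged, so applying an energy estimate of the type of Lemma~\ref{lema: LW} to $\psi$ and to each $\partial_{x_j}\psi$ --- and using that on $[0,T]$ the coefficient $e^{-2Ht}$ is bounded above and below, hence the weighted gradient norm is comparable to $\|\nabla\,\cdot\,\|_{L^2}$, while $\psi(t)=\psi(0)+\int_0^t\psi_t$ recovers the undifferentiated $L^2$ norms --- one obtains (schematically, with $C=C(H,T)$)
\[
\sup_{0\le t\le T}\big(\|\psi(t)\|_{H^2}+\|\partial_t\psi(t)\|_{H^1}\big)\ \le\ Ce^{CT}\Big(\|(\phi_0,\phi_1)\|_{H^2\times H^1}+\int_0^T\|f(\phi(s))\|_{H^1}\,ds\Big).
\]

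The heart of the matter, and the step I expect to be the main (albeit mild) obstacle, is controlling the nonlinearity: $f$ is only $C^2$ and may grow at infinity, so no global Lipschitz bound is available. The fix is to localize. Working in the ball $B_R\subset X_T$ of radius $R$ around $0$, any $\phi\in B_R$ satisfies $\|\phi(t)\|_{L^\infty}\le C_SR=:\delta_0$, so $f(\phi(t))$ only sees $f$ on $[-\delta_0,\delta_0]$, on which $M_0:=\|f\|_{C^2([-\delta_0,\delta_0])}<\infty$. From $f(0)=f'(0)=0$ one has $|f(s)|\lesssim M_0s^2$, $|f'(s)|\lesssim M_0|s|$, with $f'$ being $M_0$-Lipschitz there; using $\nabla f(\phi)=f'(\phi)\nabla\phi$ this gives $\|f(\phi)\|_{H^1}\lesssim M_0\|\phi\|_{L^\infty}\|\phi\|_{H^1}\lesssim M_0R^2$, and for $\phi_1,\phi_2\in B_R$ the difference bound $\|f(\phi_1)-f(\phi_2)\|_{H^1}\lesssim M_0R\,\|\phi_1-\phi_2\|_{H^2}$, where the $C^2$ hypothesis is used exactly to estimate $(f'(\phi_1)-f'(\phi_2))\nabla\phi_2$ in $L^2$ by $M_0\|\phi_1-\phi_2\|_{L^\infty}\|\nabla\phi_2\|_{L^2}$.

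To close, I would take $R:=2C\|(\phi_0,\phi_1)\|_{H^2\times H^1}$; since $\delta_0=C_SR\to0$ as the data shrink, $M_0$ stays uniformly bounded, so the two estimates give $\|\Phi(\phi)\|_{X_T}\le Ce^{CT}(\tfrac{R}{2C}+CTM_0R^2)$ and $\|\Phi(\phi_1)-\Phi(\phi_2)\|_{X_T}\le Ce^{CT}CTM_0R\,\|\phi_1-\phi_2\|_{X_T}$. Fixing $T$ small first and then the data small makes $\Phi$ a contraction of $B_R$ into itself; its unique fixed point is the desired solution, and iterating the construction from later times (which works for any $H^2\times H^1$ datum, with the existence time controlled by its norm) yields a maximal solution on some $[0,T_{\max})$. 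Uniqueness on the common interval of two solutions follows from a Gronwall estimate for their difference.

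Finally I would check items (1)--(3). Continuity in $t$ into $H^2$ and $H^1$ is built into $X_T$. For radial symmetry: rotations commute with $\Delta$, with $\partial_t$, and with multiplication by the spatially constant $e^{-2Ht}$, and $f(\phi)$ is radial when $\phi$ is, so $\Phi$ preserves radial symmetry; since the fixed point is the $X_T$-limit of the Picard iterates started at the (radial) linear evolution of $(\phi_0,\phi_1)$, it is radial for all $t$. For the energy relation: at this regularity the equation forces $\partial_t^2\phi\in C([0,T_{\max});L^2)$, so pairing \eqref{eq:1} with $\phi_t$ and integrating by parts on $\R^3$ is rigorous and reproduces \eqref{Energy}. (An alternative is to truncate $f$ outside $[-\delta_0,\delta_0]$ to a globally Lipschitz $\tilde f$, apply Theorem~\ref{thm:global} plus a propagation-of-$H^2$-regularity argument, and note that small data keeps $\|\phi\|_{L^\infty}\le\delta_0$ long enough that $\phi$ also solves the original equation; the direct contraction seems cleaner.)
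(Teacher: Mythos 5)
Your proposal is correct and follows essentially the same route as the paper's Appendix~\ref{A}: a Banach fixed point argument in $C([0,T];H^2)\cap C^1([0,T];H^1)$, using the Sobolev embedding $H^2(\R^3)\hookrightarrow L^\infty$ to reduce the $C^2$ nonlinearity to a locally Lipschitz one on a bounded range, together with the linear inhomogeneous energy estimate. The only cosmetic differences are that you derive the linear estimate by differentiating the equation and invoking Lemma~\ref{lema: LW} where the paper cites \cite{Sogge}, and that you spell out the verification of items (2) and (3), which the paper leaves implicit.
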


\begin{proof}
This result is standard, but for completeness we include a sketch of proof in Appendix \ref{A}.
\end{proof} 

\begin{theorem}[\cite{Sogge}]\label{thm: Global 2} 

Suppose that $f: \R \to \R$ is of class $C^2$, $f(0) = f'(0) = 0$ and there exists $C >0$ such that $$
|f''(s)| \leq C|s|^{p}, \quad |s| \leq 1
$$for $p \geq 1$. Then for smooth, compactly supported and small enough initial data, the equation \eqref{eq:1} for $H = 0$ has a unique global solution $\phi \in C^2(\R^{1+3})$.

\end{theorem}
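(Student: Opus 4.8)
The plan is to reduce the statement to an a priori smallness estimate and then close it by a continuity argument; the structural input that makes this possible is that $f(0)=f'(0)=0$ together with $|f''(s)|\le C|s|^{p}$ force, via Taylor's theorem, $|f'(s)|\lesssim|s|^{p+1}$ and $|f(s)|\lesssim|s|^{p+2}$ for $|s|\le1$, so that the nonlinearity vanishes at the origin to order $p+2\ge3$ --- strictly above the three dimensional Strauss exponent $1+\sqrt2$, and at least the conformal power $\tfrac{n+3}{n-1}=3$ in dimension $n=3$. As a first step I would modify $f$ away from the origin: replace it by a $C^\infty$, globally Lipschitz function $\tilde f$ agreeing with $f$ on $[-1,1]$ (truncate so that $f$ becomes affine for large argument, then mollify). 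By Theorem \ref{thm:global}, applied to $\tilde f$, the modified equation has a global solution, which for smooth compactly supported data is smooth, stays supported in $\{|x|\le t+R\}$ by finite speed of propagation, and is $C^2$ already by the local theory for the original $C^2$ nonlinearity. It then suffices to prove that this solution remains so small that $\|\phi(t)\|_{L^\infty}<1$ for all $t$, for then it solves the original equation globally.

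For the a priori bound I would use Klainerman's vector-field method, keeping the estimates at the lowest order compatible with the hypothesis $f\in C^2$. Let $Z$ range over $\partial_t,\partial_{x_i}$, the rotations $\Omega_{ij}=x_i\partial_{x_j}-x_j\partial_{x_i}$, the boosts $\Omega_{0i}=x_i\partial_t+t\partial_{x_i}$, and the scaling field $S=t\partial_t+x\cdot\nabla$, which commute with $\partial_t^2-\Delta$ up to the harmless relation $[\partial_t^2-\Delta,S]=2(\partial_t^2-\Delta)$, and set $M(t):=\sum_{|\alpha|\le2}\|\partial_{t,x}Z^\alpha\phi(t)\|_{L^2}$. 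Two ingredients are needed. First, commuting $Z^\alpha$ through the equation and using the standard energy inequality for $\partial_t^2-\Delta$ gives $\tfrac{d}{dt}M(t)\lesssim\sum_{|\alpha|\le2}\|Z^\alpha(f(\phi))(t)\|_{L^2}$, and for $|\alpha|\le2$ the chain rule produces only the terms $f'(\phi)\,Z^\alpha\phi$ and $f''(\phi)\,(Z\phi)(Z\phi)$ --- precisely the derivatives of $f$ that are available, and with bounds insensitive to the modification made above. Second, the weighted Klainerman--Sobolev inequality applied to $\partial\phi$ gives the pointwise decay $|\partial\phi(t,x)|\lesssim(1+t+|x|)^{-1}(1+|t-|x||)^{-1/2}M(t)$; integrating this along radial rays and using that $\phi(t,\cdot)$ vanishes for $|x|\ge t+R$ transfers it to a decay estimate for $\phi$ itself, while Hardy's inequality together with the support restriction controls the (at most polynomially growing) $L^2$-norms of $Z^\alpha\phi$ in terms of $M(t)$. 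Inserting these into the first step, and using $|f'(\phi)|\lesssim|\phi|^{p+1}$, $|f''(\phi)|\lesssim|\phi|^{p}$, one bounds the source $\sum_{|\alpha|\le2}\|Z^\alpha(f(\phi))(t)\|_{L^2}$ by a time-integrable multiple of $M(t)$, because the gain $p+2\ge3$ beats the polynomial losses coming from the compact support.

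A standard bootstrap then closes the estimate: assuming $M(t)\le2C_0\varepsilon$ on a maximal subinterval with $M(0)\le C_0\varepsilon$, the integrated energy inequality and Gronwall's lemma yield $M(t)\le C_0\varepsilon\exp(C\varepsilon^{p+1})<2C_0\varepsilon$ for $\varepsilon$ small; hence $M$ is globally bounded, in particular $\|\phi(t)\|_{L^\infty}<1$ for all $t$, so $\phi$ is a global solution of the original equation, which is $C^2(\R^{1+3})$ by propagation of regularity.

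I expect the \textbf{main obstacle} to be matching the scarce regularity of $f$ --- only $C^2$ --- to the number of commuted vector fields one may afford: the energy must be truncated at order two, so one may never appeal to an $L^\infty$ bound on a first- or higher-order vector-field derivative of $\phi$, which would force a third commutator and hence $f\in C^3$. This is feasible only because the nonlinearity $f(\phi)$ carries no derivatives of $\phi$ and $p+2\ge3$, so that the order-two energy, reinforced by finite speed of propagation, Hardy's inequality, and the ray-integration that turns decay of $\partial\phi$ into decay of $\phi$, is just strong enough. The genuinely delicate case is the endpoint $p=1$, i.e.\ $f(\phi)\sim\phi^3$, which is conformally critical in three space dimensions: there the crude estimates barely miss time-integrability, and one must retain the light-cone weight $(1+|t-|x||)^{1/2}$ throughout the quadratic term $f''(\phi)(Z\phi)^2$ --- or, alternatively, run the order-two energy estimate in parallel with the conformal (Morawetz) identity generated by $K_0=(t^2+|x|^2)\partial_t+2t\,x\cdot\nabla$, whose error is a time-weighted integral of a combination of $F(\phi)$ and $\phi f(\phi)$ that degenerates to an exact conservation law at the cubic power.
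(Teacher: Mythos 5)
You should first note that the paper itself contains no proof of this statement: Theorem \ref{thm: Global 2} is quoted from Sogge \cite{Sogge}, where it is obtained by John--type pointwise/weighted estimates valid in the whole Strauss-supercritical range, so your vector-field sketch is an independent route and has to stand on its own. It does not close as written, and the gap sits exactly at the step you declare to be the point of the argument: that $\sum_{|\alpha|\le 2}\|Z^\alpha f(\phi)(t)\|_{L^2}$ is a time-integrable multiple of $M(t)$. With only two commutations (forced, as you observe, by $f\in C^2$) the quadratic term $f''(\phi)(Z\phi)(Z\phi)$ cannot be handled by putting a factor $Z\phi$ in $L^\infty$, since Klainerman--Sobolev for $Z\phi$ costs third-order vector-field norms; the available substitute, Gagliardo--Nirenberg $\|Z\phi\|_{L^4}^2\lesssim \|Z\phi\|_{L^2}^{1/2}\|\partial Z\phi\|_{L^2}^{3/2}$ combined with your Hardy-plus-support bound $\|Z\phi\|_{L^2}\lesssim (1+t)M(t)$, only gives $\|f''(\phi)(Z\phi)^2\|_{L^2}\lesssim \|\phi\|_{L^\infty}^{p}(1+t)^{1/2}M(t)^2$. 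Moreover, the ray integration you invoke yields only $\|\phi(t)\|_{L^\infty}\lesssim M(t)(1+t)^{-1/2}$, not $(1+t)^{-1}$, because the factor $(1+|t-r|)^{-1/2}$ integrates to $(1+t)^{1/2}$ across the support. Putting these together, the source term is of size $M^{p+2}(1+t)^{(1-p)/2}$: this is not integrable for any $p\le 3$ and has no decay at all at $p=1$, which is precisely the case the theorem is used for in this paper (cubic-type $f$, e.g.\ the $E$ and $T$ models with $n\ge 2$, where $f(\phi)\sim \phi^3$). So the claim that ``the gain $p+2\ge 3$ beats the polynomial losses'' is unsubstantiated; to rescue the scheme one needs the finer null decomposition $|Z\phi|\lesssim (1+|t-r|)\,|\partial\phi|+(1+t+r)\,|\bar\partial\phi|$ with control of the good (cone-tangential) derivatives, or the conformal/Morawetz energy you mention for the endpoint --- but neither is carried out, and the endpoint $p=1$ is the heart of the statement, not a peripheral refinement.

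Two smaller points also need repair, though they are fixable. A $C^\infty$ function cannot agree on all of $[-1,1]$ with an $f$ that is merely $C^2$; you should take $\tilde f\in C^2$, globally Lipschitz with bounded $f''$, equal to $f$ on a smaller interval, and ensure the a priori bound keeps $\|\phi\|_{L^\infty}$ inside that interval. And passing from the $H^1\times L^2$ global theory of Theorem \ref{thm:global} to a solution in $C^2(\R^{1+3})$ is not automatic when $f$ is only $C^2$: energy methods propagate at most roughly $H^3$ regularity here, and the classical regularity asserted in the statement is really obtained by the classical iteration for $C^2$ nonlinearities (as in \cite{Sogge}), which should be invoked explicitly rather than attributed to ``the local theory.''
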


\section{Virial identities}\label{viriales}

Now we prove some virial estimates needed for the proof of our main results. These are similar to the ones proved in Kowalczyk et al. \cite{KMM1}, and more precisely Alejo and Maul\'en \cite{AleMau}, but important differences appear in our case, where we use some particular weights that make virials completely defocusing, in a sense to be described below.

\subsection{First computations} 

For locally integrable functions $\psi(r),\varphi(t,r)$ to be chosen later let 
\begin{align}
\mathcal{P}(\phi )(t) &= \int_0^\infty \psi (r)\phi _r (t,r)\phi _t(t,r)dr,  \label{eq: virial_P} \\
\mathcal{R}(\phi )(t) &= \int_0^\infty \psi' (r) \phi(t,r) \phi _t(t,r)dr,  \label{eq: virial_R} \\
\mathcal I(\phi) (t)&= \mathcal{P}(\phi )(t)+\dfrac{1}{2}\mathcal{R}(\phi )(t), \label{eq: virial_I} \\ 
\mathcal{J}(\phi)(t) &= \int_0^\infty r^2\varphi(t,r)\left(\dfrac{\phi_t^2}{2}+\dfrac{\phi_r^2}{2e^{2Ht}}+F(\phi) \right). \label{eq: virial_J}
\end{align}

%
This functional has already been used in \cite{KMM1} and \cite{AleMau}. However, with respect to these previous references, we introduced new weights that are better adapted to the cosmological setting. In particular, our weights will also work for large data.

\medskip

The following is a standard but key computation:

\begin{lemma}\label{lem:virial1}
If $(\phi,\phi_t) \in H^1\times L^2$ is a radial solution of \eqref{eq:1} with $H = 0$ then 
\begin{align}
\dfrac{d\mathcal{P}(\phi)}{dt} =&~{} \int_0^\infty \dfrac{2\psi}{r}\phi_r^2- \int_0^\infty\psi'\left(\dfrac{\phi_t^2}{2}+\dfrac{\phi_r^2}{2}-F(\phi)\right), \label{dPdt}\\
\dfrac{d\mathcal{R}(\phi)}{dt} =&~{} \int_0^\infty \psi' \left(\phi_t^2-\phi_r^2  -\phi f(\phi) \right)   +\int_0^\infty \left(\dfrac{\psi'}{r^2}-\dfrac{\psi''}{r}+\dfrac{\psi'''}{2}\right) \phi^2 ,\label{dRdt}
\end{align}
and
\begin{equation}
\dfrac{d\mathcal{I}(\phi)}{dt} =\int_0^\infty \left( \left(\dfrac{\psi'}{r^2}-\dfrac{\psi''}{r}+\dfrac{\psi'''}{2}\right)\phi^2+\left(\dfrac{2\psi}{r}-\psi'\right)\phi_r^2+\dfrac{\psi'}{2}(2F(\phi)-\phi f(\phi)) \right). \label{dIdt}
\end{equation}
\end{lemma}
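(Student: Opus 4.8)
The plan is to establish \eqref{dPdt} and \eqref{dRdt} first for smooth solutions with data in $C_0^\infty(\R^3)$ — where differentiation under the integral sign and integration by parts in $r$ are unconditionally valid — and then to upgrade to arbitrary radial $(\phi,\phi_t)\in H^1\times L^2$ by density, using the continuity of the flow in the energy topology (Theorem \ref{thm:global}) together with Lemma \ref{Sob_radial} and the local integrability of $\psi,\psi',\psi'',\psi'''$ to see that every integral appearing in the identities depends continuously on $(\phi,\phi_t)$. The identity \eqref{dIdt} will then follow from \eqref{dPdt} and \eqref{dRdt} by a purely algebraic combination. Throughout, the only property of the PDE I use is its radial form with $H=0$, namely $\phi_{tt}=\phi_{rr}+\tfrac{2}{r}\phi_r-f(\phi)$.

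For $\tfrac{d}{dt}\mathcal P$ I would differentiate \eqref{eq: virial_P}, replace $\phi_{tt}$ by the right-hand side of the equation, and rewrite the three resulting terms using $\phi_{rt}\phi_t=\tfrac12\partial_r(\phi_t^2)$, $\phi_r\phi_{rr}=\tfrac12\partial_r(\phi_r^2)$ and $\phi_r f(\phi)=\partial_r F(\phi)$; a single integration by parts in $r$ on each then yields \eqref{dPdt}, the term $\int_0^\infty\tfrac{2\psi}{r}\phi_r^2$ being produced directly by the $\tfrac{2}{r}\phi_r$ factor without any integration by parts. For $\tfrac{d}{dt}\mathcal R$ the structure is the same, but one integration by parts is not enough: after substituting $\phi_{tt}$ in \eqref{eq: virial_R} one integrates by parts in $\int\psi'\phi\,\phi_{rr}$ and in $\int\tfrac{2\psi'}{r}\phi\phi_r=\int\tfrac{\psi'}{r}\partial_r(\phi^2)$, then integrates by parts once more in the leftover $\int\psi''\phi\phi_r=\tfrac12\int\psi''\partial_r(\phi^2)$; keeping track of the weights that are generated ($\psi'$, $\psi'''$, $\psi''/r$, $\psi'/r^2$) gives \eqref{dRdt}. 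Finally I would add $\tfrac{d}{dt}\mathcal P+\tfrac12\tfrac{d}{dt}\mathcal R$ and simplify: the $\phi_t^2$ terms cancel, the $\phi_r^2$ terms combine into $(\tfrac{2\psi}{r}-\psi')\phi_r^2$, and the potential terms into $\tfrac{\psi'}{2}(2F(\phi)-\phi f(\phi))$, which is \eqref{dIdt}.

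The step I expect to be the main obstacle is not the algebra — that is routine — but the bookkeeping of the boundary terms at $r=0$ and the justification of the limiting argument. In $3$D a radial $H^1$ function need not be bounded near the origin (Lemma \ref{Sob_radial} only gives that $r\,u(r)$ is), so one has to check that all boundary contributions produced by the integrations by parts — namely $\psi\phi_t^2,\ \psi\phi_r^2,\ \psi F(\phi),\ \psi'\phi\phi_r,\ \psi''\phi^2,\ \tfrac{\psi'}{r}\phi^2$ evaluated at $r=0$ — vanish; for $C_0^\infty$ data this is immediate since $\phi$ is smooth and even in $r$ (so $\phi_r(0)=0$), provided $\psi$ has the mild vanishing at the origin enjoyed by the weights chosen in the next subsection. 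The second delicate point is to confirm, term by term, that each integral in \eqref{dPdt}--\eqref{dIdt} is continuous along an $H^1\times L^2$-convergent sequence of solutions; here Lemma \ref{Sob_radial} (to control the weighted quadratic terms in $\phi$) and the local integrability of $\psi$ and its derivatives are exactly what is needed, after which the identities for smooth solutions pass to the limit.
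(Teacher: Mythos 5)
Your overall route is the same as the paper's: differentiate, substitute the equation, integrate by parts on $(0,\infty)$, and combine $\tfrac{d}{dt}\mathcal P+\tfrac12\tfrac{d}{dt}\mathcal R$; the algebra you describe (including the cancellation of the $\phi_t^2$ terms and the recombination of the $\phi_r^2$ and potential terms) is exactly what the paper does and is correct. The difference, and the place where your argument has a genuine gap, is the treatment of the boundary at $r=0$. You propose to work with data in $C_0^\infty(\R^3)$ and claim that all boundary contributions vanish because $\phi$ is smooth and even (so $\phi_r(0)=0$) and $\psi$ vanishes mildly at the origin. For the weight actually used in the sequel, $\psi(r)=r^2/(1+r)$, this is false: one has $\psi'(r)=r(r+2)/(1+r)^2$, hence $\psi''(0)=2$ and $\psi'(r)/r\to 2$ as $r\to 0$, so the boundary terms $\tfrac{\psi''}{2}\phi^2$ and $\tfrac{\psi'}{r}\phi^2$ generated in the computation of $K_2$ and $K_3$ take the values $\phi(0,t)^2$ and $2\phi(0,t)^2$ at the origin. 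The evenness of $\phi$ only kills the $\psi'\phi\phi_r$ term. These two surviving contributions cancel only partially, leaving a residual $-\phi(0,t)^2$ in \eqref{dRdt} (hence $-\tfrac12\phi(0,t)^2$ in \eqref{dIdt}) whenever $\phi(0,t)\neq 0$, so your computation for $C_0^\infty(\R^3)$ data does not yield the stated identities.

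The paper's device is precisely designed to avoid this: Lemma \ref{lemma: density} gives density of \emph{radial} functions in $C_0^\infty(\R^3\setminus\{0\})$, i.e., functions vanishing identically on a neighborhood of the origin, for which every boundary term at $r=0$ is zero by support considerations rather than by any vanishing of $\psi$ or its derivatives. You should replace your class $C_0^\infty(\R^3)$ by this one. A second point you should be more careful about when passing to the limit: the individual functionals appearing in \eqref{dPdt} and \eqref{dRdt}, such as $\int_0^\infty \psi'\phi_r^2$ with $\psi'\sim 2r$ near the origin, are not controlled by (nor continuous with respect to) the $H^1(\R^3)$ norm of a radial function, since radial $H^1$ only gives $\int r^2\phi_r^2\,dr<\infty$; it is only in the combination \eqref{dIdt}, where the coefficient of $\phi_r^2$ becomes $\tfrac{2\psi}{r}-\psi'=r^2/(1+r)^2=O(r^2)$ and that of $\phi^2$ becomes $r(r+4)/(1+r)^4=O(r)$, that all terms are finite and stable under $H^1\times L^2$ approximation (using Lemma \ref{Sob_radial}). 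So the limiting step you outline is sound for \eqref{dIdt} but needs to be stated with more care for the two intermediate identities.
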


\begin{proof}[Proof of Lemma \ref{lem:virial1}] Thanks to Lemma \ref{lemma: density}, it is enough to compute all derivatives assuming data in $C_0^\infty(\mathbb R^3 \backslash \{0\})$.  

\medskip

Using equation \eqref{eq:1} and the definition of $F(s)=\int_0^s f(\sigma)d\sigma$, we have in \eqref{eq: virial_P}: 
\[ \begin{aligned}
\dfrac{d\mathcal{P}(\phi)}{dt} =& \int_0^\infty \psi \left(\phi_{rt}\phi_t+\phi_r\phi_{rr}+\dfrac{2}{r}\phi_r^2-\phi_rf(\phi) \right)dr \\
=& \int_0^\infty \psi \partial_r\left(\dfrac{\phi_t^2}{2}+\dfrac{\phi_r^2}{2}- F(\phi) \right)+\dfrac{2\psi}{r}\phi_r^2 \\
=& \psi \left(\dfrac{\phi_t^2}{2}+\dfrac{\phi_r^2}{2}-F(\phi) \right)\bigg\rvert_0^\infty+\int_0^\infty \dfrac{2\psi}{r}\phi_r^2-\psi'\left(\dfrac{\phi_t^2}{2}+\dfrac{\phi_r^2}{2}-F(\phi)\right).
\end{aligned} \]
Thanks to Lemma \ref{lemma: density}, every boundary term at zero and infinity disappear. We get \eqref{dPdt}.

\medskip

We compute now $\dfrac{d\mathcal{R}(\phi)}{dt} $.  We have from \eqref{eq: virial_R}:

\[ \begin{aligned}
\dfrac{d\mathcal{R}(\phi)}{dt} =& \int_0^\infty \psi'\left(\phi_t^2+\phi\phi_{rr}+\dfrac{2}{r}\phi\phi_r-\phi f(\phi)\right) \\
=& \int_0^\infty \psi'(\phi_t^2-\phi f(\phi))+\int_0^\infty \psi'\phi\phi_{rr}+\int_0^\infty\psi'\dfrac{2}{r}\phi\phi_r = : K_1+K_2+K_3.
\end{aligned} \]

$K_1$ is left as it is. We compute $K_2$ first saving every boundary term:
\[ \begin{aligned}
K_2 =& \psi'\phi\phi_r\bigg\rvert_0^\infty - \int_0^\infty \phi_r(\psi''\phi+\psi'\phi_r) \\
=& \psi'\phi\phi_r\bigg\rvert_0^\infty - \int_0^\infty \psi'\phi_r^2 -\int_0^\infty \psi''\partial_r\left(\dfrac{\phi^2}{2} \right) \\
=& \psi'\phi\phi_r\bigg\rvert_0^\infty - \int_0^\infty \psi'\phi_r^2 - \left(\dfrac{\psi''\phi^2}{2} \bigg\rvert_0^\infty - \int_0^\infty \psi'''\dfrac{\phi^2}{2} \right) \\
=& \psi'\phi\phi_r\bigg\rvert_0^\infty - \dfrac{\psi''\phi^2}{2} \bigg\rvert_0^\infty+\int_0^\infty \psi'''\dfrac{\phi^2}{2}-\int_0^\infty \psi'\phi_r^2.
\end{aligned} \]
Similarly,
\[ \begin{aligned}
K_3 =& \int_0^\infty \dfrac{\psi'}{r}\partial_r(\phi^2) = \dfrac{\psi'}{r}\phi^2\bigg\rvert_0^\infty - \int_0^\infty \partial_r\left(\dfrac{\psi'}{r}\right)\phi^2 \\
=&  \dfrac{\psi'}{r}\phi^2\bigg\rvert_0^\infty - \int_0^\infty \left(\dfrac{\psi''}{r}-\dfrac{\psi'}{r^2} \right)\phi^2.
\end{aligned} \]
Arranging all previous computations, we conclude that $\mathcal{R}(\phi)=\int_0^\infty \psi' \phi\phi_tdr$ satisfies:
\[
\begin{aligned}
&~{} \dfrac{d\mathcal{R}(\phi)}{dt} \\
&~{} = \int_0^\infty \psi'(\phi_t^2-\phi f(\phi)) \\
&~{} \quad + \psi'\phi\phi_r\bigg\rvert_0^\infty - \dfrac{\psi''\phi^2}{2} \bigg\rvert_0^\infty+\int_0^\infty \psi'''\dfrac{\phi^2}{2}-\int_0^\infty \psi'\phi_r^2 + \dfrac{\psi'}{r}\phi^2\bigg\rvert_0^\infty - \int_0^\infty \left(\dfrac{\psi''}{r}-\dfrac{\psi'}{r^2} \right)\phi^2.
\end{aligned}
\]
Again thanks to Lemma \ref{lemma: density}, every boundary term disappears. We obtain
\[
\begin{aligned}
\dfrac{d\mathcal{R}(\phi)}{dt}  &~{}= \int_0^\infty \psi'(\phi_t^2-\phi f(\phi))   +\int_0^\infty \psi'''\dfrac{\phi^2}{2}-\int_0^\infty \psi'\phi_r^2 - \int_0^\infty \left(\dfrac{\psi''}{r}-\dfrac{\psi'}{r^2} \right)\phi^2\\
&~{}= \int_0^\infty \psi' \left(\phi_t^2-\phi_r^2  -\phi f(\phi) \right)   +\int_0^\infty \left(\dfrac{\psi'}{r^2}-\dfrac{\psi''}{r}+\dfrac{\psi'''}{2}\right) \phi^2 .
\end{aligned}
\]
This proves \eqref{dRdt}. Finally, gathering \eqref{dPdt} and the previous identity in the definition of $\mathcal I(t)$ \eqref{eq: virial_I}, we arrive to \eqref{dIdt}.
\end{proof}

\begin{lemma}\label{lem: virial 2}
If $(\phi,\phi_t) \in H^1 \times L^2$ is a globally defined radial solution of \eqref{eq:1}, then 
\begin{align*}
\dfrac{d\mathcal{J}(\phi)}{dt} &= \int_0^\infty r^2 \varphi_t(t,r)\left(\dfrac{\phi_t^2}{2}+\dfrac{\phi_r^2}{2e^{2Ht}}+F(\phi)\right)\\
& -H\int_0^\infty r^2\varphi(t,r)\left(3\phi_t^2+\dfrac{\phi_r^2}{e^{2Ht}}\right)-\int_0^\infty r^2\varphi_r(t,r)\dfrac{\phi_t\phi_r}{e^{2Ht}}.
\end{align*}
\end{lemma}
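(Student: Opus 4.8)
The plan is to differentiate $\mathcal{J}$ under the integral sign, use equation \eqref{eq:1} to eliminate $\phi_{tt}$, and then integrate by parts in $r$ to move a radial derivative off the product $\phi_t\phi_r$. Exactly as in the proof of Lemma \ref{lem:virial1}, I would first reduce to radial data in $C_0^\infty(\R^3\setminus\{0\})$ by Lemma \ref{lemma: density}, so that all the manipulations below are legitimate and every boundary term at $r=0$ and as $r\to\infty$ vanishes; the general case in $H^1\times L^2$ then follows by density, provided $\varphi,\varphi_t,\varphi_r$ are smooth and bounded enough for the limit to be stable.

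First, splitting the derivative,
\[
\dfrac{d\mathcal{J}(\phi)}{dt} = \int_0^\infty r^2\varphi_t\left(\dfrac{\phi_t^2}{2}+\dfrac{\phi_r^2}{2e^{2Ht}}+F(\phi)\right) + \int_0^\infty r^2\varphi\,\partial_t\left(\dfrac{\phi_t^2}{2}+\dfrac{\phi_r^2}{2e^{2Ht}}+F(\phi)\right),
\]
the first summand being already the first term of the claimed identity. For the second summand, a direct computation gives
\[
\partial_t\left(\dfrac{\phi_t^2}{2}+\dfrac{\phi_r^2}{2e^{2Ht}}+F(\phi)\right) = \phi_t\phi_{tt} + \dfrac{\phi_r\phi_{rt}}{e^{2Ht}} - H\dfrac{\phi_r^2}{e^{2Ht}} + f(\phi)\phi_t .
\]
Substituting $\phi_{tt} = -3H\phi_t + e^{-2Ht}\bigl(\phi_{rr}+\tfrac{2}{r}\phi_r\bigr) - f(\phi)$ from \eqref{eq:1} written in radial form, the two $f(\phi)\phi_t$ terms cancel, and using $e^{-2Ht}\bigl(\phi_t\phi_{rr}+\phi_r\phi_{rt}\bigr)=e^{-2Ht}\partial_r(\phi_t\phi_r)$ the remaining terms regroup as
\[
\partial_t\left(\dfrac{\phi_t^2}{2}+\dfrac{\phi_r^2}{2e^{2Ht}}+F(\phi)\right) = -3H\phi_t^2 - H\dfrac{\phi_r^2}{e^{2Ht}} + e^{-2Ht}\partial_r(\phi_t\phi_r) + \dfrac{2}{r}e^{-2Ht}\phi_t\phi_r .
\]

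Next I multiply by $r^2\varphi$ and integrate over $(0,\infty)$. The terms $-3H\phi_t^2 - H\phi_r^2 e^{-2Ht}$ directly yield $-H\int_0^\infty r^2\varphi\bigl(3\phi_t^2+\phi_r^2 e^{-2Ht}\bigr)$. For the rest, integrating $\int_0^\infty r^2\varphi e^{-2Ht}\partial_r(\phi_t\phi_r)$ by parts and using $\partial_r(r^2\varphi)=2r\varphi+r^2\varphi_r$ (the boundary contributions vanishing by the reduction above) gives $-\int_0^\infty (2r\varphi+r^2\varphi_r)e^{-2Ht}\phi_t\phi_r$; the $2r\varphi$ part cancels exactly against $\int_0^\infty 2r\varphi e^{-2Ht}\phi_t\phi_r$, leaving $-\int_0^\infty r^2\varphi_r\,\phi_t\phi_r/e^{2Ht}$, which is the last term of the statement. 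Collecting the three contributions finishes the proof.

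The step I expect to require the most care is not the algebra but the analytic justification: differentiation under the integral sign and the vanishing of the boundary terms, especially near $r=0$, where the weight $r^2\varphi$ meets the singular factor $\tfrac{2}{r}$. This is handled exactly as in Lemma \ref{lem:virial1}, by first proving the identity for radial $C_0^\infty(\R^3\setminus\{0\})$ data — for which all integrands are smooth and supported away from the origin, so $\mathcal{J}$ and $d\mathcal{J}/dt$ are manifestly finite and the integrations by parts carry no boundary contribution — and then invoking Lemma \ref{lemma: density} together with the continuity of both sides in the energy norm to pass to general data.
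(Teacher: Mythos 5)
Your proposal is correct and follows essentially the same route as the paper: the same splitting into the $\varphi_t$ term plus $\int r^2\varphi\,\partial_t(\text{energy density})$, the same substitution of \eqref{eq:1} to cancel the $f(\phi)\phi_t$ contributions, and the same integration by parts (the paper phrases your radial computation with $\partial_r(r^2\varphi)=2r\varphi+r^2\varphi_r$ as Green's identity on $\R^3$, which is identical), all justified by first reducing to radial $C_0^\infty(\R^3\setminus\{0\})$ data via Lemma \ref{lemma: density}. No gaps.
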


\begin{proof}
As in Lemma \ref{lem:virial1} we shall assume data in $C_0^\infty(\R^3\setminus \{0\})$, and consequently, every boundary term disappear. Deriving \eqref{eq: virial_J} we have
\begin{align*}
\dfrac{d\mathcal{J}(\phi)}{dt} &= \int_0^\infty r^2\varphi_t(t,r)\left(\dfrac{\phi_t^2}{2}+\dfrac{\phi_r^2}{2e^{2Ht}}+F(\phi)\right) \\
&+ \int_0^\infty r^2\varphi(t,r)\left(\phi_{tt}\phi_t+\dfrac{\phi_r\phi_{rt}}{e^{2Ht}}-H\dfrac{\phi_r^2}{e^{2Ht}}+f(\phi)\phi_t\right) \\
&= K_1+K_2
\end{align*}
For the second term, using \eqref{eq:1} we get
\begin{align*}
K_2 &= \int_{0}^\infty r^2\varphi(t,r)\left(-3H\phi_t^2+\dfrac{\phi_t\Delta \phi}{e^{2Ht}}-f(\phi)\phi_t+\dfrac{\phi_r\phi_{rt}}{e^{2Ht}}-H\dfrac{\phi_r^2}{e^{2Ht}}+f(\phi)\phi_t\right)\\
&= -H\int_{0}^\infty r^2\varphi(t,r) \left(3\phi_t^2+\dfrac{\phi_r^2}{e^{2Ht}}\right)+\int_0^\infty r^2\varphi\left(\dfrac{\phi_r\phi_{rt}}{e^{2Ht}}+\dfrac{\phi_t\Delta \phi}{e^{2Ht}}\right) \\
&= -H\int_{0}^\infty r^2\varphi(t,r) \left(3\phi_t^2+\dfrac{\phi_r^2}{e^{2Ht}}\right)-\int_0^\infty r^2\varphi_r(t,r)\dfrac{\phi_r\phi_t}{e^{2Ht}},
\end{align*}
where in the last equality we have used the Green identity on $\R^3$. Arranging the previous calculations we got  
\begin{align*}
\dfrac{d\mathcal{J}(\phi)}{dt} = &~{} \int_0^\infty r^2\varphi_t(t,r)\left(\dfrac{\phi_t^2}{2}+\dfrac{\phi_r^2}{2e^{2Ht}}+F(\phi)\right) \\
&~{}  -H\int_0^\infty r^2\varphi(t,r)\left(3\phi_t^2+\dfrac{\phi_r^2}{e^{2Ht}}\right)-\int_0^\infty r^2\varphi_r(t,r)\dfrac{\phi_t\phi_r}{e^{2Ht}}.
\end{align*}
\end{proof}

\subsection{Choice of the weight function}

We will use Lemma \ref{lem:virial1} and \ref{lem: virial 2} with a particular choice of $\psi$ and $\varphi$. 
\begin{coro}\label{coro: 1}
Consider the weight
\begin{equation}\label{psi}
\psi(r) := \dfrac{r^2}{1 +r}.
\end{equation}
Then the following are satisfied:
\begin{enumerate}
\item  One has that
\begin{align} \label{Virial_bonito}
\mathcal{I}(\phi) = \int_0^\infty \dfrac{r^2}{1+r}\phi_r\phi_t+\dfrac{r(r+2)}{2(1+r)^2}\phi\phi_t 
\end{align}
is well-defined and bounded uniformly in time by the energy of the solution:
\[
\sup_{t\geq 0} \left| \mathcal{I}(\phi)(t) \right| \lesssim H[\phi ,\partial_t \phi ] (t=0).
\]
\item Also,
\begin{align}
\dfrac{d\mathcal{P}(\phi)}{dt} &= \int_0^\infty \dfrac{r(2+3r)}{2(1+r)^2}\phi_r^2-\dfrac{r(r+2)}{(1+r)^2}\left(\dfrac{\phi_t^2}{2} -F(\phi)\right) \label{dPdt bonito}\\ 
\dfrac{d\mathcal{R}(\phi)}{dt} &= \int_0^\infty \dfrac{r(r+2)}{(1+r)^2}(\phi_t^2-\phi f(\phi))+\dfrac{r(r+4)}{(1+r)^4}\phi^2-\dfrac{r(r+2)}{(1+r)^2}\phi_r^2 \label{dRdt bonito} \\
\dfrac{d\mathcal{I}(\phi)}{dt} &= \int_0^\infty r^2\left(\dfrac{1}{(1+r)^2}\phi_r^2+\dfrac{r+4}{2r(1+r)^4}\phi^2 \right)+\dfrac{r(r+2)}{2(1+r)^2}(2F(\phi)-\phi f(\phi)). \label{dIdt bonito}
\end{align}
\end{enumerate}
\end{coro}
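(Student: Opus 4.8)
\textbf{Proof plan for Corollary \ref{coro: 1}.}

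The plan is to specialize Lemmas \ref{lem:virial1} with the explicit weight $\psi(r) = r^2/(1+r)$ and reduce every claim to elementary computations of $\psi$ and its first three derivatives. First I would record the derivatives: a direct computation gives $\psi'(r) = (r^2+2r)/(1+r)^2 = r(r+2)/(1+r)^2$, then $\psi''(r) = 2/(1+r)^3$, and $\psi'''(r) = -6/(1+r)^4$. I would also compute the combinations that appear as coefficients in \eqref{dPdt}--\eqref{dIdt}: the key facts are $2\psi/r - \psi' = (2r)/(1+r) - r(r+2)/(1+r)^2 = r(2+3r)/(2(1+r)^2)$ after simplification (I will double-check the exact numerical coefficient; this is the step most likely to contain an arithmetic slip), and the curvature combination $\psi'/r^2 - \psi''/r + \psi'''/2 = (r+2)/(r(1+r)^2) - 2/(r(1+r)^3) - 3/(1+r)^4$, which after placing over the common denominator $(1+r)^4$ collapses to $r(r+4)/(1+r)^4$ divided by the appropriate power of $r$, matching the coefficient $\tfrac{r+4}{2r(1+r)^4}$ times $r^2$ claimed in \eqref{dIdt bonito}. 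Substituting these into \eqref{dPdt}, \eqref{dRdt}, \eqref{dIdt} yields \eqref{dPdt bonito}, \eqref{dRdt bonito}, \eqref{dIdt bonito} verbatim.

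For part (1), the explicit form \eqref{Virial_bonito} of $\mathcal I(\phi)$ is just \eqref{eq: virial_I} with $\psi$ and $\psi'/2 = r(r+2)/(2(1+r)^2)$ plugged in. To bound it by the energy I would estimate the two terms separately. For the first term, since $0 \le r^2/(1+r) \le r$ for $r \ge 0$, I would write $|\psi(r)\phi_r\phi_t| \le r|\phi_r\phi_t| \le \tfrac12(r^2\phi_r^2/r + r\phi_t^2)$; more cleanly, using $r^2/(1+r)\le r$ and then Cauchy-Schwarz in the measure $r^2\,dr$ together with Lemma \ref{Sob_radial} to control the $r^{-1}$ weight: $\int_0^\infty \tfrac{r^2}{1+r}|\phi_r\phi_t|\,dr \le \int_0^\infty r|\phi_r||\phi_t|\,dr = \int_0^\infty (r|\phi_r|)\,|\phi_t|\,dr$, and since $r|\phi_r|$ and $|\phi_t|$ are both in $L^2(0,\infty)$ with norms controlled by $\|\nabla\phi\|_{L^2(\R^3)}$ and $\|\phi_t\|_{L^2(\R^3)}$ respectively (the $L^2(0,\infty)$ norm of $r g$ equals $(4\pi)^{-1/2}\|g\|_{L^2(\R^3)}$ for radial $g$), this term is $\lesssim \|\nabla\phi\|_{L^2}\|\phi_t\|_{L^2} \lesssim E[\phi,\phi_t]$. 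For the second term, $0 \le r(r+2)/(2(1+r)^2) \le 1$ for all $r\ge 0$, so $\int_0^\infty \tfrac{r(r+2)}{2(1+r)^2}|\phi\phi_t|\,dr \le \int_0^\infty |\phi||\phi_t|\,dr$; here I use Lemma \ref{Sob_radial} to get $\phi \in L^2(0,\infty)$ (equivalently, Hardy's inequality: $\int_0^\infty \phi^2\,dr \le 4\int_0^\infty r^2\phi_r^2\,dr \lesssim \|\nabla\phi\|_{L^2(\R^3)}^2$) and $\phi_t \in L^2(0,\infty)$ is immediate from $\phi_t \in L^2(\R^3)$ radial... wait, one must be careful: $\phi_t \in L^2(\R^3)$ radial does \emph{not} give $\phi_t \in L^2(0,\infty)$ in general. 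Instead I would use $|\phi(r)| \le |r\phi(r)|/r \cdot$ is not helpful either; the correct bound pairs $\phi \in L^2(0,\infty)$ against $r\phi_t$ via $\int_0^\infty |\phi||\phi_t| \le \int_0^\infty \tfrac{|\phi|}{r}\,(r|\phi_t|)\,dr \le \|\phi/r\|_{L^2(0,\infty)}\|r\phi_t\|_{L^2(0,\infty)}$, but $\phi/r\notin L^2(0,\infty)$ near $r=0$. The clean fix is to bound the coefficient by $r(r+2)/(2(1+r)^2) \le \min(1, r) \cdot C$ — indeed near $r=0$ it behaves like $r$ — so on $r\le 1$ one pairs $|\phi|$ (in $L^2$) with $r|\phi_t|$ (in $L^2$, controlled by $\|\phi_t\|_{L^2(\R^3)}$) gaining the factor $r$, and on $r\ge 1$ the coefficient is $\le 1$ but decays like... actually it tends to $1$, so on $r\ge 1$ I pair $r|\phi|$ (in $L^2$, controlled by $\|\phi\|_{L^2(\R^3)}$, wait $r\phi\in L^\infty$ by Lemma \ref{Sob_radial} not $L^2$) with $|\phi_t|/r$... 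The honest approach: split $|\phi\phi_t| = (|\phi| \cdot r)\cdot(|\phi_t|/r)$ is bad; instead use $|\phi\phi_t|\le \tfrac12(\phi^2 + \phi_t^2)$ is bad since $\phi_t^2$ not integrable in $dr$.

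Given this subtlety I would instead prove the $L^\infty$-in-time bound on $\mathcal I$ by going through $\mathcal P$ and $\mathcal R$ separately and using Young's inequality with the \emph{weighted} measure. Concretely, for $\mathcal P$: $|\psi\phi_r\phi_t| \le \tfrac{r^2}{1+r}\cdot\tfrac12(\phi_r^2 + \phi_t^2) \le \tfrac{r^2}{2}(\phi_r^2+\phi_t^2)/(1+r)$, and since $1/(1+r)\le 1$ this is $\le \tfrac{r^2}{2}(\phi_r^2+\phi_t^2)$, whose integral $\int_0^\infty r^2(\ldots)\,dr$ is exactly controlled by $E[\phi,\phi_t]$ (up to $4\pi$); for $\mathcal R$: $|\psi'\phi\phi_t| = \tfrac{r(r+2)}{(1+r)^2}|\phi\phi_t| \le \tfrac{r^2+2r}{(1+r)^2}\cdot\tfrac12\left(\tfrac{\phi^2}{r^2} + r^2\phi_t^2\right)\cdot$ — no. The robust choice: $|\psi'\phi\phi_t| \le \tfrac{r(r+2)}{(1+r)^2}\cdot\tfrac12(r\phi_t^2 + \phi^2/r)$, and note $\tfrac{r(r+2)}{(1+r)^2}\cdot r \le r$ and $\tfrac{r(r+2)}{(1+r)^2}\cdot \tfrac1r = \tfrac{r+2}{(1+r)^2}\le 2$; then $\int_0^\infty r\phi_t^2\,dr$ is not obviously finite, so I abandon this too and simply invoke that $r(r+2)/(1+r)^2\le r^2/(1+r)\cdot(1+2/r)$... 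The cleanest rigorous route, which I would adopt in the write-up, is: bound $\tfrac{r(r+2)}{(1+r)^2} \le \tfrac{2r^2}{1+r^2} \cdot$ is still awkward, so ultimately I use the single estimate $\tfrac{r(r+2)}{2(1+r)^2}|\phi\phi_t|\le \tfrac14\left(\tfrac{r(r+2)}{(1+r)^2}\cdot\tfrac{\phi^2}{1+r} \cdot(1+r) + \ldots\right)$ — at this point it is clear the intended proof simply writes $|\mathcal I(\phi)| \le \int_0^\infty r\,|\phi_r||\phi_t|\,dr + \int_0^\infty |\phi||\phi_t|\,dr$ and then applies Cauchy-Schwarz with the observation (from Lemma \ref{Sob_radial}) that $\|\phi\|_{L^2(0,\infty)} + \|r\phi_r\|_{L^2(0,\infty)}\lesssim\|\phi\|_{H^1(\R^3)}$ while $\|\phi_t\|_{L^2(\R^3)}$ appears after Cauchy-Schwarz in the full measure $r^2\,dr$ on $\R^3$ — i.e. $\int_0^\infty |\phi||\phi_t|\,dr = \int_0^\infty \tfrac{|\phi|}{r}\cdot r|\phi_t|\,dr$, wait that reintroduces $\phi/r$.

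Recognizing that I have been circling, let me state the plan cleanly: I would prove $|\mathcal I(\phi)|\lesssim H[\phi,\partial_t\phi]$ by the pointwise bounds $0\le \psi(r)\le r$ and $0\le \tfrac12\psi'(r)\le \min\{1, Cr\}$, then Cauchy–Schwarz in $L^2((0,\infty), dr)$: $|\mathcal P|\le \|\psi\phi_r\|_{L^2(dr)}\|\phi_t\|_{L^2(dr)}$ fails because $\phi_t\notin L^2(dr)$, so instead Cauchy–Schwarz in $L^2((0,\infty), r^2\,dr)$ after writing $\psi\phi_r\phi_t = \tfrac{\psi}{r^2}(r\phi_r)(r\phi_t)\cdot r^2$ — no, $\psi/r^2 = 1/(r(1+r))\to\infty$ at $0$. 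The genuinely correct and simple argument, which I will use: $\psi(r)/r^2 = 1/(1+r)\cdot(1/r)$... hmm $\psi(r) = r^2/(1+r)$ so $\psi(r)/r^2 = 1/(1+r) \le 1$, bounded! Hence $\mathcal P = \int_0^\infty \tfrac{\psi}{r^2}(r\phi_r)(r\phi_t)\,r^0\,dr$ — the measure issue: $\psi\phi_r\phi_t\,dr = \tfrac{\psi}{r^2}\cdot r\phi_r\cdot r\phi_t\,dr$ and $r\phi_r, r\phi_t \in L^2(dr)$ with norms $\sim\|\nabla\phi\|_{L^2(\R^3)}, \|\phi_t\|_{L^2(\R^3)}$, while $\psi/r^2 = 1/(1+r)\le 1$, so $|\mathcal P|\le \|1/(1+r)\|_\infty\|r\phi_r\|_{L^2(dr)}\|r\phi_t\|_{L^2(dr)}\lesssim E$. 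Likewise $\tfrac12\psi'(r)/r^2 = \tfrac{r+2}{2r(1+r)^2}$ which is $\le 1/r$ near zero — still singular — but $\tfrac12\psi'(r)/r = \tfrac{r+2}{2(1+r)^2}\le 1$ is bounded, so $\mathcal R = \int_0^\infty \tfrac{\psi'}{2r}\cdot\phi\cdot r\phi_t\,dr$ with $\phi\in L^2(dr)$ (Hardy, via Lemma \ref{Sob_radial}) and $r\phi_t\in L^2(dr)$, giving $|\mathcal R|\lesssim\|\phi\|_{L^2(dr)}\|r\phi_t\|_{L^2(dr)}\lesssim E$. Combining, $|\mathcal I| = |\mathcal P + \tfrac12\mathcal R|\lesssim E = H[\phi,\partial_t\phi]$, uniformly in $t$ by energy conservation ($H=0$). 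The main obstacle is thus purely bookkeeping: choosing the right weighted pairing so that no $r^{-1}$ singularity at the origin survives — resolved by the observations $\psi/r^2 = 1/(1+r)$ and $\psi'/(2r) = (r+2)/(2(1+r)^2)$, both bounded on $[0,\infty)$ — after which everything follows from Lemma \ref{Sob_radial}, Lemma \ref{lem:virial1}, and direct differentiation of $\psi$.
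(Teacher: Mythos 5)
Your final argument is correct and is essentially the paper's own (very terse) proof: the identities follow by substituting $\psi'=r(r+2)/(1+r)^2$, $\psi''=2/(1+r)^3$, $\psi'''=-6/(1+r)^4$ into Lemma \ref{lem:virial1}, and the uniform bound on $\mathcal I$ is obtained exactly as you conclude, via $\psi(r)\leq r^2$ (so the first term is controlled by $\int_0^\infty r^2|\phi_r||\phi_t|\,dr\lesssim E$) and $\tfrac12\psi'(r)\leq r$ (so the second pairs $\phi\in L^2(dr)$, from Hardy/Lemma \ref{Sob_radial}, against $r\phi_t\in L^2(dr)$). One small correction to tidy before writing it up: $2\psi/r-\psi'=r^2/(1+r)^2$ (the $\phi_r^2$ coefficient in \eqref{dIdt bonito}), whereas $r(2+3r)/(2(1+r)^2)$ is $2\psi/r-\psi'/2$, the coefficient appearing in \eqref{dPdt bonito} — you flagged this yourself and it does not affect the argument.
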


\begin{proof}
The proof of \eqref{Virial_bonito}, \eqref{dPdt bonito}, \eqref{dRdt bonito} and \eqref{dIdt bonito} are direct from Lemma \ref{lem:virial1} and \eqref{psi}. We check now that $\mathcal{I}(\phi) $ is well-defined. Indeed,
\[
\left|  \int_0^\infty  \dfrac{r^2}{(1 +  r)}\phi_r\phi_t \right| \leq  \int_0^\infty r^2 \left| \phi_t \phi_r \right| \leq H(\phi,\phi_t).  
\] 
Additionally,
\[
\left|  \int_0^\infty \dfrac{r(r+2)}{2(1+r)^2}\phi\phi_t \right| \leq \int_0^\infty r |\phi\phi_t| \leq  H(\phi,\phi_t)^{1/2}\left( \int_0^\infty \phi^2 dr \right)^{1/2}.
\]
Finally, Lemma \ref{Sob_radial} gives the desired uniform in time proof. 
\end{proof}

\begin{coro}\label{coro 2}
For $\sigma, b \in \R$ consider the weight 
\begin{eqnarray}\label{varphi}
\varphi(t,r) = 1+\tanh(r+\sigma t+b).
\end{eqnarray}
Then we have the estimate
\begin{eqnarray}\label{eq: virial bonito 2}
\dfrac{d\mathcal{J}(\phi)}{dt} \leq (1+\sigma)\int_0^\infty r^2\sech^2(r+\sigma t + b) \left(\dfrac{\phi_t^2}{2}+\dfrac{\phi_r^2}{2e^{2Ht}}+F(\phi)\right) .
\end{eqnarray}
\end{coro}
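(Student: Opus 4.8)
\textbf{Plan of proof for Corollary \ref{coro 2}.}

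The plan is to substitute the explicit weight $\varphi(t,r) = 1+\tanh(r+\sigma t+b)$ into the identity furnished by Lemma \ref{lem: virial 2} and then control, term by term, the three contributions on the right-hand side. First I would record the two relevant derivatives of $\varphi$: since $\partial_t[\tanh(r+\sigma t+b)] = \sigma\sech^2(r+\sigma t+b)$ and $\partial_r[\tanh(r+\sigma t+b)] = \sech^2(r+\sigma t+b)$, we have $\varphi_t = \sigma\sech^2(r+\sigma t+b)$ and $\varphi_r = \sech^2(r+\sigma t+b)$; in particular $\varphi_t = \sigma\varphi_r$ and both share the common positive factor $\sech^2(r+\sigma t+b)$. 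Also note $\varphi \geq 0$ and, for the purposes of the $H$-term, $\varphi \geq 0$ is all that is needed.

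Next I would treat the three terms of $d\mathcal{J}(\phi)/dt$ as follows. The $\varphi_t$-term is exactly $\sigma\int_0^\infty r^2\sech^2(r+\sigma t+b)\bigl(\tfrac{\phi_t^2}{2}+\tfrac{\phi_r^2}{2e^{2Ht}}+F(\phi)\bigr)$, which already has the desired form with coefficient $\sigma$. The Hubble term $-H\int_0^\infty r^2\varphi(3\phi_t^2 + \phi_r^2 e^{-2Ht})$ is nonpositive because $H\geq 0$, $\varphi\geq 0$ and the integrand in parentheses is nonnegative, so it may simply be discarded. The only genuinely non-sign-definite term is the cross term $-\int_0^\infty r^2\varphi_r\,\phi_t\phi_r e^{-2Ht} = -\int_0^\infty r^2\sech^2(r+\sigma t+b)\,\phi_t\phi_r e^{-2Ht}$. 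Here I would apply the pointwise Young/Cauchy--Schwarz bound $|\phi_t\phi_r e^{-2Ht}| \leq \tfrac{\phi_t^2}{2} + \tfrac{\phi_r^2}{2e^{2Ht}} \cdot e^{-2Ht}$ — more carefully, writing $|\phi_t\phi_r|e^{-2Ht} = |\phi_t|\cdot |\phi_r|e^{-2Ht} \leq \tfrac12\phi_t^2 + \tfrac12\phi_r^2 e^{-4Ht} \leq \tfrac12\phi_t^2 + \tfrac12 \phi_r^2 e^{-2Ht}$ using $e^{-4Ht}\leq e^{-2Ht}$ for $H,t\geq 0$; alternatively and more cleanly, estimate $|\phi_t\phi_r e^{-2Ht}| = |\phi_t|\cdot |e^{-Ht}\phi_r|\cdot e^{-Ht} \leq |\phi_t|\cdot |e^{-Ht}\phi_r| \leq \tfrac12\phi_t^2 + \tfrac12 e^{-2Ht}\phi_r^2$, which is precisely what is needed. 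This bounds the cross term by $\int_0^\infty r^2\sech^2(r+\sigma t+b)\bigl(\tfrac{\phi_t^2}{2}+\tfrac{\phi_r^2}{2e^{2Ht}}\bigr) \leq \int_0^\infty r^2\sech^2(r+\sigma t+b)\bigl(\tfrac{\phi_t^2}{2}+\tfrac{\phi_r^2}{2e^{2Ht}}+F(\phi)\bigr)$, using $F(\phi)\geq 0$.

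Adding the three estimates gives $d\mathcal{J}(\phi)/dt \leq (\sigma + 1)\int_0^\infty r^2\sech^2(r+\sigma t+b)\bigl(\tfrac{\phi_t^2}{2}+\tfrac{\phi_r^2}{2e^{2Ht}}+F(\phi)\bigr)$, which is exactly \eqref{eq: virial bonito 2}. I do not anticipate a serious obstacle; the only point requiring a little care is handling the $e^{-2Ht}$ weight correctly in the Cauchy--Schwarz step for the cross term — one must split the factor $e^{-2Ht}$ as $e^{-Ht}\cdot e^{-Ht}$ and group one copy with $\phi_r$ so that the resulting squares match the kinetic and gradient densities appearing in $\mathcal{J}$, rather than producing a spurious $e^{-4Ht}$ or $e^{-2Ht}$ mismatch. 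A secondary bookkeeping point is that the estimate also tacitly uses $F(\phi)\geq 0$ (to absorb the cross-term bound into the full energy density); this hypothesis is available in all the settings where the corollary will be invoked, and should be stated or recalled at the start of the proof.
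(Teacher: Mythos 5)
Your proposal is correct and follows essentially the same route as the paper's proof: substitute the weight into Lemma \ref{lem: virial 2}, discard the nonpositive Hubble term, bound the cross term by Young's inequality so that it matches the energy density, and absorb it using $F(\phi)\geq 0$. Your remark about splitting $e^{-2Ht}=e^{-Ht}\cdot e^{-Ht}$ in the Cauchy--Schwarz step is exactly the (implicit) computation behind the paper's displayed pointwise bound, and your observation that $F(\phi)\geq 0$ is tacitly assumed is a fair point --- the paper likewise invokes it without listing it among the corollary's hypotheses.
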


\begin{proof}
Replacing \eqref{varphi} in Lemma \ref{lem: virial 2} we get 
\begin{align*}
\dfrac{d\mathcal{J}(\phi)}{dt} =&~{} \sigma \int_0^\infty r^2\sech^2(r+\sigma t + b)\left(\dfrac{\phi_t^2}{2}+\dfrac{\phi_r^2}{2e^{2Ht}}+F(\phi)\right) \\
& -H\int_0^\infty r^2(1+\tanh(r+\sigma t +b))\left(3\phi_t^2+\dfrac{\phi_r^2}{e^{2Ht}}\right) -\int_0^\infty r^2\sech^2(r+\sigma t + b)\dfrac{\phi_t\phi_r}{e^{2Ht}}.
\end{align*}
Noticing that the second term is strictly negative and 
\[
-\sech^2(r+\sigma t + b)\dfrac{\phi_r\phi_t}{e^{2Ht}} \leq \sech^2(r+\sigma t + b)\left(\dfrac{\phi_t^2}{2}+\dfrac{\phi_r^2}{2e^{2Ht}}\right),
\]
we have that 
\[
\begin{aligned}
\dfrac{d\mathcal{J}(\phi)}{dt} \leq &~{} \sigma \int_0^\infty r^2\sech^2(r+\sigma t + b)\left(\dfrac{\phi_t^2}{2}+\dfrac{\phi_r^2}{2e^{2Ht}}+F(\phi)\right) \\
&~{} +\int_0^\infty r^2\sech^2(r+\sigma t + b)\left(\dfrac{\phi_t^2}{2}+\dfrac{\phi_r^2}{2e^{2Ht}}\right) .
\end{aligned}
\]
Using that $F(\phi) \geq 0$ we obtain the desired estimate \eqref{eq: virial bonito 2}.
\end{proof}

If we define 
\[
\|\phi\|_{H^1_w}^2 = \int_0^\infty \dfrac{r^2}{(1 + r)^4}(\phi^2+\phi_r^2), \qquad \|\phi\|_{L^2_w}^2 = \int_0^\infty \dfrac{r^2}{(1 + r)^4}\phi^2
\]
we can see that
\[
\dfrac{d\mathcal{I}(\phi)}{dt} \geq \|\phi\|^2_{H^1_w}+\dfrac{r(r+2)}{2(1+r)^2}(2F(\phi)-\phi f(\phi))
\]
By other side, if we choose $\psi(r) = -\dfrac{3r^2+3r+1}{(1+r)^3}$ we have 
\[
\tilde{\mathcal{R}} = \int_0^\infty \dfrac{r^2}{(1+r)^4}\phi\phi_t,
\]
and 
\[
\dfrac{d\tilde{\mathcal{R}}}{dt}(\phi) = \int_0^\infty \dfrac{r^2}{(1+r)^4}(\phi_t^2-\phi_r^2-\phi f(\phi))+\dfrac{3r(3r-2)}{(1+r)^6}\phi^2.
\]
This allows us to prove the following propositions:

\begin{prop}\label{prop 1}
Under the hypothesis of Theorem \ref{thm: decaimiento} the solution $(\phi,\phi_t)$ of (\ref{eq:1}) satisfies 
\[
\int_0^\infty (\|\phi\|_{H^1_w}^2+\|\phi_t\|_{L^2_w}^2)dt < +\infty.
\]
\end{prop}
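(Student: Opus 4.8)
The plan is to run a standard virial/integration argument on the functional $\mathcal I(\phi)$, exploiting that under the hypotheses of Theorem~\ref{thm: decaimiento} the solution is global (by Theorem~\ref{thm:global}) with uniformly bounded energy, and that $\mathcal I(\phi)(t)$ is itself uniformly bounded in time by the energy (part (1) of Corollary~\ref{coro: 1}). The key structural input is the identity \eqref{dIdt bonito}, which, because $2F(\phi)-\phi f(\phi)\geq 0$ and $F(\phi)\geq 0$, makes $t\mapsto \mathcal I(\phi)(t)$ monotone nondecreasing with derivative bounded below by a nonnegative ``good'' quantity. Writing things in the weighted-norm notation introduced just before the statement, we have
\[
\frac{d\mathcal I(\phi)}{dt} \;\geq\; \|\phi\|_{H^1_w}^2 \;+\; \int_0^\infty \frac{r(r+2)}{2(1+r)^2}\bigl(2F(\phi)-\phi f(\phi)\bigr)\;\geq\; \|\phi\|_{H^1_w}^2 \;\geq\; 0.
\]
Integrating this from $0$ to $T$ gives $\int_0^T \|\phi\|_{H^1_w}^2\,dt \leq \mathcal I(\phi)(T)-\mathcal I(\phi)(0) \leq 2\sup_{t\geq 0}|\mathcal I(\phi)(t)| \lesssim H[\phi_0,\phi_1]$, a bound uniform in $T$; letting $T\to\infty$ yields $\int_0^\infty \|\phi\|_{H^1_w}^2\,dt < \infty$.

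It remains to produce the companion bound $\int_0^\infty \|\phi_t\|_{L^2_w}^2\,dt < \infty$, which is exactly what the auxiliary functional $\tilde{\mathcal R}$ (coming from the choice $\psi(r)=-\tfrac{3r^2+3r+1}{(1+r)^3}$) is designed for. From the stated identity
\[
\frac{d\tilde{\mathcal R}}{dt}(\phi) = \int_0^\infty \frac{r^2}{(1+r)^4}\bigl(\phi_t^2-\phi_r^2-\phi f(\phi)\bigr) + \frac{3r(3r-2)}{(1+r)^6}\phi^2,
\]
one isolates $\int_0^\infty \frac{r^2}{(1+r)^4}\phi_t^2 = \|\phi_t\|_{L^2_w}^2$ on the left: it equals $\frac{d\tilde{\mathcal R}}{dt} + \int_0^\infty \frac{r^2}{(1+r)^4}(\phi_r^2+\phi f(\phi)) - \int_0^\infty \frac{3r(3r-2)}{(1+r)^6}\phi^2$. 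Integrating over $[0,T]$: the $\frac{d\tilde{\mathcal R}}{dt}$ term contributes $\tilde{\mathcal R}(T)-\tilde{\mathcal R}(0)$, which is uniformly bounded since $|\tilde{\mathcal R}(\phi)(t)| \leq \bigl(\int \tfrac{r^2}{(1+r)^4}\phi^2\bigr)^{1/2}\bigl(\int \tfrac{r^2}{(1+r)^4}\phi_t^2\bigr)^{1/2} \lesssim H[\phi_0,\phi_1]$ by Cauchy--Schwarz, Lemma~\ref{Sob_radial} and energy conservation; the $\int_0^T\!\!\int \frac{r^2}{(1+r)^4}\phi_r^2$ term is controlled by $\int_0^T \|\phi\|_{H^1_w}^2$, already shown finite; the weight $\frac{r^2}{(1+r)^6}$ is dominated by $\frac{r^2}{(1+r)^4}$, so the $\phi^2$ term is also controlled by $\int_0^T\|\phi\|_{H^1_w}^2 < \infty$; and $\phi f(\phi)\geq 0$ with, from $2F-\phi f\geq 0$ and $F\geq 0$, the bound $0\leq \phi f(\phi)\leq 2F(\phi)$ — hence $\int_0^T\!\!\int \frac{r^2}{(1+r)^4}\phi f(\phi) \leq \int_0^T\!\!\int \frac{2r^2}{(1+r)^4}F(\phi)$, which is again absorbed: indeed $\int_0^\infty \frac{r(r+2)}{(1+r)^2}(2F-\phi f) \geq 0$ combined with the first part shows $\int_0^T\!\!\int \frac{r(r+2)}{(1+r)^2}F(\phi)$ stays bounded, hence so does the smaller $\int_0^T\!\!\int \frac{r^2}{(1+r)^4}F(\phi)$. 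Summing, $\int_0^T\|\phi_t\|_{L^2_w}^2\,dt$ is bounded uniformly in $T$, so its limit is finite, completing the proof.

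The main obstacle I anticipate is the bookkeeping of how the $F(\phi)$ term gets absorbed: one needs to extract from \eqref{dIdt bonito} not merely $\int_0^\infty\|\phi\|_{H^1_w}^2\,dt<\infty$ but also a space-time bound on a positive multiple of $F(\phi)$ (or on $\phi f(\phi)$), since $\phi_t^2$ in $\frac{d\tilde{\mathcal R}}{dt}$ is paired with a $-\phi f(\phi)$ of the wrong sign relative to what one would want. The resolution is exactly the sign hypothesis $2F-\phi f\geq 0$ together with $F\geq 0$, which forces $0\leq \phi f(\phi)\leq 2F(\phi)$ pointwise and makes the ``defocusing'' term in \eqref{dIdt bonito} an honest nonnegative quantity whose time integral we can extract; one must be slightly careful that the weights $\frac{r(r+2)}{(1+r)^2}$ and $\frac{r^2}{(1+r)^4}$ are comparable up to the harmless factor $\frac{(1+r)^2}{r+2}$, which is bounded below away from zero, so domination goes the right way. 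A minor technical point: all these manipulations are justified first for smooth compactly-supported-away-from-the-origin radial data via Lemma~\ref{lemma: density} and then passed to the limit, exactly as in the proof of Lemma~\ref{lem:virial1}.
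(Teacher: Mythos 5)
Your overall strategy is the same as the paper's (the functional $\mathcal I$ for the $\|\phi\|_{H^1_w}^2$ bound, then the auxiliary $\tilde{\mathcal R}$ to isolate $\|\phi_t\|_{L^2_w}^2$), and the first half is correct. But two steps in the second half fail as written. First, the claim that the $\phi^2$ term coming from $\tfrac{d\tilde{\mathcal R}}{dt}$ is ``dominated by $\tfrac{r^2}{(1+r)^4}$'' and hence controlled by $\int_0^T\|\phi\|_{H^1_w}^2$ is false near the origin: the actual weight is $\tfrac{3r(2-3r)}{(1+r)^6}\sim 6r$ as $r\to 0$, and $r\not\lesssim r^2$ there, so the $L^2_w$ weight $\tfrac{r^2}{(1+r)^4}\sim r^2$ does not absorb it. The rescue — and what the paper does — is to observe that \eqref{dIdt bonito} gives the \emph{stronger} space–time bound $\int_0^\infty\!\!\int_0^\infty \tfrac{r(r+4)}{2(1+r)^4}\phi^2\,dr\,dt<\infty$, whose weight is $\sim 2r$ near the origin and genuinely dominates $\tfrac{6r}{(1+r)^6}$; one must keep the full $C\,\tfrac{d\mathcal I}{dt}$ on the right-hand side rather than downgrade to $\|\phi\|_{H^1_w}^2$.

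Second, your treatment of the $\phi f(\phi)$ term is a non sequitur. From $\tfrac{d\mathcal I}{dt}$ you only control the space–time integral of the \emph{nonnegative combination} $2F(\phi)-\phi f(\phi)$; this gives no bound whatsoever on $\int\!\!\int F(\phi)$ itself (for $F=\tanh^2$ one has $2F-\phi f=O(\phi^4)$ while $F\sim\phi^2$, so the controlled quantity can be negligible compared to $F$). Likewise, $\phi f(\phi)\geq 0$ is not among the hypotheses of Theorem \ref{thm: decaimiento} and does not follow from $F\geq 0$ and $2F-\phi f\geq 0$ (though you never actually need it). The correct and much shorter route, used in the paper, is to invoke the global Lipschitz hypothesis together with $f(0)=0$ to get $\phi f(\phi)\leq M\phi^2$ pointwise, and then absorb $\int \tfrac{Mr^2}{(1+r)^4}\phi^2+\tfrac{3r(2-3r)}{(1+r)^6}\phi^2 \leq C(M)\int\tfrac{r(r+4)}{2(1+r)^4}\phi^2\leq C(M)\tfrac{d\mathcal I}{dt}$, whose time integral is finite by the uniform bound on $\mathcal I$. (Your bound on $|\tilde{\mathcal R}|$ and the use of Lemma \ref{lemma: density} to justify the integrations by parts are fine.) Both gaps are repairable with the Lipschitz hypothesis, but as written the argument does not close.
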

\begin{proof}
From Corollary \ref{coro: 1} and the previous calculations we see that 
\[
\dfrac{d\mathcal{I}(\phi)}{dt} \geq \|\phi\|_{H_w^1}^2,
\]
and then
\[
\begin{aligned}
\int_0^\infty \|\phi\|_{H^1_w}^2dt \leq &~{} \lim_{t \to \infty}\mathcal{I}(\phi(t))-\mathcal{I}(\phi(0))\\
 \lesssim &~{} H(\phi,\phi_t) + |\mathcal{I}(\phi(0))|. 
\end{aligned}
\]
On the other hand, since
\[
\dfrac{d\tilde{\mathcal{R}}}{dt}(\phi) = \int_0^\infty  \left( \dfrac{r^2}{(1+r)^4}(\phi_t^2-\phi_r^2-\phi f(\phi))+\dfrac{3r(3r-2)}{(1+r)^6}\phi^2 \right),
\]
we have 
\[ \begin{aligned}
\|\phi_t\|_{L^2_w}^2 =&~{} \dfrac{d\tilde{\mathcal{R}}}{dt}+\|\phi_r\|_{L^2_w}^2+\int_0^\infty \left(\dfrac{r^2}{(1+r)^4}\phi f(\phi)+ \dfrac{3r(2-3r)}{(1+r)^6}\phi^2 \right)\\
 \leq &~{} \dfrac{d\tilde{\mathcal{R}}}{dt}+\|\phi_r\|_{L^2_w}^2+\int_0^\infty \left(  \dfrac{Mr^2}{(1+r)^4} + \dfrac{3r(2-3r)}{(1+r)^6} \right) \phi^2 \\
 \leq &~{} \dfrac{d\tilde{\mathcal{R}}}{dt}+\|\phi_r\|_{L^2_w}^2+ C(M)\int_0^\infty  \dfrac{r(4+r)}{(1+r)^4} \phi^2 \\
 \leq & ~{}\dfrac{d\tilde{\mathcal{R}}}{dt}+\|\phi_r\|_{L^2_w}^2+C(M) \dfrac{d\mathcal{I}}{dt},
\end{aligned} \]
where $M$ is the Lipschitz constant of $f$, and we have used \eqref{dIdt bonito} and the previous observations. We note that 
\[
|\tilde{R}(\phi)| \leq \int_0^\infty \dfrac{r^2}{1+r}|\phi\phi_t| \leq \int_0^\infty r|\phi\phi_t|,
\]
and from the proof of Corollary \ref{coro: 1} we see that $\tilde{R}(\phi)$ is uniformly bounded in time by the energy of the solution. Integrating the last inequality the result follows. 
\end{proof}

\begin{prop}\label{prop 2}
Under the hypothesis of Theorem \ref{thm: decaimiento 2} the solution $(\phi,\phi_t)$ of (\ref{eq:1}) satisfies 
\[
\int_0^\infty (\|\phi\|_{H^1_w}^2+\|\phi_t\|_{L^2_w}^2)dt < +\infty.
\]
\end{prop}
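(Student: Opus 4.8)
The plan is to reproduce the two-step scheme of Proposition~\ref{prop 1}: first control $\int_0^\infty\|\phi\|_{H^1_w}^2\,dt$ through the coercivity of $d\mathcal I/dt$ in \eqref{dIdt bonito}, then control $\int_0^\infty\|\phi_t\|_{L^2_w}^2\,dt$ by comparing $\|\phi_t\|_{L^2_w}^2$ with $d\tilde{\mathcal R}/dt$ and $d\mathcal I/dt$. The only genuine novelty, relative to Proposition~\ref{prop 1}, is that under the hypotheses of Theorem~\ref{thm: decaimiento 2} the quantity $2F(\phi)-\phi f(\phi)$ occurring in \eqref{dIdt bonito} is not globally nonnegative; one must instead absorb it as a perturbation using the smallness \eqref{acota}. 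From the start I fix $\varepsilon<\delta$, so that by \eqref{acota} one has $|\phi(t,r)|<\delta$ for all $t,r$; hence all the local-in-$\phi$ hypotheses of Theorem~\ref{thm: decaimiento 2} apply pointwise along the (global) solution, $F(\phi)\ge0$ pointwise, and the energy stays uniformly bounded, so $\mathcal I(\phi)$ and $\tilde{\mathcal R}(\phi)$ are bounded uniformly in time exactly as in the proof of Corollary~\ref{coro: 1}.

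\textbf{Step 1.} If \eqref{crecimiento 2} holds, then $2F(\phi)-\phi f(\phi)\ge0$ pointwise along the solution, so \eqref{dIdt bonito} gives $d\mathcal I/dt\ge\|\phi\|_{H^1_w}^2$ as in Proposition~\ref{prop 1}, and integrating in time against the uniform bound on $\mathcal I(\phi)$ closes this case. If \eqref{crecimiento} holds, then $0\le\phi f(\phi)\le C\phi^4$ on $(-\delta,\delta)$ forces $|f(\phi)|\le C|\phi|^3$, hence $|F(\phi)|\le\tfrac C4|\phi|^4$ and $|2F(\phi)-\phi f(\phi)|\le\tfrac{3C}{2}|\phi|^4$ there. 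The crucial estimate I would prove is
\[
\int_0^\infty\frac{r(r+2)}{2(1+r)^2}\,\bigl|2F(\phi)-\phi f(\phi)\bigr|\,dr\ \lesssim\ \varepsilon^2\int_0^\infty\frac{r(r+4)}{2(1+r)^4}\,\phi^2\,dr ,
\]
that is: the bad term is $O(\varepsilon^2)$ times the \emph{good} quadratic term already present in $d\mathcal I/dt$. To get it I split at $r=1$: for $r\le1$ I use $|\phi|^4\le\varepsilon^2\phi^2$ and the routine comparison $\frac{r(r+2)}{(1+r)^2}\lesssim\frac{r(r+4)}{(1+r)^4}$ on $[0,1]$; for $r\ge1$ I instead use $|\phi(r)|^2\le C\|\phi\|_{H^1}^2\,r^{-2}\le C\varepsilon^2 r^{-2}$ from Lemma~\ref{Sob_radial} to produce the extra $r^{-2}$, after which $\frac{r(r+2)}{(1+r)^2}\,|\phi|^4\le C\varepsilon^2\,\frac{r+2}{r(1+r)^2}\,\phi^2\lesssim\varepsilon^2\,\frac{r(r+4)}{(1+r)^4}\,\phi^2$ on $[1,\infty)$. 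Substituting into \eqref{dIdt bonito} and keeping the nonnegative term $\int_0^\infty\frac{r^2}{(1+r)^2}\phi_r^2$, one gets, for $\varepsilon$ small,
\[
\frac{d\mathcal I}{dt}\ \ge\ \int_0^\infty\frac{r^2}{(1+r)^2}\phi_r^2+(1-C\varepsilon^2)\int_0^\infty\frac{r(r+4)}{2(1+r)^4}\phi^2\ \gtrsim\ \|\phi\|_{H^1_w}^2 ,
\]
using $r+4\ge r$ and $(1+r)^{-2}\ge(1+r)^{-4}$. Integrating in time and invoking the uniform bound on $\mathcal I(\phi)$ gives $\int_0^\infty\|\phi\|_{H^1_w}^2\,dt<\infty$.

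\textbf{Step 2.} This is the same as in Proposition~\ref{prop 1}. With $\psi(r)=-\frac{3r^2+3r+1}{(1+r)^3}$, so that $\tilde{\mathcal R}(\phi)=\int_0^\infty\frac{r^2}{(1+r)^4}\phi\phi_t$, the identity for $d\tilde{\mathcal R}/dt$ recorded before Proposition~\ref{prop 1} gives
\[
\|\phi_t\|_{L^2_w}^2=\frac{d\tilde{\mathcal R}}{dt}+\|\phi_r\|_{L^2_w}^2+\int_0^\infty\left(\frac{r^2}{(1+r)^4}\phi f(\phi)+\frac{3r(2-3r)}{(1+r)^6}\phi^2\right).
\]
In case \eqref{crecimiento}, $\phi f(\phi)\le C\phi^4\le C\varepsilon^2\phi^2$; in case \eqref{crecimiento 2}, since $f\in C^1$ with $f(0)=0$, $\phi f(\phi)\le M'\phi^2$ on $(-\delta,\delta)$ hence along the solution; in both cases the last integral is $\lesssim\int_0^\infty\frac{r(r+4)}{(1+r)^4}\phi^2\lesssim d\mathcal I/dt$ by Step~1 (using also the routine comparison of $\frac{3r(2-3r)}{(1+r)^6}$ with $\frac{r(r+4)}{(1+r)^4}$). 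Therefore $\|\phi_t\|_{L^2_w}^2\le d\tilde{\mathcal R}/dt+\|\phi_r\|_{L^2_w}^2+C\,d\mathcal I/dt$, and on integrating in time the boundary contributions of $\tilde{\mathcal R}(\phi)$ and $\mathcal I(\phi)$ are finite by their uniform-in-time bounds, while $\int_0^\infty\|\phi_r\|_{L^2_w}^2\,dt\le\int_0^\infty\|\phi\|_{H^1_w}^2\,dt<\infty$ by Step~1; this proves the proposition.

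\textbf{Main obstacle.} The only delicate point is the displayed weighted bound in Step~1 under \eqref{crecimiento}. The weight $\frac{r(r+2)}{(1+r)^2}$ of the sign-indefinite term decays like $r^{-2}$ slower than the weight $\frac{r(r+4)}{(1+r)^4}$ of the good term, so closing the far region requires extracting a factor $r^{-2}$ from one copy of $|\phi|^2$ via Lemma~\ref{Sob_radial}; that very bound is useless near $r=0$, where one must instead trade a copy of $|\phi|^2$ against the $L^\infty$ smallness. Making both regions match with a constant that is $O(\varepsilon^2)$, so that it can be reabsorbed for $\varepsilon$ small, is the heart of the argument.
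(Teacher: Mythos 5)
Your proposal is correct and follows essentially the same route as the paper: coercivity of $d\mathcal I/dt$ from \eqref{dIdt bonito} after absorbing the quartic term via the smallness \eqref{acota} combined with the radial bound $r|\phi|\lesssim\|\phi\|_{H^1}$ of Lemma \ref{Sob_radial}, then the $\tilde{\mathcal R}$ identity with $|f(\phi)|\lesssim|\phi|$ for the $\phi_t$ piece. The only cosmetic difference is that you split the absorption at $r=1$, while the paper packages both regimes into the single pointwise bound $\phi^2\leq(1+C)\varepsilon^2(1+r)^{-2}$.
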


\begin{proof}
From \eqref{crecimiento} we see that $F(\phi) \geq 0$ an using the inequality \eqref{crecimiento} we obtain that 
\[
\dfrac{d\mathcal{I}(\phi)}{dt} \geq \int_0^\infty r^2\left(\dfrac{1}{(1+r)^2}\phi_r^2+\dfrac{r+4}{2r(1+r)^4}\phi^2 \right)-C\dfrac{r(r+2)}{2(1+r)^2}\phi^4.
\]
Since we have supposed that $\sup_{t \geq 0}\|\phi(t)\|_{H^1\cap L^\infty} \leq \varepsilon$ we have 
\begin{align*}
|\phi(t,r)| \leq &~{} \varepsilon  \quad \forall t, r \geq 0, \\
\|\phi(t)\|_{H^1} \leq &~{} \varepsilon \quad  \forall t \geq 0,
\end{align*}
and from Lemma \ref{Sob_radial} we have that
\[
r|\phi(r)| \leq C\|\phi\|_{H^1} .
\]
Gathering both inequalities we obtain 
\[
\phi(r)^2 \leq \dfrac{(1+C)\varepsilon ^2}{(1+r)^2},
\]
and hence 
\[
\begin{aligned}
\dfrac{d\mathcal{I}(\phi)}{dt} \geq &~{} \int_0^\infty r^2\left(\dfrac{1}{(1+r)^2}\phi_r^2+\dfrac{r+4}{2r(1+r)^4}\phi^2 \right)-(1+C)\dfrac{r(r+2)}{2(1+r)^4}\varepsilon^2\phi^2 \\
 \geq &~{} \int_0^\infty \dfrac{r^2}{(1+r)^2}\phi_r^2+\dfrac{r(r+2)}{2(1+r)^4}(1-(1+C)\varepsilon^2)\phi^2 \\
 \geq &~{} \|\phi\|^2_{H_w^1},
\end{aligned}
\]
provided that $\varepsilon$ is small enough. Notice that, since $\phi(t,r)$ is uniformly bounded and $f$ is $C^1$ there existe a constant $M >0$ such that 
\[
|f(\phi(t))| \leq M|\phi(t)|, \quad \forall t \geq 0 ,
\]
which implies that $\phi f(\phi) \leq M\phi^2$, and we conclude as in the previous proposition.
\end{proof}

\section{Proof of main results}\label{proofs}

Now we can prove Theorem \ref{thm: decaimiento}, Theorem \ref{thm: decaimiento 2} and Theorem \ref{thm: decaimiento 3}.

\subsection{ Proof of Theorem \ref{thm: decaimiento} and \ref{thm: decaimiento 2}}
Let 
\[
\mathcal{H}(t) = \int_0^\infty \psi(\phi^2+\phi_r^2+\phi_t^2),
\]
then, we can see that 
\[ \begin{aligned}
\dfrac{d}{dt}\mathcal{H}(t) =& \int_0^\infty 2\psi(\phi\phi_t+\phi_r\phi_{rt}+\phi_t\phi_{tt}) \\
=&  \int_0^\infty2\psi\left(\phi\phi_t+\phi_r\phi_{rt}+\phi_t\phi_{rr}+\dfrac{2}{r}\phi_t\phi_r-\phi_tf(\phi)\right) \\
=& \int_0^\infty 2\psi\left(\phi\phi_t+\phi_r\phi_{rt}+\dfrac{2}{r}\phi_t\phi_r-\phi_tf(\phi)\right)dr +\int_0^\infty 2\psi \phi_t\phi_{rr}.
\end{aligned} \] 
Using Lemma \ref{lemma: density} we have 
\[
\int_0^\infty 2\psi \phi_t\phi_{rr} = -\int_0^\infty 2\phi_r(\psi \phi_{tr}+\psi'\phi_t),
\]
and hence
\[ 
\begin{aligned}
\dfrac{d}{dt} \mathcal{H}(t) =  2\int_0^\infty \psi(\phi\phi_t-\phi_tf(\phi))+\left(\dfrac{2\psi}{r}-\psi'\right)\phi_t\phi_r 
\end{aligned} 
\]
Since $\psi(r) = \dfrac{r^2}{(1+r)^4}$, we have that 
\[
\mathcal{H}(t) = \int_0^\infty\dfrac{r^2}{(1+r)^4}(\phi^2+\phi_r^2+\phi_t^2) = \|\phi\|_{H^1_w}^2+\|\phi_t\|_{L^2_w}^2,
\]
and 
\[
\dfrac{d}{dt}\mathcal{H}(t) = 2\int_0^\infty \dfrac{r^2}{(1+r)^4}(\phi\phi_t-\phi_tf(\phi))+\dfrac{4r^2}{(1+r)^5}\phi_r\phi_t.
\]
As we see, whether $f$ is globally Lipschitz or $f$ is $C^1$ and $\|\phi(t)\|_{L^\infty}$ is uniformly bounded in time then we have 
\[
|f(u)| \lesssim |u|.
\]
Thus, we see that 
\[ \begin{aligned}
\left|\dfrac{d}{dt}\mathcal{H}(t) \right|  \lesssim  & \int_0^\infty \dfrac{r^2}{(1+r)^4}(|\phi||\phi_t|+ |\phi_t||f(\phi)|)+\dfrac{4r^2}{(1+r)^5}\phi_r\phi_t \\
\lesssim & \int_0^\infty  \dfrac{r^2}{(1+r)^4}(\phi^2+\phi_t^2+ \phi^2) + \dfrac{4r^2}{(1+r)^4}(\phi_r^2+\phi_t^2) \\
\lesssim & ~{} \mathcal{H}(t).
\end{aligned} \]
From Proposition \ref{prop 1} and \ref{prop 2} there exists a sequence $t_n \to \infty$ such that $H(t_n) \to 0$. Integrating the inequality above on $[t,t_n]$we see that 
\[ \begin{aligned}
|\mathcal{H}(t_n)-\mathcal{H}(t)| =& \left|\int_t^{t_n}\dfrac{d}{dt}\mathcal{H}(s)ds\right| \\
\leq & \int_t^{t_n}\left|\dfrac{d}{dt}\mathcal{H}(s)ds\right| \lesssim  \int_t^{t_n}\mathcal{H}(s)ds ,
\end{aligned} \]
and passing to limit as $n \to \infty$ we have 
\[
\mathcal{H}(t) \leq \int_t^\infty \mathcal{H}(s)ds,
\]
and hence $\displaystyle\lim_{t \to \infty}\mathcal{H}(t) = \displaystyle\lim_{t \to \infty} \left( \|\phi\|_{H^1_w}^2+\|\phi_t\|_{L^2_w}^2 \right)= 0$. To conclude the proof is enough to note that for any $R>0$ we have  
\[ \begin{aligned}
\|(\phi,\phi_t)\|_{H^1 \times L^2(B(0,R))}^2 =& \int_{B(0,R)}(\phi^2+\phi_r^2+\phi_t^2)dx \\
=& ~{}4\pi \int_0^R r^2(\phi^2+\phi_r^2+\phi_t^2)dr \\
 \leq &~{} 4\pi(1+R)^4\int_0^R \dfrac{r^2}{(1+r)^4}(\phi^2+\phi_r^2+\phi_t^2)dr \\
\leq & ~{}4\pi(1+R)^4\mathcal{H}(t),
\end{aligned} \]
and the result follows.

\subsection{ Proof of Theorem \ref{thm: decaimiento 3}: Existence}
Let $\phi(t,x)$ be the local solution to \eqref{eq:1} on $[0,T)\times \R^3$ given by Theorem \ref{thm:local}. We shall prove that for $\varepsilon >0$ small enough we can extend this solution smoothly to $[0,T] \times \R^3$. First notice that we have the identity 
\[
\partial_t\left(\dfrac{\phi_t^2}{2}+\dfrac{|\nabla \phi|^2}{2e^{2e^{2Ht}}}+F(\phi)\right)-\text{div}\left(\dfrac{\phi_t\nabla \phi}{e^{2Ht}} \right) = -3H\phi_t^2-H\dfrac{|\nabla \phi|^2}{e^{2Ht}},
\]where the divergence is taken in the spatial variables. We denote 
\[
K(t_0,x_0) = \{(t,x) \in \R^4 \; | \; t\leq t_0, \quad H|x-x_0| \leq e^{-Ht}-e^{-Ht_0}\},
\] 
the backward light cone and 
\[
M(t_0,x_0) =  \{(t,x) \in \R^4 \; | \;  t\leq t_0, \quad H|x-x_0| = e^{-Ht}-e^{-Ht_0}\},
\]its lateral boundary. Then, integrating on \[
K_s^t(t_0,x_0) = \{(t,x) \in \R^4 \; | \; H|x-x_0| \leq e^{-Ht}-e^{-Ht_0}\} \cap [s,t]\times \R^3;
\] and applying the divergence theorem we see that 
\begin{eqnarray*}
\int_{B(x_0,R(t))}e(t)dx+\int_{M_s^t}\dfrac{1}{\sqrt{1+e^{2Ht}}}\left(\dfrac{\left|\phi_t\frac{x-x_0}{|x-x_0|}-\nabla \phi \right|^2}{2} +F(\phi)\right)dS = \\
\int_{B(x_0,R(s))}e(s)dx-\int_{M_s^t}3H\phi_t^2+H\dfrac{|\nabla \phi|^2}{e^{2Ht}} ,
\end{eqnarray*}
where $R(t) = \frac{e^{-Ht}-e^{-Ht_0}}{H}$, $M_s^t$ is the lateral boundary of the truncated cone $K_s^t$ and \[
e(t) = \left(\dfrac{\phi_t^2}{2}+\dfrac{|\nabla \phi|^2}{2e^{2e^{2Ht}}}+F(\phi)\right)(t,x)
\]is the energy density. The last identity implies that if $\phi = 0$ on $B(x_0,R(s))$ then $\phi = 0$ on $B(x_0,R(t))$ for every $t \in [s,t_0]$, that is, $\phi$ have finite speed of propagation and then, if the initial data have compact support then $\phi$ has it too for every time where it is defined. From the well posedness theory we have in addition that the solution is smooth in the spatial variables for every time.

Now we must to show that 
\[
\sup_{t \in [0,T)}\| \phi(t)\|_{\infty} <\infty.
\]
To do this we shall estimate the $H^2$ norm of $\phi$. Notice that from the hypothesis on the initial data there exist some constant such that 
\[
\|(\phi,\phi_t)(0)\|_{H^2 \times H^1} \leq \dfrac{C_0\varepsilon}{4}.
\]
To estimate $\|\phi(t)\|_{H^2}$ suppose that \[
\sup_{t \in [0,T)}\|\phi(t)\|_{H^2} \leq C_0\varepsilon 
\]for the same constant as above. We will show that we can improve this estimate to obtain that \[
\sup_{t \in [0,T)}\|\phi(t)\|_{H^2} \leq \dfrac{C_0\varepsilon}{2}.
\]For this we note that from \eqref{Energy} if $\varepsilon >0$ is small enough we have that \[
\dfrac{\|\phi_t\|^2_{L^2}}{2}+\dfrac{\|\nabla \phi\|_{L^2}^2}{2e^{2Ht}} \leq \varepsilon^2\left( \dfrac{\|h\|_{L^2}^2}{2}+\dfrac{\|\nabla g\|^2_{L^2}}{2e^{2Ht}}\right)+\varepsilon^4\int g^4,
\]and in consequence  \[
\sup_{t \in [0,T)}\|\phi(t)\|_{H^1} \leq (1+T)\dfrac{C_0(\varepsilon+\varepsilon^2)}{4}.
\]
To estimate the $H^2$ norm note that $u_i = \partial_{x_i}\phi$ satisfies the equation \[
\partial_t^2u_i+3H\partial_t u_i-\dfrac{\Delta u_i}{e^{2Ht}}+f'(\phi)u_i = 0,
\]
and from Lemma \ref{lema: LW} we have 
\begin{align*}
\dfrac{\|u_{it}\|_{L^2}^2}{2}+\dfrac{\|\nabla u_i\|^2}{2e^{2Ht}} \leq & \varepsilon^2 e^{ct}\left(\dfrac{\|\partial_{x_i}h\|_{L^2}^2}{2}+\dfrac{\|\partial_{x_i}\nabla g\|^2}{2e^{2Ht}}\right)+\int_0^t e^{c(t-s)}\|f'(\phi)u_i\|_{L^2}^2ds \\
\leq & \varepsilon^2 e^{ct}\left(\dfrac{\|\partial_{x_i}h\|_{L^2}^2}{2}+\dfrac{\|\partial_{x_i}\nabla g\|^2}{2e^{2Ht}}\right)+\varepsilon ^4\int_0^t e^{c(t-s)}\|\phi^2 u_i\|_{L^2}^2ds ,
\end{align*}
and in the same way as before we obtain that 
\[
\sup_{t \in [0,T)}\|\partial_{x_i}\phi(t)\|_{H^1} \leq C(T)\dfrac{C_0(\varepsilon+\varepsilon^2)}{4}.
\]
This implies, for $\varepsilon >0$ small enough, that 
\[
\sup_{t \in [0,T)}\|\phi(t)\|_{H^2} \leq \dfrac{C_0\varepsilon}{2},
\]
as we wanted. Note that applying the same argument as above, in addition to Lemma \ref{lema: LW} we obtain that all the spatial derivatives of $\phi$ are bounded in time. Additionally, since $u = \phi_t$ satisfies \[
\partial_t^2 u+3H\partial_t u-\dfrac{\Delta u}{e^{2Ht}}+2H\dfrac{\Delta \phi}{e^{2Ht}}+f'(\phi)u = 0
\] 
we can apply the same idea to prove that the $H^2$ norm of $\phi_t$ is uniformly bounded in time, due to since $\phi$ is uniformly bounded in $H^2$ then $\Delta \phi \in L^1([0,T]; L^2)$. This allow us to extend $\phi$ to a smooth function on $[0,T]$, and from the finite speed of propagation property we have that $\phi(T,\cdot)$ is a smooth function with compact support. In consequence we can extend the solution globally in time.

\subsection{ Proof of Theorem \ref{thm: decaimiento 3}: Decay}

We first prove \eqref{H00}. For $t_0>0$ $\sigma <-1$ and $b = 1+\sigma$ in Corollary \ref{coro 2}, which yields
\[
\mathcal{J}(\phi)(t) = \int_0^\infty r^2(1+\tanh(r+\sigma t+(1+\sigma)t_0))\left(\dfrac{\phi_t^2}{2}+\dfrac{\phi_r^2}{2e^{2Ht}}+F(\phi)\right),
\]
and
\[
\dfrac{d\mathcal{J}(\phi)}{dt}(t) \leq (1+\sigma)\int_0^\infty r^2\sech^2(r+\sigma t+(1+\sigma)t_0)\left(\dfrac{\phi_t^2}{2}+\dfrac{\phi_r^2}{2e^{2Ht}}+F(\phi)\right).
\]
Since $\sigma<-1$ we have that $\mathcal{J}(\phi)$ is decreasing on $[2,t_0]$. In particular 
\begin{align*}
& \int_0^\infty r^2(1+\tanh(r+(1+2\sigma)t_0))\left(\dfrac{\phi_t^2}{2}+\dfrac{\phi_r^2}{2e^{2Ht}}+F(\phi)\right)(t_0,r)\\
& \leq \int_0^\infty r^2(1+\tanh(r+2\sigma+(1+\sigma)t_0))\left(\dfrac{\phi_t^2}{2}+\dfrac{\phi_r^2}{2e^{2Ht}}+F(\phi)\right)(2,r) .
\end{align*}
Since $1+\sigma <0$ we have that 
\[
\lim_{t_0 \to \infty}(1+\tanh(r+2\sigma+(1+\sigma)t_0) )= 0.
\]
By dominated convergence theorem we have that the right hand side converges to 0 as $t_0 \to \infty$, and consequently we get 
\[
\lim_{t \to \infty}\int_0^\infty r^2(1+\tanh(r+(1+2\sigma)t))\left(\dfrac{\phi_t^2}{2}+\dfrac{\phi_r^2}{2e^{2Ht}}+F(\phi)\right)(t,r) = 0.
\]
To conclude it is enough to note that if $r >-(1+2\sigma)t$ then 
\[
1+\tanh(r+(1+2\sigma)t) \geq 1,
\]
and then 
\begin{align*}
\int_{R(t)} \left( \dfrac{\phi_t^2}{2}+\dfrac{\phi_r^2}{2e^{2Ht}}+F(\phi) \right)& \leq \int_{-(1+2\sigma)t}^\infty r^2(1+\tanh(r+(1+2\sigma)t))\left(\dfrac{\phi_t^2}{2}+\dfrac{\phi_r^2}{2e^{2Ht}}+F(\phi)\right) \\
& \leq \int_0^\infty r^2(1+\tanh(r+(1+2\sigma)t))\left(\dfrac{\phi_t^2}{2}+\dfrac{\phi_r^2}{2e^{2Ht}}+F(\phi)\right).
\end{align*}
Since we have proved that the right hand side converges to 0 we got the desired decay \eqref{H00}.

\medskip

Now we prove \eqref{H01}. Let $R>0$ fixed. Lemma \ref{lem: virial 2} with $\varphi(t,r)=\varphi(r) \geq 0$ yields for some $C>0$
\[
\begin{aligned}
\dfrac{d\mathcal{J}(\phi)}{dt} =&~{}  -H\int_0^\infty r^2\varphi(r)\left(3\phi_t^2+\dfrac{\phi_r^2}{e^{2Ht}}\right)-\int_0^\infty r^2\varphi'(r)\dfrac{\phi_t\phi_r}{e^{2Ht}}\\
\leq &~{} -3H\int_0^\infty r^2\phi_t^2 \left( \varphi(r) - C  e^{-Ht} \varphi'(r)\right) -H\int_0^\infty r^2\dfrac{\phi_r^2}{e^{2Ht}} \left(\varphi(r) -  \varphi'(r)\right) .
\end{aligned}
\] 
Choosing $\varphi_0(r), \varphi(r)$ satisfying
\[
\varphi_0(r)= 1, \quad r \leq 1, \quad \varphi_0'\leq 0, \quad \varphi_0(r) = e^{-r}, \quad r\geq 2, \quad \varphi(r)= \varphi_0(r/R), 
\]
If $R$ and $t$ are large,
\[
\dfrac{d\mathcal{J}(\phi)}{dt}  \leq -\frac32 H\int_0^\infty r^2\varphi(r)\phi_t^2 - \frac12 H\int_0^\infty r^2\varphi(r)\dfrac{\phi_r^2}{e^{2Ht}}  .
\]
Consequently, for some $t_0>0$ large,
\[
H\int_{t_0}^\infty \int_0^\infty r^2\varphi(r) \left(\phi_t^2+\dfrac{\phi_r^2}{e^{2Ht}} \right) dt <+\infty.
\]
Again, we have now
\[
\dfrac{d\mathcal{J}(\phi)}{dt} \lesssim  H\int_0^\infty r^2\varphi(r)\left(3\phi_t^2+\dfrac{\phi_r^2}{e^{2Ht}}\right),
\]
following the same ideas as in previous proofs, we get the desired result.

\section{Applications}\label{Applications}

In this section we shall see how Theorems \ref{thm: decaimiento} and \ref{thm: decaimiento 2} can be used to study the space-time dynamics of cosmological models. In particular, we shall prove here Corollaries \ref{coro1} and \ref{coro2}.

\subsection{Slow-roll in strongly defocusing models} First of all, we will need the following lemma about the potentials associated to certain models. Recall that from Section \ref{models} we have
\[ \begin{aligned}
F_{1,n}(s) =&~{} (1-e^{-s})^{2n},  \quad n =1,2,3,\ldots\\
F_{2,1}(s) =&~{} \tanh^2(s), \quad s\in\R, \\
F_{6,q}(s) =&~{} \dfrac{1}{q}[(1+s^2)^{q/2}-1 ], \quad   q \in [-1,1], \quad q\neq 0. 
\end{aligned} \]

\begin{lemma}\label{lemma: potenciales}
There exists a constant $C>0$ such that
\[ 
\begin{aligned}
s f_{1,n}(s) \leq & ~{}Cs^4, \quad  s \in [-1,1], \quad n \geq 2, \\
2F_{2,1}(s)-sf_{2,1}(s) \geq &~{} 0, \quad  s \in \R, \\
2F_{6,q}(s)-sf_{6,q}(s) \geq &~{} 0, \quad   q \in [-1,1], s \in \R.
\end{aligned} 
\]
\end{lemma}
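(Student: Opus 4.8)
The plan is to verify each of the three inequalities by direct analysis of the functions $f_{i,n} = F_{i,n}'$ near the relevant points. For the first, with $F_{1,n}(s) = (1-e^{-s})^{2n}$ and $n \geq 2$, I would compute $f_{1,n}(s) = 2n(1-e^{-s})^{2n-1}e^{-s}$, so that $s f_{1,n}(s) = 2n s (1-e^{-s})^{2n-1} e^{-s}$. Near $s=0$ one has $1-e^{-s} = s + O(s^2)$, hence $(1-e^{-s})^{2n-1} = O(s^{2n-1})$ and therefore $s f_{1,n}(s) = O(s^{2n})$, which for $n\geq 2$ is $O(s^4)$. To turn this into a clean bound valid on all of $[-1,1]$ I would argue that $g(s) := s f_{1,n}(s)/s^4$ extends to a continuous function on $[-1,1]$ (since the singularity at $0$ is removable by the Taylor expansion just described), hence is bounded there by compactness; this gives the constant $C$. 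One should also check $sf_{1,n}(s)\ge 0$ on $[-1,1]$, which holds because $s$ and $1-e^{-s}$ have the same sign and $e^{-s}>0$, while $(1-e^{-s})^{2n-2}\ge 0$.

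For the second inequality, with $F_{2,1}(s) = \tanh^2 s$ and $f_{2,1}(s) = 2\tanh s \sech^2 s$, I compute
\[
2F_{2,1}(s) - s f_{2,1}(s) = 2\tanh^2 s - 2 s \tanh s \sech^2 s = 2\tanh s \left( \tanh s - s \sech^2 s \right).
\]
Since $\tanh s$ has the sign of $s$, it suffices to show that $h(s) := \tanh s - s\sech^2 s$ has the sign of $s$ as well, i.e. $h(s)\ge 0$ for $s\ge 0$ (and $h$ is odd). I would do this by noting $h(0)=0$ and computing $h'(s) = \sech^2 s - \sech^2 s + 2s \sech^2 s \tanh s = 2s\sech^2 s \tanh s \ge 0$ for $s\ge 0$; hence $h$ is nondecreasing on $[0,\infty)$ and $h(0)=0$, so $h\ge 0$ there. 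Combined with oddness, $\tanh s \cdot h(s)\ge 0$ for all $s$, giving the claim.

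For the third inequality, with $F_{6,q}(s) = \tfrac1q\big((1+s^2)^{q/2}-1\big)$ I compute $f_{6,q}(s) = \tfrac1q \cdot \tfrac{q}{2}(1+s^2)^{q/2-1}\cdot 2s = s(1+s^2)^{q/2-1}$. Then
\[
2F_{6,q}(s) - s f_{6,q}(s) = \frac{2}{q}\big((1+s^2)^{q/2}-1\big) - s^2 (1+s^2)^{q/2-1}.
\]
Writing $u = 1+s^2 \ge 1$ so $s^2 = u-1$, this becomes $\tfrac{2}{q}(u^{q/2}-1) - (u-1)u^{q/2-1} = \tfrac{2}{q}u^{q/2} - \tfrac2q - u^{q/2} + u^{q/2-1} = (\tfrac2q - 1)u^{q/2} - \tfrac2q + u^{q/2-1}$. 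I would prove this is $\ge 0$ for all $u\ge 1$ and all $q\in[-1,1]\setminus\{0\}$; at $u=1$ it equals $\tfrac2q - 1 - \tfrac2q + 1 = 0$, so it again suffices to show the $u$-derivative is $\ge 0$ for $u\ge 1$. Differentiating gives $(\tfrac2q-1)\tfrac q2 u^{q/2-1} + (\tfrac q2 - 1)u^{q/2-2} = u^{q/2-2}\big[(1-\tfrac q2)u + \tfrac q2 - 1\big] = (1-\tfrac q2) u^{q/2-2}(u-1)$, which is $\ge 0$ since $q\le 1 <2$ and $u\ge 1$. This closes the case; note the sign analysis is uniform in $q$, so no separate treatment of $q<0$ versus $q>0$ is needed despite the $\tfrac1q$ prefactor.

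The main obstacle I anticipate is bookkeeping rather than conceptual: for the first estimate, promoting the local Taylor bound to a uniform constant on $[-1,1]$ requires care that the removable singularity argument is stated correctly, and for the third, the substitution $u = 1+s^2$ and the subsequent derivative sign analysis must be carried out cleanly enough that the factor $1/q$ is seen to cause no trouble near $q=0$ (it cancels, but this should be made explicit). All three reduce to the same template: check the quantity vanishes at the base point ($s=0$ or $u=1$) and that the derivative has the correct sign, so the only real work is the elementary differentiation.
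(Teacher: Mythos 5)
Your proposal is correct, and for the $T$-model and axion-monodromy cases it is essentially the paper's computation: the same factorization $2\tanh s(\tanh s - s\,\mathrm{sech}^2 s)$ for $F_{2,1}$ (your monotonicity argument $h(0)=0$, $h'(s)=2s\,\mathrm{sech}^2 s\tanh s\ge 0$ is in fact more complete than the paper's terse justification via $|\tanh s|\le|s|$), and the same ``vanishes at the base point, derivative has a sign'' scheme for $F_{6,q}$, where the paper differentiates in $s$ and obtains $(2-q)s^3(1+s^2)^{q/2-2}\ge 0$ while you substitute $u=1+s^2$ and differentiate in $u$ — a cosmetic difference, and you correctly note the $1/q$ cancels so both signs of $q$ are handled at once. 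The genuine divergence is in the $E$-model bound: the paper argues explicitly, using $1-e^{-s}\le s$ on $[0,1]$ and, on $[-1,0]$, a comparison function $g_a(s)=s(1-e^{-s})^3-as^4$ whose derivatives vanish to third order at $0$ and whose fourth derivative is made negative by taking $a$ large; you instead observe that $f_{1,n}(s)/s^3$ has a removable singularity at $0$ (since $sf_{1,n}(s)=O(s^{2n})$ with $2n\ge 4$) and invoke continuity on the compact interval $[-1,1]$. Your soft argument is shorter and avoids the fourth-derivative bookkeeping, at the price of a non-explicit constant; both routes, like the paper's, yield a constant depending on $n$ (a uniform-in-$n$ constant is impossible, e.g. at $s=-1$), which is all the application to Theorem \ref{thm: decaimiento 2} requires.
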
 
\begin{proof}
For $F_{2,1}$ by the symmetry of the function is enough to prove the inequality  for $s \geq 0$. We have 
\begin{align*}
2F_{2,1}(s) -sf_{2,1}(s) &= 2\tanh^2(s)-2s\tanh(s)\sech^2(s) \\
&= 2\tanh(s)(\tanh(s)-s\sech^2(s)),
\end{align*}
and since $|\tanh(s)| \leq |s|$ we have the inequality. For $F_{1,n}$ notice that for $s \in [-1,1]$
\[ 
\begin{aligned}
sf_{1,n}(s) =&~{} 2ns(1-e^{-s})^{2n-1}e^{-s} \\
\leq &~{} \left(\sup_{y \in [-1,1]} 2n(1-e^{-y})^{2n-4} \right) s (1-e^{-s})^3 ,
\end{aligned} 
\]
and therefore, from the classic inequality $1+s \leq e^s$ we see that for $s \in [0,1]$ we have 
\[
sf_{1,n}(s) \leq \left(\sup_{y \in [-1,1]} 2n(1-e^{-y})^{2n-4} \right) s^4.
\]
For $s \in [-1,0]$ we define 
\[
g_a(s) = s(1-e^{-s})^3 -as^4,
\]
and we prove that there exists some $a>0$ such that this $g_a(s) \leq 0$ on $[-1,0]$. Indeeed, a straightforward calculation gives us that 
\[
g_a(0) = g_a'(0) = g_a''(0) = g_a'''(0) = 0,
\]
and
\[
g_a^{(iv)}(s) = h(s)-24a,
\]
for some function $h(s)$. We can take $a >0$ as large as we need to guarantee that $g^{(iv)}(s) \leq 0$ on $[-1,0]$, and this implies that $g_a(s) \leq 0$ on $[-1,0]$, giving the desired inequality.

\medskip

For $F_{6,q}$ we have that 
\[
g(s) = 2F_{6,q}(s)-sf_{6,q}(s) = (1+s^2)^{q/2-1}\left[\dfrac{2}{q} +s^2\left(\dfrac{2}{q}-1\right)\right]-\dfrac{2}{q},
\]
and we notice that $g(0) = 0$. By the symmetry of the function it is sufficient to prove that $g'(s) \geq 0$ for $s \geq 0$
\[ 
\begin{aligned}
g'(s) =& f_{6,q}(s)-sf'_{6,q}(s) \\
=& \left(2-q\right)s^3(1+s^2)^{q/2-2},
\end{aligned} 
\]
and since $q \in [-1,1]$ we conclude.
\end{proof}

\begin{remark}
For the $T_n$-models and Axion Monodromy potential $F_{6,q}$ we always have global solutions for any initial data in $H^1\times L^2$ due to Theorem \ref{thm:global}.
\end{remark}

\begin{remark}
The potentials in Lemma \ref{lemma: potenciales} correspond to  the $E$-model, $T$-model and Axion-Monodromy models, respectively.  
\end{remark}

\begin{coro}[Nonexistence of standing solitons]
The scalar field models of finite energy solutions associated to $F_{1,1}$ and $F_{6,q}$, $q\in [-1,1]$, $q\neq 0,$ does not allow for nontrivial stationary solutions.
\end{coro}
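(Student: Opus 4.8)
The plan is to reduce the statement to a single Pohozaev/Nehari–type identity for the stationary profile and then to read off triviality from the pointwise sign of $\phi f(\phi)$ for the two nonlinearities at hand. A nontrivial stationary solution of \eqref{eq:1} with $H=0$ is, after the radial reduction, a radial $\phi\in H^1(\R^3)$ of finite energy solving the elliptic equation $-\Delta\phi+f(\phi)=0$ on $\R^3$. First I would record the structural facts I need: by Lemma \ref{Sob_radial} one has $|\phi(r)|\le C\|\phi\|_{H^1}/r$, so $\phi(r)\to 0$ as $r\to\infty$; and since $f\in C^1$ with $f(0)=0$ one has $|f(s)|\lesssim |s|$ near $s=0$, so, using the decay at infinity together with the finiteness of the energy (which controls $F(\phi)$, hence $\phi$, near the origin), one gets $\phi f(\phi)\in L^1(\R^3)$.

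Next I would establish the identity
\[
\int_{\R^3}|\nabla\phi|^2 \;+\; \int_{\R^3}\phi f(\phi)\;=\;0 .
\]
This is simply the $H=0$, time–independent specialization of the virial identity \eqref{dRdt}: since $\phi_t\equiv 0$, the functional $\mathcal R(\phi)$ is constantly zero, hence $\tfrac{d}{dt}\mathcal R(\phi)\equiv 0$, and with the choice $\psi(r)=r^3/3$ all the lower–order weight coefficients in \eqref{dRdt} cancel (one has $\tfrac{\psi'}{r^2}-\tfrac{\psi''}{r}+\tfrac{\psi'''}{2}=1-2+1=0$), leaving exactly $0=\int_0^\infty r^2(\phi_r^2+\phi f(\phi))\,dr$. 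Equivalently one just tests the (weak form of the) elliptic equation against $\phi$ itself; the integration by parts, together with the vanishing of all boundary contributions at $r=0$ and $r=\infty$, is justified through the approximation argument of Lemma \ref{lemma: density}, exactly as in the proof of Lemma \ref{lem:virial1}.

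Finally I would check the sign of $\phi f(\phi)$ for the two models. For $F_{1,1}$ one has $\phi f_{1,1}(\phi)=2e^{-\phi}\,\phi\,(1-e^{-\phi})$, and $\phi$ and $1-e^{-\phi}$ always carry the same sign, so $\phi f_{1,1}(\phi)\ge 0$, with equality only at $\phi=0$. For $F_{6,q}$ a direct computation gives $\phi f_{6,q}(\phi)=\phi^2\,(1+\phi^2)^{q/2-1}\ge 0$ for every $q\in[-1,1]$, $q\neq 0$. In either case the two nonnegative summands in the displayed identity must vanish separately, so $\nabla\phi\equiv 0$; since $\phi$ is then constant and $\phi(r)\to 0$, we conclude $\phi\equiv 0$, which is the desired nonexistence.

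The only genuinely delicate point — and the one step I expect to require real care — is the justification of the integration by parts for a finite–energy radial solution that need not be bounded near the origin, in particular the control of $f(\phi)$ there for the exponentially growing potential $F_{1,1}$; this is precisely where the density Lemma \ref{lemma: density} (and the energy bound on $F(\phi)$) is invoked, so that no boundary term survives. For $F_{6,q}$ one may alternatively bypass all of this: $f_{6,q}$ is globally Lipschitz and satisfies the sign hypotheses of Theorem \ref{thm: decaimiento} by Lemma \ref{lemma: potenciales}, so Corollary \ref{coro3} applies verbatim; but $f_{1,1}$ is not globally Lipschitz, which is why the elliptic identity above is the uniform route covering both nonlinearities.
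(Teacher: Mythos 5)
Your argument is correct, but it follows a genuinely different route from the paper's. The paper disposes of the corollary in two lines by invoking the Pohozaev identity from Willem's book: any finite-energy solution of $-\Delta\phi+f(\phi)=0$ with $F(\phi)\in L^1$ satisfies $\tfrac{n-2}{2}\int|\nabla\phi|^2+n\int F(\phi)=0$, and since $F_{1,1},F_{6,q}\ge 0$ both terms must vanish. You instead derive the Nehari-type identity $\int|\nabla\phi|^2+\int\phi f(\phi)=0$ (equivalently, the paper's \eqref{dRdt} with $\psi=r^3/3$, or just testing the equation against $\phi$) and use the pointwise sign $\phi f(\phi)\ge 0$, which you verify correctly for both nonlinearities. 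The two identities come from different multipliers ($\phi$ versus $x\cdot\nabla\phi$) and rest on genuinely different sign hypotheses ($\phi f(\phi)\ge 0$ versus $F\ge 0$); for the two models at hand both hold, so both proofs close. Your route has the advantage of being self-contained within the paper's own virial machinery and of not requiring the growth hypotheses under which Pohozaev's identity is usually proved; the paper's route is shorter and needs only the sign of $F$, which is the hypothesis already present in Theorem \ref{thm: decaimiento}. The integrability issue you flag for $F_{1,1}$ (justifying $\phi f_{1,1}(\phi)\in L^1$ and the vanishing of boundary terms when $\phi\in H^1(\R^3)$ may be unbounded near the origin while $f_{1,1}$ grows exponentially) is real, but note that it is not resolved in the paper either: applying the Pohozaev identity of \cite{Willem} to an exponentially unbounded nonlinearity requires exactly the same kind of care, so your proof is at least as rigorous as the published one. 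A clean way to finesse it in your setting is to observe that $\phi f(\phi)\ge 0$ makes $\int\phi f(\phi)$ well defined in $[0,+\infty]$, so truncating $f$ and passing to the limit by monotone convergence yields the identity with no integrability hypothesis, after which $\nabla\phi\equiv 0$ and the decay at infinity force $\phi\equiv 0$.
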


\begin{proof}
This is a standard result. First note that from Pohozaev's indentity \cite{Willem}, every solution $\phi \in H^1(\R^n)$ to 
\[
-\Delta \phi +f(\phi) = 0,
\]
such that $F(\phi) \in L^1(\R^n)$ must satisfy
\[
\dfrac{n-2}{2}\int|\nabla \phi|^2+n\int F(\phi) = 0,
\]
and consequently it does not exist nontrivial stationary solutions of finite energy precisely when potential is positive. 
\end{proof}

\subsection{The general $T$ model}
For the $T$-model, when $n = 1$ we already know from Lemma \ref{lemma: potenciales} and Theorem \ref{thm: decaimiento} that every solution to \eqref{eq:1} with $H = 0$ satisfies \eqref{decaimiento}. For $n \geq 2$ the situation is different.

\begin{lemma}
We have $s f_{2,n}(s) \leq 2n s^4$, $s\in\R.$
\end{lemma}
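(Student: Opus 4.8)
The plan is to prove the pointwise bound $s f_{2,n}(s) \leq 2n s^4$ for all $s \in \R$, where $F_{2,n}(s) = \tanh^{2n}(s)$ and hence $f_{2,n}(s) = F_{2,n}'(s) = 2n\tanh^{2n-1}(s)\sech^2(s)$. First I would compute explicitly
\[
s f_{2,n}(s) = 2n\, s\,\tanh^{2n-1}(s)\,\sech^2(s).
\]
By the evenness of $s f_{2,n}(s)$ (the map $s \mapsto s f_{2,n}(s)$ is even, since $f_{2,n}$ is odd), it suffices to treat $s \geq 0$, where everything in sight is nonnegative.

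The key step is then to bound $\tanh^{2n-1}(s)\sech^2(s)$ by $s^3$ for $s \geq 0$. I would split off two factors of $\tanh(s)$ and use the elementary inequality $0 \leq \tanh(s) \leq s$ for $s \geq 0$ (already used for $F_{2,1}$ in the proof of Lemma \ref{lemma: potenciales}), giving $\tanh^2(s) \leq s^2$. For the remaining factor $\tanh^{2n-3}(s)\sech^2(s)$ I would observe that $\sech^2(s) \leq 1$ and $0 \leq \tanh(s) \leq 1$, so $\tanh^{2n-3}(s)\sech^2(s) \leq 1$ when $2n-3 \geq 0$, i.e. $n \geq 2$. Multiplying, $\tanh^{2n-1}(s)\sech^2(s) \leq s^2$, and then one more factor $s\tanh(s) \leq s^2$ would overshoot; instead I would be more careful: write $s f_{2,n}(s) = 2n\,(s\tanh(s))\,\tanh^{2n-2}(s)\sech^2(s)$, bound $s\tanh(s) \leq s^2$, and bound $\tanh^{2n-2}(s)\sech^2(s) \leq \tanh^2(s)\sech^2(s) \leq s^2$ (valid for $n \geq 2$ using $\tanh \leq 1$ on the excess powers), obtaining $s f_{2,n}(s) \leq 2n s^4$.

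The only genuinely delicate point is the case $n = 1$, where $2n-2 = 0$ and the factorization above reads $s f_{2,1}(s) = 2\,(s\tanh(s))\,\sech^2(s)$; here I would bound $s\tanh(s) \leq s^2$ and, since near the origin $\sech^2(s) \to 1$, I cannot gain a full extra $s^2$ — but I can still gain $\tanh^2(s) \leq s^2$ by instead writing $\sech^2(s) = 1 - \tanh^2(s) \leq 1$ and noting $s f_{2,1}(s) = 2s\tanh(s)\sech^2(s) \leq 2 s^2 \cdot 1$, which is not $\leq 2s^4$ near infinity. Rechecking: for large $s$, $s f_{2,1}(s) \sim 2s\sech^2(s) \to 0$ while $2s^4 \to \infty$, so the bound holds for large $s$; near $s=0$, $s f_{2,1}(s) = 2s^2 + O(s^4)$ whereas $2s^4 = o(s^2)$, so the claimed inequality $s f_{2,1}(s) \leq 2 s^4$ actually \emph{fails} near the origin. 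Hence I expect the statement is intended for $n \geq 2$ (consistent with Corollary \ref{coro2} and the role of \eqref{crecimiento}), and the main obstacle is simply to present the factorization $s f_{2,n}(s) = 2n\,(s\tanh s)\,(\tanh^{2n-2}s)\,\sech^2 s$ and apply $s\tanh s \leq s^2$, $\tanh^{2n-4}s \leq 1$, $\tanh^2 s \leq s^2$, $\sech^2 s \leq 1$ to conclude $s f_{2,n}(s) \leq 2n s^4$ for $s \in \R$, $n \geq 2$; for $n=1$ one only has $s f_{2,1}(s) \leq 2s^2$ globally, which is why the $T_1$ model is handled separately via Theorem \ref{thm: decaimiento}.
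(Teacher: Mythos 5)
Your proof is correct and takes essentially the same route as the paper's, which factors $s f_{2,n}(s) = 2n\, s\,\tanh^3(s)\,\tanh^{2n-4}(s)\sech^2(s)$ and applies $|\tanh s|\leq |s|$ together with $\tanh^{2n-4}(s),\ \sech^2(s)\leq 1$, an argument that (like yours) implicitly requires $n\geq 2$. Your observation that the stated inequality fails near the origin for $n=1$ is accurate and worth noting; the surrounding text confirms the lemma is only invoked for $n\geq 2$, the $T_1$ model being covered instead by Lemma \ref{lemma: potenciales} and Theorem \ref{thm: decaimiento}.
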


\begin{proof}
We have
\begin{align*}
s f_{2,n}(s) &= 2ns\tanh^{2n-1}(u)\sech^2(s) \\
&= 2n s\tanh^3(s)\tanh^{2n-4}(s)\sech^2(s)  \leq 2n s^4.
\end{align*}
\end{proof}
Hence we can apply Theorem \ref{thm: decaimiento 2} to conclude that the same decay occurs as long as $\|\phi(t)\|_{H^1 \cap L^\infty}$ is small enough. This shows Corollary \ref{coro2} in the case of the $F_{2,n}$ model. 

\begin{remark}
Notice that from Theorem \ref{thm:global} if we set initial data in $H^2\times H^1$ we have a unique global solution to \eqref{eq:1} with $(\phi,\phi_t) \in H^2 \times H^1$ for every time.
\end{remark}

\begin{remark}
For the E-model we can apply again Theorem \ref{thm: decaimiento 2} to conclude the decay of small enough solutions to \eqref{eq:1} when $n \geq 2$. This is the missing part of Corollary \ref{coro2}. In this case it has not been possible to ensure that solutions are global in time for any initial data. However, Theorem \ref{thm: Global 2} assures the existence of global solutions when initial data is regular and small enough. 
\end{remark}

When $n = 1$ we cannot apply neither Theorem \ref{thm: decaimiento} nor \ref{thm: decaimiento 2} for $F_{1,1}$ because in this case we have no longer the required sign condition and $f_{1,1}'$ is not well behaved near the origin.


\subsection{The non-minimal coupling and Hilltop models}   For the non-minimal coupling model \[
F(\phi) = \lambda^2 \phi^4+\beta^2\phi^2
\]we give a slightly modified argument to the presented in Theorem \ref{thm: decaimiento 2} to conclude the decay of small enough solutions. Since $2F(\phi)-\phi f(\phi) = -\lambda^2 \phi^4$ we have in \eqref{dIdt bonito} that \[
\dfrac{d\mathcal{I}(\phi)}{dt} = \int_0^\infty r^2\left(\dfrac{1}{(1+r)^2}\phi_r^2+\dfrac{r+4}{2r(1+r)^4}\phi^2 \right)-\lambda^2\dfrac{r(r+2)}{2(1+r)^2}\phi^4 
\]
Supposing that $\sup_{t \geq 0}\|\phi(t)\|_{H^1\cap L^\infty} <\varepsilon$ and using that 
\[
\phi(r)^2 \leq \dfrac{(1+C)\varepsilon^2}{(1+r)^2},
\]
we have 
\[ 
\begin{aligned}
\dfrac{d\mathcal{I}(\phi)}{dt} =&~{} \int_0^\infty r^2\left(\dfrac{1}{(1+r)^2}\phi_r^2+\dfrac{r+4}{2r(1+r)^4}\phi^2 \right)-(1+C)\lambda^2\varepsilon^2\dfrac{r(r+2)}{2(1+r)^4}\phi^2 \\
 \geq &~{}  \int_0^\infty \left(\dfrac{r^2}{(1+r)^2}\phi_r^2+\dfrac{r(r+2)}{2(1+r)^4}\phi^2(1-(1+C)\lambda^2 \varepsilon^2) \right) \\
 \geq &~{} \|\phi\|_{H_w^1}^2,
\end{aligned}
\]
provided that $\varepsilon$ is small enough. Thus, we can conclude as in Theorem \ref{thm: decaimiento 2}. Notice that the hypothesis \eqref{acota} is also needed to prove that \[
\int_0^\infty \|\phi_t\|_{L^2_w}dt < \infty
\]as in Proposition \ref{prop 2}. For the Hilltop model with $n = 2$ we have a simpler situation. We have that $2F(\phi)-\phi f(\phi) = 2\phi^4$ and therefore 
\begin{align*}
\dfrac{d\mathcal{I}(\phi)}{dt} =& \int_0^\infty r^2\left(\dfrac{1}{(1+r)^2}\phi_r^2+\dfrac{r+4}{2r(1+r)^4}\phi^2 \right)+2\dfrac{r(r+2)}{2(1+r)^2}\phi^4 \\
\geq & \int_0^\infty r^2\left(\dfrac{1}{(1+r)^2}\phi_r^2+\dfrac{r+4}{2r(1+r)^4}\phi^2 \right) \\
\geq & \|\phi\|_{H^1_w}.
\end{align*}
Supposing \eqref{acota} we can conclude as in Theorem \ref{thm: decaimiento 2}.

\subsection{The Natural Inflation and Axion potentials} Assume small data $\phi$. Observe that in this case $F_{3,\pm} \geq 0$ in \eqref{F3}, and
\[
2F_{3,-} (\phi)- \phi f_{3,-}(\phi) = \frac1{12}\phi^4 + O(\phi^6).
\]
In the other case, we must take out the nonzero value of the potential at infinity to get finite energy, considering $F_{3,+} (\phi)= \cos\phi -1$. We get
\[
2F_{3,+} (\phi)- \phi f_{3,+}(\phi) = -\frac1{12}\phi^4 + O(\phi^6).
\]
In the former case, natural inflation, by virtue of \eqref{dIdt bonito} we get the desired result applying the same ideas as in the previous subsection. The case of Axion potential remains an interesting open problem.

\subsection{The D-brane model} Finally for the D-brane model $F_{4,n}$ in \eqref{F4}, since the potential and his derivative are singular in the origin we look for solutions of the form $\phi = 1+v$, where we suppose that $v(t) \in H^2$ and 
\[
\|v(t)\|_{H^2} < 1,
\]
for all times. Notice that this bound ensures control of the $L^\infty$ norm of $v$. In this case we have that the function $v$ satisfies (see \eqref{tF4})
\[
\partial_t^2v-\partial_r^2 v-\dfrac{2}{r}\partial_r v+2n \left( \dfrac{1}{(1+v)^{2n+1}} -1 \right)= 0.
\]
We conclude that 
\[
2 \tilde F_{4,n}(v) - v \tilde f_{4,n}(v)= \begin{cases} \frac{-2v^3(v+2)}{(1+v)^3}& n=1\\   -\frac{2 v^3 (10 + 15 v + 9 v^2 + 2 v^3)}{(1 + v)^5} & n=2. \end{cases}
\]
Consequently, Theorems \ref{thm: decaimiento} and \ref{thm: decaimiento 2} remain inconclusive in this setting. 

\subsection{The potential $F_7$} Now we consider the logarithm potential present in Corollary \ref{coro1}. Recall that 
\[
F_7(s) = \frac12\log(1+ s^2).
\]
Its derivative is clearly Lipschitz and from the well-known inequality (valid for $x>0$)
\[
1-\dfrac{1}{x} \leq \log(x),
\]
we have that 
\[
2F_7(\phi)-\phi f_7(\phi) = \log(1+\phi^2)-\dfrac{\phi^2}{1+\phi^2} \geq 0,
\]
and in consequence for any initial data in $H^1\times L^2$ the solution is global in time and satisfies \eqref{decaimiento}.

\appendix

\section{Proof of local and global existence}\label{A}

In this section, we sketch the proof of Theorem \ref{thm:global} and \ref{thm:local}. This proof is essentially taken from Evans \cite{Evans}.

\begin{proof}[Proof of Theorem \ref{thm:global}]
It is well known that for $H \geq 0$, $h \in L^1([0,T]; H^s)$ and $(f,g) \in H^{s+1}\times H^s$ the linear wave equation 
\[ \begin{aligned} 
\partial_t^2 \phi +3H\partial_t \phi -\dfrac{\Delta \phi}{e^{2Ht}}=&~{} h \\
(\phi(0), \phi_t(0)) =&~{} (f,g),  
\end{aligned} \]
has a unique solution $\phi \in C([0,T]; H^{s+1})\cap C^1([0,T], H^s)$ such that 
\begin{equation} \label{linear estimate}
\|\phi(t)\|_{H^{s+1}}+\|\phi_t(t)\|_{H^s} \leq C(1+T)\left(\|f\|_{H^{s+1}}+\|g\|_{H^s}+\int_0^t\|h(r)\|_{H^s}dr\right).
\end{equation}
See for example \cite{Sogge}. Using this, consider the Banach space 
\[
X = C([0,T]; H^1)\cap C^1([0,T], L^2),
\]
with norm 
\[
\|\phi\|_X = \sup_{t \in [0,T]}(\|\phi(t)\|_{H^1}+\|\phi_t\|_{L^2}),
\]
for some $T >0$ to be chosen later. We define the operator $A :X \to X$ by $Av = \phi$, where $\phi$ is the unique solution to 
\[ \begin{aligned}
\partial_t^2 \phi +3H\partial_t \phi -\dfrac{\Delta \phi}{a}=& f(v), \\
(\phi(0),\phi_t(0)) =& (h,g).
\end{aligned} \]
Note that $A$ is well defined because, since $f$ is globally Lipschitz and $f(0) = 0$, we have that $|f(v)| \leq M|v|$ and hence $f(v) \in L^1([0,T], L^2)$. Thus, given $v_1,v_2 \in X$ we have that $w = Av_1-A_v2$ satisfies 
\[ \begin{aligned}
\partial_t^2 w+3H\partial_t w-\dfrac{\Delta w}{a} =&~{} f(v_1)-f(v_2) \\
(w(0),w_t(0)) =&~{} (0,0)
\end{aligned} \] 
and so we can use the estimate \eqref{linear estimate}, obtaining 
\[ \begin{aligned}
\|w(t)\|_{H^1}+\|w_t(t)\|_{L^2} \leq&~{} C(1+T)\int_0^t\|f(v_1(s))-f(v_2(s))\|_{L^2}ds \\
\leq &~{} CM(1+T)t\sup_{s \in [0,T]}\|v_1(s)-v_2(s)\|_{L^2},
\end{aligned} \]
where $M >0$ is the Lipschitz constant of $f$. Thus we have that 
\[
\|w\|_X = \|Av_1-Av_2\|_X \leq CM(1+T)T\|v_1-v_2\|_X,
\]
and taking $T$ small enough such that $CM(1+T)T <1$ by Banach's fixed point theorem we have that there exist $\phi \in X$ such that 
\[ 
\begin{aligned}
\partial_t^2 \phi +3H\partial_t \phi-\dfrac{\Delta \phi}{e^{2Ht}} =&~{} f(\phi), \\
(\phi(0),\phi_t(0)) =&~{} (h,g).
\end{aligned} 
\]
Since $T$ does not depend on $h,g$ we can extend this solution for all times, concluding the proof.
\end{proof}
\begin{proof}[Proof of Theorem \ref{thm:local}]

Proceeding as above, consider the Banach space 
\[
X = C([0,T]; H^2) \cap C^1([0,T]; H^1),
\]
with norm 
\[
\|\phi\|_{X} = \sup_{t \in [0,T]}(\|\phi(t)\|_{H^2}+\|\phi_t(t)\|_{H^1}),
\]
and consider the subset $Y = \{\phi \in X \;| \; \|\phi\|_X \leq R\}$ with the metric induced by the norm of $X$. Define the operator $A: Y \to X$ by $Av = \phi$, where $\phi$ is the unique solution to 
\[ \begin{aligned}
\partial_t^2 \phi+3H\partial_t \phi -\dfrac{\Delta \phi}{e^{2Ht}} =&~{} f(v) \\
(\phi(0),\phi_t(0)) =&~{} (h,g),
\end{aligned} \]
where $(h,g) \in H^2\times H^1(\R^3)$. In order to see that $f(v) \in H^1$ notice that from Sobolev's embedding we have that $H^2(\R^3) \hookrightarrow C(\R^3) \cap L^\infty(\R^3)$, and then $|v(t,x)| \leq M$ for all $(t,x) \in [0,T]\times \R^3$. Note that this constant is independent of $v$, because $\|v\|_X \leq R $. Since $f$ is of class $C^2$ (and in particular locally Lipschitz continuous) we have 
\[
|f(v)| \lesssim |v|,
\]
and $f(v) \in L^2$. The same argument allows us to prove that
\[
|\nabla f(v)| = |f'(v)\nabla v| \lesssim |\nabla v|,
\]
and thus $f(v) \in L^1([0,T]; H^1)$ and $A$ is well defined. Using (\ref{linear estimate}) for $s = 1$ we see that
\[
\begin{aligned}
\|w\|_X \leq&~{} C(1+T)\left(\|h\|_{H^2}+\|g\|_{H^1}+\int_0^T\|f(v(s))\|_{H^1}ds \right) \\
\leq&~{} C(1+T)\left(\|h\|_{H^2}+\|g\|_{H^1}+TM\|v\|_X \right),
\end{aligned}
\]
where $M>0$ depends only on $f$ and $R$. Thus, taking $R \geq C(1+T)\|(h,g)\|_{H^2\times H^1}$ and $T>0$ small enough we have that $\|w\|_X \leq R$. Now, for $v_1,v_2 \in Y$ we have that $w = Av_1-Av_2$ satisfies
\begin{align*}
\partial_t^2 w-\Delta w =&~{} f(v_1)-f(v_2) \\
(w(0),w_t(0)) =&~{} (0,0),
\end{align*} 
and using (\ref{linear estimate}) again we have that 
\begin{align*}
\|w(t)\|_{H^2}+\|w_t(t)\|_{H^1} \leq&~{} C(1+t)\int_0^t\|f(v_1(s))-f(v_2(s))\|_{H^1}ds \\
\leq &~{} C(1+t)\int_0^t(\|f(v_1)-f(v_2)\|_{L^2}+\|f'(v_1)\nabla v_1-f'(v_2)\nabla v_2\|_{L^2})ds.
\end{align*}
Since both $f$ and $f'$ are locally Lipschitz continuous we have that 
\[
\|f(v_1)-f(v_2)\|_{L^2} \lesssim \|v_1-v_2\|_{L^2},
\]
and 
\begin{align*}
\|f'(v_1)\nabla v_1-f'(v_2)\nabla v_2\|_{L^2} \leq&~{} \|f'(v_1)\nabla v_1-f'(v_1)\nabla v_2\|_{L^2}+\|(f'(v_1)-f'(v_2))\nabla v_2\|_{L^2} \\
\lesssim &~{} \|\nabla v_1-\nabla v_2\|_{L^2}+\|v_1-v_2\|_{L^2}.
\end{align*}
This implies that
\[
\|w\|_{X} \leq CN(1+T)T\|v_1-v_2\|_{X}.
\]
and taking $T$ small enough we have that $A: Y \to Y$ is a contraction, and we conclude as in the proof of Theorem \ref{thm:global}. By standard arguments this solution can be extended to a maximal interval $[0,T)$.
\end{proof}






\end{document}